\definecolor{linkblue}{HTML}{003d73}
\definecolor{linkgreen}{HTML}{006161}
\definecolor{linkred}{HTML}{a11950}
\crefname{thm}{theorem}{theorems}
\crefname{prop}{proposition}{propositions}
\newtheorem{thm}{Theorem}[section]
\newtheorem*{thm*}{Theorem}
\newtheorem{prop}[thm]{Proposition}
\newtheorem{lem}[thm]{Lemma}
\newtheorem{cor}[thm]{Corollary}
\theoremstyle{definition}
\newtheorem{definition}[thm]{Definition}
\newtheorem{example}[thm]{Example}
\newtheorem{remark}[thm]{Remark}
\newcommand{\R}{\mathbb{R}}
\newcommand{\C}{\mathbb{C}}
\newcommand{\Gr}{\mathrm{Gr}_d(\C^N)}
\newcommand{\Fl}{\operatorname{F\ell}}
\newcommand{\frames}{\mathcal{F}^{d,N}}
\newcommand{\Id}{\mathbb{I}}
\newcommand{\tr}{\operatorname{tr}}
\newcommand{\FUNTF}{\frames_{\left(\frac{N}{d},\dots , \frac{N}{d}\right)}(1,\dots , 1)}
\newcommand{\unitary}{\operatorname{U}(d)}
\newcommand{\hermitian}{\mathcal{H}}
\newcommand{\torus}{\operatorname{U}(1)}
\newcommand{\zu}{Z(\operatorname{U}(N))}
\newcommand{\bt}{\mathbb{T}}
\newcommand{\btt}{\boldsymbol{t}}
\newcommand{\polytope}{\mathcal{P}^{d,N}}
\newcommand{\Ad}{\operatorname{Ad}}
\renewcommand{\Im}{\operatorname{Im}}
\newcommand{\br}{\boldsymbol{r}}
\newcommand{\blam}{\boldsymbol{\lambda}}
\definecolor{darkblue}{rgb}{0.0, 0.0, 0.8}
\definecolor{darkred}{rgb}{0.8, 0.0, 0.0}
\definecolor{darkgreen}{rgb}{0.0, 0.8, 0.0}
\title{Toric Symplectic Geometry and Full Spark Frames}
\author[$\ast$]{Tom Needham}
\author[$\dag$]{Clayton Shonkwiler}
\affil[$\ast$]{Department of Mathematics, Florida State University, Tallahassee, FL} 
\affil[$\dag$]{Department of Mathematics, Colorado State University, Fort Collins, CO}
\date{}
\begin{document}

\maketitle

\begin{abstract}
    The collection of $d \times N$ complex matrices with prescribed column norms and prescribed (nonzero) singular values forms a compact algebraic variety, which we refer to as a \emph{frame space}. Elements of frame spaces---i.e., \emph{frames}---are used to give robust representations of complex-valued signals, so that geometrical and measure-theoretic properties of frame spaces are of interest to the signal processing community. This paper is concerned with the following question: what is the probability that a frame drawn uniformly at random from a given frame space has the property that any subset of $d$ of its columns gives a basis for $\mathbb{C}^d$? We show that the probability is one, generalizing recent work of Cahill, Mixon, and Strawn. To prove this, we first show that frame spaces are related to highly structured objects called toric symplectic manifolds. This relationship elucidates the geometric meaning of \emph{eigensteps}---certain spectral invariants of a  frame---and should be a more broadly applicable tool for studying probabilistic questions about the structure of frame spaces. As another application of our symplectic perspective, we completely characterize the norm and spectral data for which the corresponding frame space has singularities, answering some open questions in the frame theory literature.
\end{abstract}

\section{Introduction}\label{sec:intro}
A \textit{frame} in a Hilbert space $(\mathcal{H},\langle\cdot,\cdot\rangle)$ is traditionally defined as a collection $\{f_i\}_{i \in \mathcal{I}}$ of vectors in $\mathcal{H}$ so that for all $v \in \mathcal{H}$ we have
\begin{equation}\label{eq:frame inequality}
    a \|v\|^2 \leq \sum_{i \in \mathcal{I}} \left| \langle v, f_i \rangle \right|^2 \leq b \|v\|^2
\end{equation}
for some numbers $0 < a \leq b$ called \textit{frame bounds}. In this paper, we focus on finite frames in complex Hilbert spaces, in which case $\mathcal{H} = \C^d$ for some integer $d >0$ with its standard Hermitian inner product and norm, $\mathcal{I} = \{1, \dots , N\}$ is finite, and the above condition is equivalent to the collection $\{f_1, \dots, f_N\}$ being a spanning set for $\C^d$. 

Interest in finite frames is largely due to their application to robust signal representation. Modeling a signal as an element of a Hilbert space $\mathcal{H}$, a frame allows one to take a sequence of ``measurements" by recording the inner product of the signal with each of the frame vectors. This signal representation is more robust to noise in the signal or random erasures of measurements than a similar measurement scheme associated to an orthonormal basis, at least when the frame has certain properties \cite{Goyal:2001cd,Casazza:2003vp,holmes2004optimal}. These desirable properties for a frame are typically expressed as prescriptions for the norms of the frame vectors and for the spectrum of an operator associated to the frame, which we describe in more detail below. The collection of frames with prescribed norm and spectral data is easily seen to define an algebraic variety, and there is interest in the geometric structure of these frame varieties \cite{Dykema:2006ux,Strawn:2010bn,Cahill:2017gv}. This paper explores the geometry of frame varieties through the lens of \emph{symplectic geometry}. Symplectic geometry is a subfield of differential geometry that studies manifolds endowed with a certain geometric structure---to keep the paper accessible to the broader frame theory community, we have aimed to give self-contained expositions of the relevant ideas from symplectic geometry, but we will not go into details here in the introduction. 

The main contributions of this paper are as follows:
\begin{itemize}
    \item Our main theorem, \Cref{thm:main}, says that with probability 1 every size-$d$ subset of a random frame in $\C^d$ with prescribed norms and spectral data is a basis. Frames satisfying this non-degeneracy condition are called \textit{full spark frames}. This generalizes the complex case of Theorem~1.6 from Cahill, Mixon, and Strawn's paper~\cite{Cahill:2017gv}, which established the genericity of the full spark condition for frames whose frame bounds are equal (this can be stated as a spectral constraint) and whose frame vectors are all unit---this is referred to as the space of \emph{finite unit-norm tight frames} or \emph{FUNTFs}.
    \item We show in \Cref{thm:frame space manifold eigenvalue characterization} that the space of frames with prescribed norm and spectral data is a smooth manifold, for generic choices of this data. In fact, we give necessary and sufficient conditions on the norms and eigenvalues which guarantee that the corresponding frame variety is smooth, and we describe the local geometry of singular varieties near their singular points. This theorem (together with surrounding results) generalizes work of Dykema and Strawn~\cite{Dykema:2006ux}, which once again specializes to the space of FUNTFs, and answers generalizations of two open questions posed in~\cite{Cahill:2017gv}.
    \item Both of the previous two results are proved by novel applications of ideas from symplectic geometry to frame theory. Throughout the course of the paper, we show that many spaces of complex frames have natural interpretations from the symplectic point of view. In particular, we show in \Cref{thm:toric_symplectic} that each space of frames with prescribed spectral and norm data has a dense open subset which projects onto a highly structured geometric object called a \emph{toric symplectic manifold}. This geometric structure has measure-theoretic implications---we use it to prove our full spark theorem, but expect that it will be a useful tool for the future study of probability theory on frame spaces. \Cref{thm:toric_symplectic} generalizes work of Flaschka and Millson~\cite{Flaschka:2005dq}, which is written in the context of pure symplectic geometry and makes no references to frames. The theorem also gives a symplectic interpretation of the frame-theoretic concept of \emph{eigensteps}, introduced by Cahill et al.~\cite{Cahill:2013jv}, and an auxiliary result used in its proof generalizes a theorem of Haga and Pegel~\cite{Haga:2016jva}.
\end{itemize}
To describe our results in detail, we now  introduce more precise terminology and notation.

\paragraph{Notation and statement of the main theorem.}

Let $\frames$ be the space of frames of $N$ vectors in $\C^d$. Identifying a frame $\{f_i\}_{i=1}^N \in \frames$ with the $d \times N$ matrix whose columns are the $f_i$ represented in the standard basis, the space $\frames$ can be viewed as an open, dense subset of the space $\C^{d \times N}$ of $d \times N$ complex matrices.

When $a=b$ in~\eqref{eq:frame inequality}, the frame satisfies a scaled Parseval identity
\[
    \sum_{i=1}^N \left|\langle v, f_i\rangle \right|^2 = a \|v\|^2,
\]
and such frames are called $a$-\textit{tight} (or just \textit{tight}); 1-tight frames are called \textit{Parseval frames}.

Each frame $\{f_i\}_{i=1}^N \in \frames$ has three related operators, which can be interpreted in terms of the $d \times N$ matrix $F = \begin{bmatrix} f_1 \mid f_2 \mid \dots \mid f_N\end{bmatrix}$:
\begin{enumerate}
    \item The \textit{analysis operator} $\C^d \to \C^N$ is defined by
    \begin{equation*}
        v \mapsto \left(\langle v, f_1\rangle , \dots , \langle v, f_N \rangle \right),
    \end{equation*}
    or equivalently $v \mapsto F^* v$, where $F^*$ is the Hermitian adjoint (i.e., conjugate transpose) of $F$;
    \item The \textit{synthesis operator} $\C^N \to \C^d$ is defined by
    \begin{equation*}
        (w_1, \dots , w_N) \mapsto \sum_{i=1}^N w_i f_i,
    \end{equation*}
    or equivalently $w \mapsto F w$;
    \item The \textit{frame operator} $\C^d \to \C^d$ is the composition of the analysis and synthesis operators; i.e., $v \mapsto FF^* v$.
\end{enumerate}

A simple calculation shows that a frame is $a$-tight if and only if its frame operator is $a \Id_d$, where $\Id_d$ is the identity map on $\C^d$. 

Frame operators are always Hermitian and positive-definite, so they have spectrum ${\lambda_1 \geq \dots \geq \lambda_d > 0}$, which we will call the \textit{frame spectrum}. If $\blam = (\lambda_1, \dots , \lambda_d)$, we will use $\frames_{\blam}$ to indicate the frames with frame spectrum $\blam$. Notice that the $a$-tight frames are uniquely specified by their frame spectra: $\frames_{(a,\dots , a)}$ is the space of all $a$-tight frames.

In addition to specifying a frame operator, we also often want to fix the (squared) norms of the individual frame vectors. We can always permute the labels on the frame vectors, so it will be convenient in what follows to assume the norms are sorted in decreasing order. If $\br = (r_1, \dots , r_N)$ is a non-increasing list of positive numbers $r_1 \geq  \dots \geq r_N > 0$,\footnote{We could also allow some of the $r_i$ to be zero, but this would complicate some statements below to no apparent benefit.} we will use $\frames(\br)$ to indicate the space of frames with $\|f_i\|^2 = r_i$. It is especially common to require that all the frame vectors have the same norm: $\|f_i\|^2 = r>0$ for all $i$, in which case the frame is an \textit{equal-norm frame}; if $r=1$, this is a \textit{unit-norm frame}. 

In general, the frame norms determine the trace of the frame operator:
\begin{equation}\label{eq:norm trace}
    \sum_{i=1}^N r_i = \sum_{i=1}^N \|f_i\|^2 = \tr F^*F = \tr FF^* = \sum_{i=1}^d \lambda_i,
\end{equation}
by the cyclic invariance of trace. Hence, for tight frames the frame operator must be $\frac{\|\br\|^2}{d}\Id_d$ and a unit-norm tight frame must have $\lambda_i = \frac{N}{d}$ for all $i$. If $\br = (r_1, \dots , r_N)$ and $\blam = (\lambda_1, \dots , \lambda_d)$ with $r_1 \geq \dots \geq r_N > 0$ and $\lambda_1 \geq \dots \geq \lambda_d > 0$, we will use $\frames_{\blam}(\br) := \frames_{\blam} \cap \frames(\br)$ to denote the space of frames $\{f_i\}_{i=1}^N$ with $\|f_i\|^2 = r_i$ and frame spectrum $\blam$. This space has a natural probability measure: the (normalized) Hausdorff measure it inherits as a compact subset of $\C^{d \times N}$.

Equation~\eqref{eq:norm trace} is not the only restriction imposed on the frame vector norms by the frame operator: the partial sums of the squared frame vector norms must be bounded above by the partial sums of the eigenvalues of the frame operator~\cite{Casazza:2010ti,Flaschka:2005dq}. More precisely, there exists a frame $\{f_i\}_{i=1}^N$ with $\|f_i\|^2 = r_i > 0$ and frame spectrum $\blam$ if and only if \eqref{eq:norm trace} holds and, for all $k = 1 , \dots , d$, 
\begin{equation}\label{eq:parseval admissible}
    \sum_{i=1}^k r_i \leq \sum_{i=1}^k \lambda_i.
\end{equation}
Given $\blam$, we will call a non-increasing list $\br$ of positive numbers satisfying~\eqref{eq:norm trace} and~\eqref{eq:parseval admissible} $\blam$-\textit{admissible}, and if all inequalities in~\eqref{eq:parseval admissible} are strict we call the list \textit{strongly $\blam$-admissible}. This terminology comes from Casazza and Leon~\cite{Casazza:2010ti}, who showed that $\frames_{\blam}(\br)$ is non-empty if and only if $\br$ is $\blam$-admissible. Although the terminology does not appear in the work of Casazza and Leon, in general one says that the vector $\blam$ \emph{majorizes} the vector $\br$ in the case that the relations \eqref{eq:parseval admissible} hold \cite{marshall1979inequalities}.

For $\{f_i\}_{i=1}^N \in \frames$, the \emph{spark} of $\{f_i\}_{i=1}^N$ is the size of the smallest linearly dependent subset~\cite{donoho_optimally_2003}. The spark is bounded above by $d+1$, and a frame with spark equal to $d+1$ is called a \emph{full-spark frame}. Equivalently, a frame in $\frames$ is full spark if and only if all of its size-$d$ subsets are bases. Full spark frames are often desirable, for example because they provide unique reconstructions of the largest possible class of sparse signals~\cite{donoho_optimally_2003,Alexeev:2012jk}.

We are now ready to state our main theorem: 

\begin{thm}\label{thm:main}
    Suppose $N > d \geq 1$. Let $\br = (r_1, \dots , r_N)$ and $\blam = (\lambda_1 , \dots , \lambda_d)$ be nonincreasing lists of positive real numbers. There are three mutually exclusive possibilities for the space $\frames_{\blam}(\br)$ of frames $\{f_i\}_{i=1}^N$ with $\|f_i\|^2=r_i$ and frame spectrum $\blam$:
    \begin{enumerate}
        \item If $\br$ is not $\blam$-admissible, then $\frames_{\blam}(\br) = \emptyset$.
        \item If $\br$ is $\blam$-admissible but not strongly $\blam$-admissible, then $\frames_{\blam}(\br)$ is nonempty but consists entirely of frames which are not full spark.
        \item If $\br$ is strongly $\blam$-admissible, then full spark frames have full measure in $\frames_{\blam}(\br)$.
    \end{enumerate}
\end{thm}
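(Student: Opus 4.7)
The three cases of the theorem should be handled separately. Case~(1) is immediate from the Casazza-Leon theorem cited above. For case~(2), the plan is to show that any partial sum equality forces an orthogonal decomposition of the frame, from which non-full-spark follows combinatorially. Suppose $\sum_{i=1}^k r_i = \sum_{i=1}^k \lambda_i$ for some $1 \leq k \leq d - 1$. Applying the equality case of the Schur-Horn theorem to the Gram matrix $G = F^*F$, whose sorted diagonal is $\br$ and whose nonzero spectrum is $\blam$, forces $G$ to be block diagonal with a $k \times k$ upper-left block, so $\langle f_i, f_j \rangle = 0$ whenever $i \leq k < j$. Combined with the rank condition on $F$, this yields an orthogonal direct-sum decomposition $\C^d = \operatorname{span}(f_1, \dots, f_k) \oplus \operatorname{span}(f_{k+1}, \dots, f_N)$, and a dependent $d$-subset is produced using $N > d$: either the first $k$ vectors are already linearly dependent and we pad with $d - k$ of the remaining vectors, or they are linearly independent (so the second summand has dimension $d - k$) and any $d - k + 1$ of the last $N - k$ vectors form a dependent subset that can be extended by $k - 1$ of the first into a $d$-subset.

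For case~(3), the main novelty, my plan is to reduce the measure-zero claim to the existence of a single full-spark frame. By \Cref{thm:frame space manifold eigenvalue characterization}, strong $\blam$-admissibility makes $\frames_{\blam}(\br)$ a smooth complex submanifold of $\C^{d \times N}$. The non-full-spark locus is the finite union $\bigcup_S \{F \in \frames_{\blam}(\br) : \det F_S = 0\}$ over size-$d$ subsets $S \subseteq [N]$, with each term a complex analytic subvariety cut out by a single holomorphic equation. Provided each such subvariety is proper in $\frames_{\blam}(\br)$, it has real codimension at least two, hence Hausdorff-measure zero, and the finite union is measure zero as well. It therefore suffices to exhibit, for every $S$, some $F \in \frames_{\blam}(\br)$ with $\det F_S \neq 0$.

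The existence of full-spark frames in $\frames_{\blam}(\br)$ is the substantive content and, I expect, the main obstacle. My plan is to leverage the toric symplectic structure of \Cref{thm:toric_symplectic}: the open dense subset $U \subseteq \frames_{\blam}(\br)$ is a toric symplectic manifold whose moment polytope has nonempty interior by strong admissibility, and each hypersurface $\{\det F_S = 0\}$ is invariant under the torus action (which rescales columns by unit phases, multiplying $\det F_S$ by a phase). One natural route is to take as a base case the full-spark FUNTF provided by Cahill-Mixon-Strawn~\cite{Cahill:2017gv} and propagate through the convex, hence connected, space of strongly admissible pairs $(\br, \blam)$ using openness of the full-spark condition; the delicate step here is making the continuation rigorous across varying moduli. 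An alternative, avoiding deformation, is to invoke the eigenstep construction of Cahill et al.~\cite{Cahill:2013jv} with eigensteps chosen generically in the interior of the moment polytope and to argue by genericity that the resulting frame is full spark; I expect this approach to be cleaner in the symplectic framework, with the genericity statement being the hardest point.
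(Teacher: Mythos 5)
Your Case~(1) matches the paper. Your Case~(2) is correct but takes a genuinely different route from the paper. You apply the equality case of the Schur--Horn majorization (really, Ky Fan's maximum principle) directly to the $N \times N$ Gram matrix: equality $\sum_{i=1}^k r_i = \sum_{i=1}^k \lambda_i$ forces $\mathrm{span}(e_1,\dots,e_k) \subset \C^N$ to be $F^\ast F$-invariant, hence $F^\ast F$ is block diagonal, and from there a dependent $d$-subset is easy to extract. (One small simplification: the upper-left $k\times k$ block has eigenvalues $\lambda_1,\dots,\lambda_k > 0$, so $f_1,\dots,f_k$ are \emph{always} linearly independent; the first branch of your case split never occurs.) The paper instead reverses the columns and runs the eigenstep interlacing inequalities forward to locate a zero eigenvalue in a partial frame operator. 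Both arguments are elementary; yours is arguably cleaner if you are comfortable citing the equality case, while the paper's stays entirely within the eigenstep formalism it has already set up.

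Case~(3), however, has a genuine gap at the very first step. The claim that strong $\blam$-admissibility makes $\frames_{\blam}(\br)$ a smooth manifold is false: \Cref{thm:frame space manifold eigenvalue characterization} gives a \emph{different} criterion, and it fails for, e.g., the FUNTF space $\mathcal{F}^{2,4}_{(2,2)}(1,1,1,1)$, where $\br=(1,1,1,1)$ is strongly $(2,2)$-admissible yet the partition $\br'=(1,1)$, $\blam'=(2)$ certifies singularity. Worse, even when $\frames_{\blam}(\br)$ \emph{is} a manifold, it is never a \emph{complex} submanifold of $\C^{d \times N}$: the defining conditions $\|f_i\|^2 = r_i$ and ``spectrum of $FF^\ast$ equals $\blam$'' are real-analytic, not holomorphic, and by \Cref{cor:dimension} its real dimension $2dN - N + 1 - \sum k_j^2$ is frequently odd (e.g., FUNTFs with $d=3$, $N=5$ have real dimension $17$). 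So the step ``proper complex analytic subvariety has real codimension at least two'' has no foothold, and the proposed reduction to exhibiting a single full-spark frame collapses. What is actually missing is the Duistermaat--Heckman theorem: the paper works on the open dense toric stratum of the \emph{quotient} $\frames_{\blam}(\br)/(\unitary \times G)$, where the eigenstep map pushes the symplectic (i.e., Hausdorff) measure forward to \emph{Lebesgue} measure on the eigenstep polytope; the spark-deficiency condition becomes (up to column permutation) a hyperplane condition $\mu_{dd} = 0$, whose intersection with the polytope has Lebesgue measure zero. Your route~B gestures at eigenstep genericity but, without Duistermaat--Heckman, ``generic eigensteps'' does not translate into ``full Hausdorff measure on $\frames_{\blam}(\br)$,'' and your route~A (continuation from a CMS full-spark FUNTF) would at best give an existence statement, not a measure statement. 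If you want to save the complex-geometric instinct, you must descend to the K\"ahler \emph{quotient} and treat $\det F_S$ as a section of a line bundle there---which is essentially the paper's algebraic sketch in \S\ref{sub:algebraic approach}, and is considerably more machinery than you budgeted for.
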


In particular, since $\br=(1, \dots , 1)$ is strongly $\blam=(\frac{N}{d}, \dots , \frac{N}{d})$-admissible whenever $N > d$ and since $\mathcal{F}^{d,d}_{(1, \dots , 1)}(1,\dots,1)$ consists of orthonormal bases, which are certainly full spark, we have:

\begin{cor}\label{cor:FUNTF}
    For any $N \geq d \geq 1$, the full-spark frames have full measure inside the space $\FUNTF$ of unit-norm tight frames (FUNTFs) in $\C^d$.
\end{cor}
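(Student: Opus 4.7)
The plan is to apply \Cref{thm:main} directly, since by construction $\FUNTF = \frames_{\blam}(\br)$ for $\br = (1, \dots, 1)$ and $\blam = \bigl(\tfrac{N}{d}, \dots, \tfrac{N}{d}\bigr)$. The entire task then reduces to verifying that this choice of $(\br, \blam)$ lands in case~(3) of the theorem, and handling the boundary value $N = d$ separately, since the corollary is stated for all $N \geq d \geq 1$ whereas \Cref{thm:main} requires $N > d$.

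First, I would check the trace condition~\eqref{eq:norm trace}: $\sum_{i=1}^N 1 = N = d \cdot \tfrac{N}{d} = \sum_{j=1}^d \tfrac{N}{d}$, which is an identity in $N$ and $d$. Next, the majorization inequalities~\eqref{eq:parseval admissible}: for each $k \in \{1, \dots, d\}$, the partial sums are $\sum_{i=1}^k r_i = k$ and $\sum_{j=1}^k \lambda_j = \tfrac{kN}{d}$, and the strict inequality $k < \tfrac{kN}{d}$ holds exactly when $N > d$. So in the principal case $N > d$, every inequality in~\eqref{eq:parseval admissible} is strict, $\br$ is strongly $\blam$-admissible, and part~(3) of \Cref{thm:main} immediately yields that full-spark frames have full measure in $\FUNTF$.

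For the boundary case $N = d$, strong admissibility fails at $k = d$, but the situation is easier rather than harder: a matrix $F \in \C^{d \times d}$ with $FF^* = \Id_d$ is unitary, so its columns form an orthonormal basis of $\C^d$, which is trivially full spark because the only size-$d$ subset is the whole basis itself. Thus full-spark frames have full measure---in fact, they are everything---and the corollary follows. There is no substantive obstacle; the corollary is a direct specialization of \Cref{thm:main}, with the only care being the separate (and trivial) treatment of $N = d$.
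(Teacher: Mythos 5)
Your proof is correct and is essentially the same argument the paper gives immediately before stating the corollary: check that $\br=(1,\dots,1)$ is strongly $\blam=(\tfrac{N}{d},\dots,\tfrac{N}{d})$-admissible when $N>d$ so that part~(3) of \Cref{thm:main} applies, and handle $N=d$ separately by noting that $\mathcal{F}^{d,d}_{(1,\dots,1)}(1,\dots,1)$ consists of orthonormal bases, which are all full spark. You have simply written out the admissibility verification in more detail.
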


This result essentially recovers the complex case of \cite[Theorem~1.6]{Cahill:2017gv}, where the authors showed that full spark frames form an open dense subset of $\FUNTF$. Our corollary slightly sharpens this result, in that we are able to refer precisely to the canonical probability measure on $\FUNTF$.

\paragraph{Structure of the paper.}

\Cref{sec:symplectic_structure} begins with an exposition of the relevant ideas and tools from symplectic geometry. These tools are then applied to show that various spaces of frames with prescribed data have symplectic structures. Then, parameters $\blam$ and $\br$ for which the frame variety $\frames_{\blam}(\br)$ is a smooth manifold are characterized. \Cref{sec:toric_structure} is devoted to showing that the space $\frames_{\blam}(\br)$ has a dense open subset which projects onto a toric symplectic manifold (we define and describe the basic properties of such an object in \Cref{sec:toric_symplectic_manifold}). The main theorem is proved in \Cref{sec:proof_of_main_theorem}, as an application of the toric symplectic structure from the previous section. An alternative proof using related tools from algebraic geometry is also sketched. The paper concludes with a brief discussion of future directions in \Cref{sec:discussion}.

\section{Symplectic Structure of Frame Varieties}\label{sec:symplectic_structure}

In this section, we describe the symplectic structures on the frame varieties of interest; i.e., spaces of frames with prescribed frame spectra and/or norms. It has been previously observed in the literature that certain parameter choices may lead to frame spaces with singularities \cite{Dykema:2006ux,Strawn:2010bn,Cahill:2017gv}. We give a precise characterization of this phenomenon and describe the local structure of singular points later in this section.

\subsection{Symplectic Geometry}

We begin with a review of some concepts of symplectic geometry, with a focus on various notions of reducing a symplectic manifold by a Lie group action. We use \cite{mcduff2017introduction} as our main reference for the basics of symplectic manifolds. The ideas we present are standard in the field of symplectic geometry; this subsection is mainly intended as a quick reference for non-experts and to standardize our notation.

A \emph{symplectic form} on a (smooth, real) manifold $M$ is a closed, nondegenerate 2-form $\omega$. For a point $p \in M$ and tangent vectors $X,Y \in T_p M$, we write $\omega_p(X,Y) \in \R$ for the evaluation of $\omega$ on the vectors. The closedness condition means that the exterior derivative $d\omega$ is identically zero and the nondegeneracy condition means that for every nonzero $X \in T_p M$ there exists $Y \in T_p M$ such that $\omega_p(X,Y) \neq 0$. A manifold endowed with a symplectic form is called a \emph{symplectic manifold}, denoted $(M,\omega)$ or simply as $M$ when the symplectic form is understood to be fixed. A simple argument shows that if a manifold $M$ admits a symplectic form, then it must be even dimensional (over the reals). 

\begin{example}[Complex $n$-Space] \label{ex:complex space}
The prototypical example of a symplectic manifold is $n$-dimensional complex space $\C^n$, which is considered as a $2n$-dimensional real manifold via the natural identification $\C^n \approx \R^{2n}$. For any $n$-tuple $p$ of complex numbers, there is a natural isomorphism $T_{p} \C^n \approx \C^n$. Coordinates $(x_1 + \sqrt{-1}y_1, \ldots, x_n + \sqrt{-1}y_n)$ for $\C^n$ correspond to real coordinates $(x_1,\ldots,x_n,y_1,\ldots,y_n)$ in which a symplectic form is given by
\begin{equation}\label{eqn:standard_symplectic}
\omega = dx_1 \wedge dy_1 + \cdots + dx_n \wedge dy_n.
\end{equation}
This is referred to as the \emph{standard symplectic form on $\C^{n}$}. In complex coordinates, \eqref{eqn:standard_symplectic} is expressed concretely for $p \in \C^n$ and \[
Z = (z_1,\ldots,z_n), \, W = (w_1,\ldots,w_n) \in T_{p} \C^n \approx \C^n
\]
by
\[
\omega_{p}(Z,W) = -\Im \left(\overline{w_1} z_1 + \cdots + \overline{w_n} z_n \right) = -\Im \left(W^\ast Z\right) = -\Im \left<Z,W\right>,
\]
where $\Im$ denotes the imaginary part of a complex number and $\left<\cdot,\cdot\right>$ is the standard Hermitian inner product on $\C^n$.

The most important example of a symplectic manifold for our purposes is complex matrix space $\C^{d \times N}$, which is just a reshaped version of the space $\C^{d \cdot N}$ described above. For a matrix $F \in \C^{d \times N}$, we have a natural identification $T_F \C^{d \times N} \approx \C^{d \times N}$. The canonical symplectic form on $\C^{d \times N}$ is defined as
\[
\omega_F(X,Y) = -\Im \tr(Y^\ast X).
\]
This is just a transformation of the canonical symplectic form on $\C^{d \cdot N}$ under the reshaping map.

Since the space of frames $\frames \subset \C^{d\times N}$ is an open submanifold, the standard symplectic structure on $\C^{d \times N}$ restricts to make $\frames$ a symplectic manifold.
\end{example}

In fact, \emph{every} $2n$-dimensional symplectic manifold is locally equivalent to $\C^n$ with the standard symplectic form. Let us now make this statement precise. If $\Psi:N \to M$ is a smooth map from a manifold $N$ to a symplectic manifold $(M,\omega)$, the \emph{pullback form} $\Psi^\ast \omega$ on $N$ is defined by
\[
(\Psi^\ast \omega)_p (X,Y) = \omega_{\Psi(p)}(D\Psi(p)(X),D\Psi(p)(Y)),
\]
for $X,Y \in T_p N$ and where $D\Psi(p):T_p N \to T_{\Psi(p)} M$ denotes the derivative of $\Psi$ at $p$. If $(N,\eta)$ is also a symplectic manifold and $\Psi$ is a diffeomorphism with the property that $\Psi^\ast \omega = \eta$, then we say $\Psi$ is a \emph{symplectomorphism} and that $(N,\eta)$ and $(M,\omega)$ are \emph{symplectomorphic}. A fundamental result of symplectic geometry is Darboux's Theorem~\cite[Theorem 3.2.2]{mcduff2017introduction}: every point in a $2n$-dimensional symplectic manifold has an open neighborhood $U$ such that the symplectic manifold $(U,\omega|_U)$ is symplectomorphic to $\C^n$ with the standard symplectic form.

An important aspect of symplectic geometry is the study of interactions between symplectic structures and certain group actions on their manifolds. Let $G$ be a Lie group with Lie algebra $\mathfrak{g}$ and suppose that $G$ acts on a manifold $M$ endowed with a symplectic form $\omega$. For $p \in M$ and $g \in G$, let $g \cdot p \in M$ denote the action of $g$ on $p$. To each $\xi \in \mathfrak{g}$, one associates an \emph{infinitesimal vector field $Y_\xi$ on $M$} via the formula
\[
\left. Y_\xi \right|_{p} := \left.\frac{d}{d\epsilon}\right|_{\epsilon = 0} \exp(\epsilon \xi) \cdot p,
\]
where $\exp:\mathfrak{g} \to G$ is the exponential map of $G$. A map $\Phi:M \to \mathfrak{g}^\ast$, where $\mathfrak{g}^\ast$ denotes the dual to $\mathfrak{g}$, is called a \emph{momentum map} for the $G$-action if its derivative interacts with the symplectic form as follows. Let $D\Phi(p):T_p M \to T_{\Phi(p)} \mathfrak{g}^\ast$ denote the derivative of $\Phi$ at $p \in M$. Then for each $X \in T_p M$, $D\Phi(p)(X) \in T_{\Phi(p)} \mathfrak{g}^\ast \approx \mathfrak{g}^\ast$, where we use the natural isomorphism coming from the fact that $\mathfrak{g}^\ast$ is a vector space. Then $D\Phi(p)(X):\mathfrak{g} \to \R$, and for each $\xi \in \mathfrak{g}$ we require
\[
D\Phi(p)(X)(\xi) = \omega_p(Y_\xi|_p,X).
\]
We also require that the momentum map $\Phi$ is equivariant, in the following sense. Recall that the \emph{adjoint action} of $G$ on $\mathfrak{g}$ is defined, for each $g \in G$, by the map $\Ad_g:\mathfrak{g} \to \mathfrak{g}$ which is the derivative at the identity of the conjugation map $h\mapsto g h g^{-1}$. The corresponding \emph{coadjoint action} of $G$ on the dual Lie algebra $\mathfrak{g}^\ast$ is defined, for each $g \in G$, by the map $\Ad_g^\ast:\mathfrak{g}^\ast \to \mathfrak{g}^\ast$ given by $\Ad_g^\ast(\chi)(\xi) := \chi(\Ad_{g^{-1}}(\xi))$. When $G$ is a matrix group, both the adjoint and coadjoint actions can be interpreted as conjugation actions. The momentum map is required to be equivariant with respect to the given $G$-action on $M$ and the coadjoint action on $\mathfrak{g}^\ast$. Explicitly, this means that, for each $g \in G$ and each $p \in M$,
\[
	\Ad_g^\ast(\Phi(p))=\Phi(g \cdot p).
\]

If a $G$-action admits a momentum map, then we say the action is \emph{Hamiltonian}. Hamiltonian actions give the appropriate setting for performing a quotient operation in the symplectic category. 

\begin{thm}[Marsden--Weinstein--Meyer Theorem for Regular Values~\cite{marsden1974reduction,Meyer:1973wu}]\label{thm:marsden_weinstein}
Let $(M,\omega)$ be a symplectic manifold with a Hamiltonian $G$-action, let $\Phi:M \to \mathfrak{g}^\ast$ be a momentum map for the action and let $\chi \in \mathfrak{g}^\ast$ be a regular value such that $G$ acts freely on the level set $\Phi^{-1}(\chi)$. Then the manifold 
\[
M \sslash_{\chi} G := \Phi^{-1}(\chi)/G, 
\]
called the \emph{symplectic reduction} or \emph{symplectic quotient} over $\chi$, admits a symplectic structure $\omega_{red}$ which is uniquely characterized by the equation
\[
q^\ast \omega_{red} = \iota^\ast \omega,
\]
where $q:\Phi^{-1}(\chi) \to M\sslash_{\chi} G$ is the quotient map and $\iota:\Phi^{-1}(\chi) \to M$ is the inclusion map.
\end{thm}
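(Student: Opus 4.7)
The plan is to follow the classical four-step construction of symplectic reduction, using the momentum map equation as the linchpin.

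\textbf{Manifold structure.} Since $\chi$ is a regular value of $\Phi$, the level set $Z := \Phi^{-1}(\chi)$ is an embedded submanifold of $M$ of codimension $\dim G$ by the regular value theorem. Assuming properness of the action (automatic in our intended applications, where $G$ is compact), freeness of the $G$-action on $Z$ supplies $Z/G$ with a unique smooth structure making $q:Z \to Z/G$ a surjective submersion, with $\dim(M\sslash_\chi G) = \dim M - 2\dim G$.

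\textbf{Tangent space identification and descent of the form.} The key identity is
\[
T_p Z = \bigl(T_p(G\cdot p)\bigr)^{\omega},
\]
that is, the tangent space to the level set at $p$ is the symplectic orthogonal of the orbit tangent space. This drops straight out of the momentum map equation: $X \in \ker D\Phi(p) = T_pZ$ if and only if $\omega_p(Y_\xi|_p, X) = 0$ for every $\xi \in \mathfrak{g}$. Consequently $(\iota^\ast\omega)_p(Y_\xi|_p, Y) = 0$ whenever $Y \in T_pZ$, so $\iota^\ast\omega$ annihilates the orbit directions. Combined with the $G$-invariance of $\iota^\ast\omega$ (which follows from the equivariance of $\Phi$), a standard descent argument produces a unique 2-form $\omega_{red}$ on $Z/G$ satisfying $q^\ast\omega_{red} = \iota^\ast\omega$. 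Uniqueness is automatic because $q^\ast$ is injective on forms when $q$ is a submersion.

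\textbf{Closedness and nondegeneracy.} Closedness is a one-line manipulation: $q^\ast d\omega_{red} = d q^\ast\omega_{red} = d\iota^\ast\omega = \iota^\ast d\omega = 0$, and injectivity of $q^\ast$ yields $d\omega_{red}=0$. I expect nondegeneracy to be the main technical hurdle. Suppose $(\omega_{red})_{[p]}(Dq(p)X, \cdot) \equiv 0$ on $T_{[p]}(Z/G)$. Lifting, this says $\omega_p(X,Y) = 0$ for every $Y \in T_pZ = (T_p(G\cdot p))^\omega$. The biorthogonality identity for the nondegenerate form $\omega_p$ then forces $X \in ((T_p(G\cdot p))^\omega)^\omega = T_p(G\cdot p)$. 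Since orbits lie in $Z$, we also have $X \in T_pZ$, whence $Dq(p)X = 0$ as required.

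The genuine work sits in the nondegeneracy argument, which exploits the self-dual geometry of the tangent-space identification via biorthogonality; all the remaining steps (manifold structure, well-definedness of the descended form, $G$-invariance of $\iota^\ast\omega$, closedness, uniqueness) are largely formal once that identification is in hand. The freeness hypothesis enters precisely so that $\dim T_p(G\cdot p) = \dim G$ everywhere on $Z$, ensuring both the smooth quotient structure and the clean dimension count underlying the biorthogonality step.
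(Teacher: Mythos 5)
The paper states this as a classical result, citing Marsden--Weinstein and Meyer, and does not supply its own proof, so there is no in-paper argument to compare against. Your proof is the standard textbook argument and is correct: the identity $T_pZ = (T_p(G\cdot p))^\omega$, which drops out of the momentum-map equation, drives both the descent of $\iota^\ast\omega$ (horizontality) and the nondegeneracy of $\omega_{red}$ (via biorthogonality $(W^\omega)^\omega = W$ for the nondegenerate form $\omega_p$), and the remaining steps---manifold structure of the quotient, closedness via injectivity of $q^\ast$, uniqueness---are formal once that identity is in hand. One small imprecision is worth flagging: the $G$-invariance of $\iota^\ast\omega$ does not follow from the equivariance of $\Phi$ alone. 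It needs two inputs: (i) the $G$-action preserves $\omega$ on all of $M$, which for a Hamiltonian action of connected $G$ is automatic because the infinitesimal generators $Y_\xi$ are Hamiltonian vector fields (with Hamiltonian $\langle\Phi,\xi\rangle$) and hence symplectic; and (ii) the level set $Z = \Phi^{-1}(\chi)$ is $G$-invariant, which is where the equivariance of $\Phi$ actually enters, together with the fact that $\chi$ is fixed by the coadjoint action (a point the paper addresses in the discussion immediately following the theorem). You cite only (ii).
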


See~\cite[Example~III.2.18]{Audin:2004bh} for a construction of the complex projective space $\mathbb{CP}^{n-1}$ as a symplectic reduction $\C^n \sslash_\xi U(1)$, where $U(1)$ acts on $\C^n$ by scalar multiplication.

Notice that, in the statement of \Cref{thm:marsden_weinstein}, we required that $G$ acts freely on the level set $\Phi^{-1}(\chi)$. In particular, $G$ acts on this level set, meaning that for any $p \in M$ with $\Phi(p)=\chi$, it must be the case that $\Phi(g \cdot p) = \chi$ for all $g \in G$. By the equivariance required in the definition of the momentum map $\Phi$, this implies that $\Ad_g^\ast(\chi) = \chi$ for all $g \in G$; in other words, $\chi$ must be a fixed point of the coadjoint action of $G$. Conversely, $G$ will act on the level set over any fixed point of the coadjoint action.

More generally, when $\chi \in \mathfrak{g}^\ast$ is not a fixed point of the coadjoint action, we can still take a symplectic reduction over the \emph{coadjoint orbit} of $\chi$, defined to be the set
\[
\mathcal{O}_\chi := \left\{\mathrm{Ad}_g^\ast(\chi) \mid g \in G\right\}.
\]
In this case the equivariance of the momentum map ensures that $G$ acts on $\Phi^{-1}(\mathcal{O}_\chi)$.

\begin{thm}[Marsden--Weinstein--Meyer Theorem for Coadjoint Orbits~\cite{marsden1974reduction,Meyer:1973wu}]\label{thm:marsden_weinstein_coadjoint_orbit}
Let $M$, $\omega$, $G$ and $\Phi$ be as above and let $\chi \in \mathfrak{g}^\ast$ be a regular value. If the action of $G$ on $\Phi^{-1}(\mathcal{O}_\chi)$ is free, then the manifold 
\[
M \sslash_{\mathcal{O}_\chi} G := \Phi^{-1}(\mathcal{O}_\chi)/G
\]
admits a symplectic structure $\omega_{red}$ which is uniquely characterized by $q^\ast \omega_{red} = \iota^\ast \omega$, where $q$ denotes the projection map $\Phi^{-1}(\mathcal{O}_\chi) \to \Phi^{-1}(\mathcal{O}_\chi)/G$ and $\iota$ denotes the inclusion map $\Phi^{-1}(\mathcal{O}_\chi) \to M$.
\end{thm}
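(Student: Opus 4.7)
My plan is to reduce the statement to \Cref{thm:marsden_weinstein} via the classical \emph{shifting trick}, which realizes the orbit reduction $\Phi^{-1}(\mathcal{O}_\chi)/G$ as a point reduction of an enlarged symplectic manifold. The key ingredient is that each coadjoint orbit $\mathcal{O}_\chi \subset \mathfrak{g}^\ast$ carries its own canonical Kirillov--Kostant--Souriau symplectic form $\omega^{KKS}$, determined on tangent vectors $\ad^\ast_\xi(\eta), \ad^\ast_\zeta(\eta) \in T_\eta \mathcal{O}_\chi$ by
\[
\omega^{KKS}_\eta(\ad^\ast_\xi(\eta),\ad^\ast_\zeta(\eta)) = \eta([\xi,\zeta]),
\]
and the coadjoint $G$-action on $\mathcal{O}_\chi^- := (\mathcal{O}_\chi, -\omega^{KKS})$ is Hamiltonian with momentum map $\eta \mapsto -\eta$.

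With this in hand, I would form the product symplectic manifold $M \times \mathcal{O}_\chi^-$ with the diagonal $G$-action; a standard calculation shows that this action admits the momentum map $\tilde\Phi(p,\eta) := \Phi(p) - \eta$. The zero level set $\tilde\Phi^{-1}(0) = \{(p,\Phi(p)) : p \in \Phi^{-1}(\mathcal{O}_\chi)\}$ is $G$-equivariantly diffeomorphic to $\Phi^{-1}(\mathcal{O}_\chi)$ via projection onto the first factor, so the freeness of the $G$-action on $\Phi^{-1}(\mathcal{O}_\chi)$ transfers to freeness on $\tilde\Phi^{-1}(0)$; likewise, regularity of $0$ as a value of $\tilde\Phi$ follows from regularity of $\chi$ for $\Phi$ together with the fact that $T_\eta \mathcal{O}_\chi$ is spanned by infinitesimal coadjoint vectors. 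Applying \Cref{thm:marsden_weinstein} to $\tilde\Phi$ then produces a symplectic form on $(M \times \mathcal{O}_\chi^-) \sslash_0 G$, and I would transport it along the induced diffeomorphism $(M \times \mathcal{O}_\chi^-) \sslash_0 G \cong \Phi^{-1}(\mathcal{O}_\chi)/G$ to define $\omega_{red}$.

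To conclude, I would verify the characterizing relation $q^\ast \omega_{red} = \iota^\ast \omega$ by a direct computation: pulling the sum form $\omega \oplus (-\omega^{KKS})$ back along the section $p \mapsto (p,\Phi(p))$ and comparing to $\iota^\ast \omega$, the $\omega^{KKS}$-contribution from the orbit factor is precisely what is needed to kill the ``extra'' directions in $\Phi^{-1}(\mathcal{O}_\chi)$ which are tangent to the $G$-orbit but transverse to the level set $\Phi^{-1}(\Phi(p))$, so that the resulting form is basic for the quotient. Uniqueness of $\omega_{red}$ is automatic since $q$ is a surjective submersion. I anticipate the main obstacle to be this final compatibility check, which is the technical core of the shifting trick and rests on the momentum-map relation $D\Phi(p)(X)(\xi) = \omega_p(Y_\xi|_p, X)$ together with the explicit formula for $\omega^{KKS}$; once that identification of forms is in hand, the rest of the argument is formal.
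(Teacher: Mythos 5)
The paper states \Cref{thm:marsden_weinstein_coadjoint_orbit} as a classical result of Marsden--Weinstein and Meyer and gives no proof of it, so there is nothing to compare against on the paper side. Your shifting-trick reduction to \Cref{thm:marsden_weinstein} is the standard modern route, and the structural outline---the momentum map $\eta \mapsto -\eta$ on $\mathcal{O}_\chi^- = (\mathcal{O}_\chi, -\omega^{\mathrm{KKS}})$, the diagonal momentum map $\tilde\Phi(p,\eta) = \Phi(p) - \eta$, the $G$-equivariant identification of $\tilde\Phi^{-1}(0)$ with $\Phi^{-1}(\mathcal{O}_\chi)$ via the first projection, and the transfer of freeness---is sound. (For regularity of $0$, the cleanest argument is that equivariance of $\Phi$ makes \emph{every} point of $\mathcal{O}_\chi$ a regular value, so $D\Phi(p)$ is already surjective for all $p \in \Phi^{-1}(\mathcal{O}_\chi)$; the remark about $T_\eta\mathcal{O}_\chi$ being spanned by infinitesimal coadjoint vectors is not needed.)

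The trouble is in your concluding paragraph, and it is less a gap than an internal inconsistency with the identity you claim to verify. As you yourself note, $\iota^*\omega$ is \emph{not} basic on $\Phi^{-1}(\mathcal{O}_\chi)$: for $\xi \in \mathfrak{g}$ and $X$ tangent to $\Phi^{-1}(\mathcal{O}_\chi)$, the momentum-map relation gives $\omega_p(Y_\xi|_p, X) = D\Phi(p)(X)(\xi)$, and since $D\Phi(p)(X)$ ranges over $T_{\Phi(p)}\mathcal{O}_\chi$ this pairing is generically nonzero whenever $\mathcal{O}_\chi$ is not a single point. What the shifting trick actually produces, by pulling $\omega \oplus (-\omega^{\mathrm{KKS}})$ back along the section $p \mapsto (p,\Phi(p))$, is the form
\[
\iota^*\omega - \bigl(\Phi|_{\Phi^{-1}(\mathcal{O}_\chi)}\bigr)^* \omega^{\mathrm{KKS}},
\]
and---exactly as you say---it is the KKS correction that renders it basic. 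So the characterizing identity your construction proves is the Marle / Kazhdan--Kostant--Sternberg relation $q^*\omega_{red} = \iota^*\omega - \Phi^*\omega^{\mathrm{KKS}}$ on $\Phi^{-1}(\mathcal{O}_\chi)$, \emph{not} $q^*\omega_{red} = \iota^*\omega$. The latter cannot hold verbatim for a nontrivial orbit, precisely because $\iota^*\omega$ alone fails to descend. The honest fix is either to state the corrected identity, or to observe that $q^*\omega_{red}$ and $\iota^*\omega$ agree after pulling back along the inclusion of the single fiber $\Phi^{-1}(\chi) \hookrightarrow \Phi^{-1}(\mathcal{O}_\chi)$, where $\Phi^*\omega^{\mathrm{KKS}}$ vanishes because $\Phi$ is constant there---this is how the orbit-reduced form is identified with the point reduction $\Phi^{-1}(\chi)/G_\chi$. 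The paper's stated characterization should itself be read with this caveat, as the paper only ever uses the fact that the quotient is symplectic, never the pullback identity in its literal form.
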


It is straightforward to show that when $\chi \in \mathfrak{g}^\ast$ is a regular value, the action of $G$ on $\Phi^{-1}(\mathcal{O}_\chi)$ is at worst locally free. \Cref{thm:marsden_weinstein_coadjoint_orbit} easily generalizes to this setting, with the only modification being that $M \sslash_{\mathcal{O}_\chi} G$ is a symplectic orbifold rather than a symplectic manifold.

Perhaps surprisingly, if we are more concerned with the ``symplectic'' part of this statement than the ``manifold'' (or ``orbifold'') part, Sjamaar and Lerman~\cite{Sjamaar:1991fe} have shown that $\chi$ being a regular value is not an essential hypothesis in \Cref{thm:marsden_weinstein,thm:marsden_weinstein_coadjoint_orbit}. When $\chi$ is a singular value of $\Phi$ then $\Phi^{-1}(\mathcal{O}_\chi)$ is not a manifold, but it nonetheless admits a $G$ action and the quotient still admits a natural symplectic structure.

\begin{thm}[Sjamaar--Lerman~\cite{Sjamaar:1991fe}]\label{thm:sjamaar_lerman}
	Let $M$, $\omega$, $G$ and $\Phi$ be as above and let $\chi \in \mathfrak{g}^\ast$. The symplectic reduction
	\[
	M \sslash_{\mathcal{O}_\chi} G := \Phi^{-1}(\mathcal{O}_\chi)/G
	\]
	is a symplectic stratified space with symplectic structure $\omega_{red}$ characterized by $q^\ast \omega_{red} = \iota^\ast \omega$, where $\iota$ again denotes the inclusion map $\Phi^{-1}(\mathcal{O}_\chi) \to M$.
\end{thm}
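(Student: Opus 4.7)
The plan is to reduce to the regular-value case of \Cref{thm:marsden_weinstein_coadjoint_orbit} by stratifying $M$ according to orbit type and performing a symplectic quotient on each piece separately. For a closed subgroup $H \leq G$, let $M_{(H)}$ denote the set of points $p \in M$ whose $G$-stabilizer is conjugate to $H$. The slice theorem for proper Lie group actions shows that each connected component of $M_{(H)}$ is a locally closed smooth submanifold, and that the collection of orbit-type strata gives a locally finite stratification of $M$. The candidate decomposition of $\Phi^{-1}(\mathcal{O}_\chi)/G$ is then into the pieces $(M_{(H)} \cap \Phi^{-1}(\mathcal{O}_\chi))/G$ as $H$ ranges over conjugacy classes of stabilizers.

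Next, I would equip each stratum with its own symplectic data. At $p \in M_{(H)}$ the tangent space $T_p M_{(H)}$ coincides with the $H$-fixed subspace $(T_p M)^H$; since $H$ is compact and acts symplectically, averaging shows this subspace is symplectic in $(T_p M, \omega_p)$, so $\omega$ restricts to a symplectic form on $M_{(H)}$. The normalizer $N(H)$ preserves $M_{(H)}$ and acts there with kernel $H$, inducing a free action of $L := N(H)/H$. By equivariance of $\Phi$, its image on $M_{(H)}$ lies in the $H$-fixed locus $(\mathfrak{g}^\ast)^H$, and after a standard identification and projection this yields a momentum map for the free $L$-action. The Marle--Guillemin--Sternberg local normal form, which provides a $G$-equivariant symplectic model for $(M,\Phi)$ in a neighborhood of every orbit, guarantees that the relevant value is a regular value of this restricted momentum map, so \Cref{thm:marsden_weinstein_coadjoint_orbit} applies stratum by stratum to produce a symplectic form $\omega_{(H)}$ on $(M_{(H)} \cap \Phi^{-1}(\mathcal{O}_\chi))/G$, uniquely characterized by the pullback equation on that stratum.

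The hard part is assembling these reduced strata into a single symplectic stratified space in a coherent way. Topologically, one must verify the frontier condition that the closure of each reduced stratum is a union of reduced strata of lower depth. Geometrically, one must show that the forms $\omega_{(H)}$ fit together, in the sense that the algebra of smooth functions on $\Phi^{-1}(\mathcal{O}_\chi)/G$---defined as the restrictions of $G$-invariant smooth functions on $M$---forms a Poisson algebra whose Poisson bracket restricts on each stratum to the one coming from $\omega_{(H)}$. Both points follow from the local normal form, which reduces the verification to an explicit linear computation on a symplectic $H$-slice. The defining identity $q^\ast \omega_{red} = \iota^\ast \omega$ is then inherited stratum-by-stratum from the uniqueness statement in \Cref{thm:marsden_weinstein_coadjoint_orbit}.
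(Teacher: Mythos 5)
The paper does not prove \Cref{thm:sjamaar_lerman}; the statement is attributed to Sjamaar and Lerman~\cite{Sjamaar:1991fe} and cited without argument, so there is no internal proof to compare your attempt against.

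Your sketch does follow the broad outline of the actual Sjamaar--Lerman argument: decompose $\Phi^{-1}(\mathcal{O}_\chi)$ by $G$-orbit type, reduce stratum by stratum, and use the Marle--Guillemin--Sternberg local normal form for the frontier and Poisson-compatibility checks. However, there is a genuine error in the second paragraph that the rest of the plan rests on. You claim that $T_p M_{(H)} = (T_p M)^H$ and conclude by averaging that $\omega$ restricts to a symplectic form on each orbit-type stratum $M_{(H)}$. This is false. The identity $T_p M_H = (T_p M)^H$ holds for the \emph{isotropy-type} locus $M_H := \{p \in M : G_p = H\}$, which is open in the fixed-point set $M^H$, and $M^H$ is indeed a symplectic submanifold by your averaging argument. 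But the \emph{orbit-type} stratum $M_{(H)} = G \cdot M_H$ additionally contains the $G$-orbit directions, which generally do not lie in $(T_p M)^H$, and the resulting submanifold need not be symplectic at all---it can even be Lagrangian. A standard example: take $M = T^\ast S^2$ with $G = \mathrm{SO}(3)$ acting by the cotangent lift. The stabilizer of the zero covector over $x \in S^2$ is the circle of rotations fixing $x$, while the stabilizer of any nonzero covector is trivial, so the orbit-type stratum with circle stabilizer is exactly the zero section---a Lagrangian submanifold on which $\omega$ restricts to zero. Thus the plan of ``reducing each symplectic stratum $M_{(H)}$ separately'' cannot be carried out as written. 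What Sjamaar and Lerman actually prove is only that the quotient $\bigl(M_{(H)} \cap \Phi^{-1}(\mathcal{O}_\chi)\bigr)/G$ carries a natural symplectic form; establishing this requires working directly inside the equivariant local normal-form model near each orbit, rather than restricting $\omega$ to $M_{(H)}$ as a submanifold of $M$.
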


Roughly speaking, $M \sslash_{\mathcal{O}_\chi} G$ being a symplectic stratified space means that it is the disjoint union of symplectic manifolds and that these manifolds fit together nicely. Somewhat remarkably, each connected component contains a unique manifold piece which is open and dense:

\begin{thm}[Sjamaar--Lerman~\cite{Sjamaar:1991fe}]\label{thm:sjamaar_lerman2}
	Let $M$, $\omega$, $G$, $\Phi$, and $\chi$ be as above. Then each connected component of the symplectic reduction $M \sslash_{\mathcal{O}_\chi} G$ has a unique open stratum which is a manifold and is connected and dense.
\end{thm}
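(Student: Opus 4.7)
The plan is to produce the strata of $M \sslash_{\mathcal{O}_\chi} G$ concretely via the orbit-type stratification of the $G$-action on $M$, and then to isolate the open dense one using the principal orbit type theorem for proper Lie group actions.

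First, I would recall the orbit-type stratification: for each conjugacy class $(H)$ of closed subgroups of $G$, the locus $M_{(H)} := \{p \in M : G_p \sim H\}$ of points whose stabilizer is conjugate to $H$ is a locally closed $G$-invariant submanifold, and these pieces partition $M$. Intersecting with the level set gives $S_{(H)} := \Phi^{-1}(\mathcal{O}_\chi) \cap M_{(H)}$, and the Marle--Guillemin--Sternberg local normal form for momentum maps shows that each $S_{(H)}$ is a submanifold of $M_{(H)}$ on which the restricted momentum map is clean. Since the $G$-action on $S_{(H)}$ has constant orbit type, the quotient $S_{(H)}/G$ is a smooth manifold, and after quotienting out the common stabilizer one can apply the regular Marsden--Weinstein--Meyer argument from \Cref{thm:marsden_weinstein_coadjoint_orbit} to equip it with a symplectic form. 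These are precisely the symplectic pieces in \Cref{thm:sjamaar_lerman}.

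Next, I would invoke the principal orbit type theorem: for any proper action of a Lie group on a connected manifold there is a unique minimal conjugacy class $(H_0)$ of stabilizers for which $M_{(H_0)}$ is open, connected, and dense in each connected component. To apply this to the reduction, I would fix a connected component $C$ of $M \sslash_{\mathcal{O}_\chi} G$, let $\widetilde{C} \subseteq \Phi^{-1}(\mathcal{O}_\chi)$ be its preimage under the quotient map, and work locally in Marle--Guillemin--Sternberg tubes around orbits in $\widetilde{C}$. Each such tube is modeled on a bundle over $\mathcal{O}_\chi$ whose fibers are linear symplectic slices, and the principal orbit type theorem applies cleanly to these slices to produce a local minimal stabilizer class. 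Upper semicontinuity of the orbit-type function, together with connectedness of $\widetilde{C}$, then forces these local choices to agree and yields a single globally defined conjugacy class $(H_0)$ whose stratum $S_{(H_0)} \cap \widetilde{C}$ is open, connected, and dense in $\widetilde{C}$. Taking the $G$-quotient preserves these properties, and the minimality characterization guarantees uniqueness.

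The main obstacle is that $\Phi^{-1}(\mathcal{O}_\chi)$ is itself only stratified rather than smooth, so the principal orbit type theorem cannot be applied to it directly; one must route it through the ambient manifold $M$ and the local normal form. The key technical step is verifying that the symplectic slice model honestly captures the orbit-type structure of $\Phi^{-1}(\mathcal{O}_\chi)$ near a singular orbit, so that minimality of the stabilizer in the slice translates to minimality in $\widetilde{C}$. Once this is in place, the open-dense-connected conclusion upstairs descends immediately to the quotient and gives the claimed unique principal stratum.
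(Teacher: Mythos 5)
The paper does not actually prove this theorem---it is cited directly from Sjamaar and Lerman \cite{Sjamaar:1991fe}---so there is no in-paper argument to compare against. Measured against the proof that Sjamaar and Lerman give, your sketch has the right architecture: the strata of the reduction are indexed by orbit types on the level set, each stratum inherits a symplectic form by a Marsden--Weinstein-type argument, the Marle--Guillemin--Sternberg normal form is the tool for probing the local structure near a singular orbit, and the patching of local principal strata by upper semicontinuity and connectedness is in the right spirit.

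The gap is at the step you flag as a ``key technical step to verify,'' and it is not a verification---it is the theorem. You invoke the principal orbit type theorem for the $H$-action on the linear symplectic slice $V$, which is a manifold; but what you actually need is that the minimal orbit-type locus is open and \emph{dense} in $V_0 := \Phi_V^{-1}(0)$, the zero level set of the quadratic slice momentum map, which is a singular cone rather than a manifold, and to which the principal orbit type theorem does not apply. Openness of the minimal-type locus in $V_0$ does follow from upper semicontinuity of the stabilizer, but density does not: the principal orbit type of $H$ on $V$ need not even be attained on $V_0$, and for the minimal type that is attained, its density inside $V_0$ is precisely what must be proved. Sjamaar and Lerman handle this by a genuine local argument (in effect an induction that repeatedly passes to smaller symplectic slices and exploits the cone structure of the local model). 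Your proposal states the conclusion of that argument as a hypothesis, so it omits the part of the proof that carries the mathematical weight.
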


Kirwan's proof that level sets of proper momentum maps are connected---this follows from~\cite[Remark~3.1]{Kirwan:1984im}, which handles the case where the domain is compact, together with the discussion surrounding~\cite[Remark~9.1]{Kirwan:1984jt}, which applies to proper moment maps---yields the following immediate corollary.

\begin{cor}\label{cor:proper}
	Let $M$, $\omega$, $G$, $\Phi$, and $\chi$ be as above. If $\Phi$ is proper (for example, if $G$ is compact), then the symplectic reduction $M \sslash_{\mathcal{O}_\chi} G$ has a unique open stratum which is a manifold and is connected and dense.
\end{cor}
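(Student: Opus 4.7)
The plan is to reduce the corollary to \Cref{thm:sjamaar_lerman2}: that theorem already produces the desired unique open dense manifold stratum on each connected component of $M\sslash_{\mathcal{O}_\chi} G$, so it suffices to show that, under the properness hypothesis, the symplectic reduction $M \sslash_{\mathcal{O}_\chi} G = \Phi^{-1}(\mathcal{O}_\chi)/G$ is itself connected. Once connectedness is established, there is a unique connected component and the corollary is immediate.

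First I would invoke the form of Kirwan's connectedness theorem cited in the paragraph preceding the corollary: when $\Phi$ is proper, the level set $\Phi^{-1}(\chi)$ is connected. Next I would argue that the restriction of the quotient map $q\colon \Phi^{-1}(\mathcal{O}_\chi) \to M\sslash_{\mathcal{O}_\chi} G$ to this single level set is already surjective, using the equivariance of $\Phi$. Indeed, for any $p \in \Phi^{-1}(\mathcal{O}_\chi)$, by definition $\Phi(p) = \Ad_g^\ast(\chi)$ for some $g \in G$, and equivariance gives
\[
\Phi(g^{-1}\cdot p) = \Ad_{g^{-1}}^\ast(\Phi(p)) = \Ad_{g^{-1}}^\ast(\Ad_g^\ast(\chi)) = \chi,
\]
so $g^{-1}\cdot p$ lies in $\Phi^{-1}(\chi)$ and represents the same $G$-orbit as $p$. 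Hence $q|_{\Phi^{-1}(\chi)}$ is a continuous surjection from the connected set $\Phi^{-1}(\chi)$ onto $M\sslash_{\mathcal{O}_\chi} G$, forcing the latter to be connected. Applying \Cref{thm:sjamaar_lerman2} to this unique connected component then yields the desired open dense manifold stratum.

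The only delicate point is justifying Kirwan's connectedness theorem in the proper (rather than compact-domain) setting, but this is precisely handled by combining Remark~3.1 of \cite{Kirwan:1984im} with the extension to proper moment maps discussed near Remark~9.1 of \cite{Kirwan:1984jt}, which is how the corollary's hypotheses were set up in the first place. Consequently, I do not expect any genuine obstacle beyond correctly citing this extension and verifying the short equivariance computation above.
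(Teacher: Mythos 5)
Your proof is correct and reconstructs the argument the paper leaves implicit: invoke Kirwan's connectedness of $\Phi^{-1}(\chi)$ for proper momentum maps, observe via equivariance that this level set already surjects onto $M \sslash_{\mathcal{O}_\chi} G$ under the quotient map, conclude the reduction is connected, and then apply \Cref{thm:sjamaar_lerman2} to the single connected component. This is the same approach as the paper; the equivariance computation you supply is exactly the detail needed to make the corollary ``immediate.''
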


Therefore, up to sets of lower dimension, the symplectic reduction even over a singular value of a proper momentum map is a connected symplectic manifold.

\subsection{Frame Spaces as Symplectic Quotients}

Various spaces of frames can be endowed with natural symplectic structures via \Cref{thm:marsden_weinstein_coadjoint_orbit,thm:sjamaar_lerman}. To be precise, the symplectic manifolds of interest are actually quotients of frame spaces by certain symmetry groups. Throughout this subsection, fix dimensions $d$ and $N$ and a frame spectrum ${\blam} = (\lambda_1, \dots , \lambda_d)$ with $\lambda_1 \geq \dots \geq \lambda_d > 0$.

\subsubsection{Frames with Prescribed Frame Spectrum}

The unitary group acts on $\frames_{\blam}$ by left matrix multiplication. We then consider the quotient space $\frames_{\blam}/\unitary$, which has a natural symplectic structure, as we will show below. 

Before stating the result, we observe that the space $\hermitian(d)$ of $d \times d$ Hermitian matrices may be identified with the dual Lie algebra $\mathfrak{u}(d)^\ast$ via the isomorphism $\alpha:\hermitian(d) \to \mathfrak{u}(d)^\ast$ taking $\xi$ to the linear functional $\alpha_\xi:\mathfrak{u}(d) \to \R$, defined by
\begin{equation}\label{eqn:hermitian_to_unitary_Lie_algebra}
\alpha_\xi(\eta) = \frac{\sqrt{-1}}{2}\mathrm{tr}(\eta^\ast \xi) = \left< \frac{\sqrt{-1}}{2} \xi, \eta \right>.
\end{equation}
We begin with a preliminary result about this isomorphism. We define a $\mathrm{U}(d)$-action on $\hermitian(d)$ which we (with a slight abuse of terminology) refer to as the \emph{adjoint action}:
\begin{align*}
        \mathrm{Ad}:\mathrm{U}(d) \times \hermitian(d) &\to \hermitian(d) \\
        (A,\xi) &\mapsto \mathrm{Ad}_A(\xi) := A \xi A^\ast.
    \end{align*}

\begin{lem}\label{lem:equivariance}
    The map $\alpha$ defined in \eqref{eqn:hermitian_to_unitary_Lie_algebra} is equivariant with respect to the adjoint action on $\hermitian(d)$ and the coadjoint action on $\mathfrak{u}(d)^\ast$. That is, 
    \[
    \alpha_{\mathrm{Ad}_A(\xi)} = \mathrm{Ad}_A^\ast \alpha_\xi.
    \]
    It follows that, under the map $\alpha$, the coadjoint orbit of any element of $\mathfrak{u}(d)^\ast$ is identified with the collection of Hermitian matrices with a fixed spectrum.
\end{lem}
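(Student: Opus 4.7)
The proof should be almost entirely a direct computation; the only real subtlety is keeping track of the conventions hidden in the definition of the coadjoint action. The plan breaks into two pieces, corresponding to the two assertions of the lemma.

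First, for the equivariance statement, I would unwind the definitions. Since $\unitary$ is a matrix group, the adjoint action on $\mathfrak{u}(d)$ is conjugation, and for $A \in \unitary$ we have $A^{-1} = A^\ast$, so $\Ad_{A^{-1}}(\eta) = A^\ast \eta A$ for any skew-Hermitian $\eta$. Plugging this into the definition of the coadjoint action, I would compute
\[
(\Ad_A^\ast \alpha_\xi)(\eta) \;=\; \alpha_\xi(\Ad_{A^{-1}}(\eta)) \;=\; \alpha_\xi(A^\ast \eta A) \;=\; \frac{\sqrt{-1}}{2}\tr\!\left((A^\ast \eta A)^\ast \xi\right) \;=\; \frac{\sqrt{-1}}{2}\tr(A^\ast \eta^\ast A \xi).
\]
Then a single application of the cyclic invariance of trace rearranges the right-hand side as $\frac{\sqrt{-1}}{2}\tr(\eta^\ast (A\xi A^\ast))$, which is exactly $\alpha_{\Ad_A(\xi)}(\eta)$ by the definition of the adjoint action on $\hermitian(d)$. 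Since $\eta$ was arbitrary, this gives the desired identity of functionals.

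Second, for the identification of coadjoint orbits, I would combine the equivariance just established with the spectral theorem. By equivariance, $\alpha$ sends the adjoint orbit of $\xi$ in $\hermitian(d)$ bijectively onto the coadjoint orbit of $\alpha_\xi$ in $\mathfrak{u}(d)^\ast$. The adjoint orbit $\{A\xi A^\ast : A \in \unitary\}$ is precisely the set of Hermitian matrices with the same spectrum as $\xi$: any Hermitian matrix is unitarily diagonalizable, so two Hermitian matrices lie in the same orbit iff they are both unitarily conjugate to the same diagonal matrix of eigenvalues, iff they share a spectrum.

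I do not anticipate any serious obstacle; the main thing to be careful about is the $A^{-1}$ appearing in the definition $\Ad_g^\ast(\chi)(\xi) = \chi(\Ad_{g^{-1}}(\xi))$, since getting this backwards would produce a sign error in the exponent and a nonsensical identity. Everything else is trace manipulation and the spectral theorem.
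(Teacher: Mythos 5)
Your proof is correct and follows essentially the same computation as the paper: both reduce the equivariance to a single cyclic-permutation-of-trace identity (you work from $\Ad_A^\ast\alpha_\xi$ to $\alpha_{\Ad_A(\xi)}$, the paper goes the other direction, which is immaterial), and both deduce the orbit identification from the spectral theorem plus the just-established equivariance.
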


\begin{proof}
    The first part is a straightforward computation: for $\eta \in \mathfrak{u}(d)$,
    \begin{align*}
        \alpha_{\mathrm{Ad}_A(\xi)}(\eta)  &= \frac{\sqrt{-1}}{2}\mathrm{tr}(\eta^\ast A \xi A^\ast) = \frac{\sqrt{-1}}{2} \mathrm{tr}((A^\ast \eta A)^\ast \xi) = \alpha_\xi(A^\ast \eta A) = \mathrm{Ad}_A^\ast \alpha_\xi(\eta).
    \end{align*}
    
    Any Hermitian matrix $\xi$ with spectrum $\blam$ can be expressed via its eigendecomposition as $\xi = \mathrm{Ad}_A\left(\mathrm{diag}(\blam)\right)$ for some $A \in \mathrm{U}(d)$, so that the second part of the claim follows by equivariance.
\end{proof}

Based on the second part of the lemma, we use $\mathcal{O}_{\blam} \subset \hermitian(d)$ to denote the set of matrices with fixed spectrum $\blam$. 


We now describe the symplectic structure on the space of frames with prescribed frame spectrum.

\begin{prop}[\cite{NeedhamSGC}]\label{prop:fixed_frame_spectrum_symplectic}
The action of $\mathrm{U}(d)$ on $\C^{d \times N}$ by left multiplication is Hamiltonian with momentum map
\begin{align*}
\Phi_{\unitary}:\C^{d \times N} &\to \hermitian(d) \approx \mathfrak{u}(d)^\ast \\
F &\mapsto - FF^\ast.
\end{align*}
It follows that the space $\frames_{\blam}/\unitary$ is the symplectic quotient
\[
\C^{d \times N} \sslash_{\mathcal{O}_{-\blam}} \unitary.
\]
In particular, it has a natural symplectic structure.
\end{prop}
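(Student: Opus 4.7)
The plan is to verify the three conditions defining a Hamiltonian action with the proposed momentum map, and then identify the preimage of the desired coadjoint orbit with $\frames_{\blam}$ so that \Cref{thm:sjamaar_lerman} can be invoked. Throughout, identifications $T_F \C^{d \times N} \approx \C^{d \times N}$ and $\hermitian(d) \approx \mathfrak{u}(d)^*$ (via $\alpha$ from \eqref{eqn:hermitian_to_unitary_Lie_algebra}) are used without further comment, and recall the standard symplectic form is $\omega_F(X,Y) = -\Im \tr(Y^* X)$.

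First, I would compute the infinitesimal vector field generated by $\xi \in \mathfrak{u}(d)$: since $A \cdot F = AF$, differentiating $\exp(\epsilon \xi)F$ at $\epsilon = 0$ gives $Y_\xi|_F = \xi F$. Next, for $\Phi_{\unitary}(F) = -FF^*$, the derivative at $F$ applied to $X \in T_F \C^{d \times N}$ is the Hermitian matrix $-(XF^* + FX^*)$. Under $\alpha$, this element of $\mathfrak{u}(d)^*$ sends $\xi \in \mathfrak{u}(d)$ to
\[
\alpha_{-(XF^*+FX^*)}(\xi) = \tfrac{\sqrt{-1}}{2}\tr\bigl(\xi^*(-XF^* - FX^*)\bigr) = \tfrac{\sqrt{-1}}{2}\bigl(\tr(\xi X F^*) + \tr(\xi F X^*)\bigr),
\]
using $\xi^* = -\xi$. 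A parallel manipulation of $\omega_F(Y_\xi|_F, X) = -\Im \tr(X^* \xi F)$---writing the imaginary part as $\frac{1}{2\sqrt{-1}}(\tr(X^* \xi F) - \overline{\tr(X^* \xi F)})$, using $\xi^* = -\xi$ and cyclicity---yields exactly the same expression, establishing the momentum map condition.

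Next, I would verify equivariance. For $A \in \unitary$, a direct computation gives
\[
\Phi_{\unitary}(A \cdot F) = -(AF)(AF)^* = A(-FF^*)A^* = \mathrm{Ad}_A(\Phi_{\unitary}(F)),
\]
which is equivariance with respect to the adjoint action on $\hermitian(d)$. By \Cref{lem:equivariance}, this is equivalent to coadjoint equivariance under $\alpha$, so $\Phi_{\unitary}$ is a bona fide momentum map and the $\unitary$-action on $\C^{d \times N}$ is Hamiltonian.

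Finally, I would identify the level set. Since all $\lambda_i > 0$, the set of $F$ with $-FF^*$ having spectrum $-\blam$ is precisely the set of matrices $F$ with $FF^*$ having spectrum $\blam$---which, since this forces $F$ to have rank $d$, is exactly $\frames_{\blam}$. Combined with \Cref{lem:equivariance} (which identifies coadjoint orbits with fixed-spectrum loci in $\hermitian(d)$), this gives $\Phi_{\unitary}^{-1}(\mathcal{O}_{-\blam}) = \frames_{\blam}$, whence
\[
\C^{d \times N} \sslash_{\mathcal{O}_{-\blam}} \unitary = \Phi_{\unitary}^{-1}(\mathcal{O}_{-\blam})/\unitary = \frames_{\blam}/\unitary.
\]
Applying \Cref{thm:sjamaar_lerman} endows this quotient with a natural symplectic (stratified) structure. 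The main obstacle is really just bookkeeping: carefully tracking the factors of $\sqrt{-1}$ and signs arising from the isomorphism $\alpha$ and the sign convention in $\omega$, so that the verification of the defining identity $D\Phi_{\unitary}(F)(X)(\xi) = \omega_F(Y_\xi|_F, X)$ goes through cleanly.
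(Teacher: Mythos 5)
Your verification of the momentum map identity is correct: the computation that $\alpha_{D\Phi_{\unitary}(F)(X)}(\xi) = \frac{\sqrt{-1}}{2}\bigl(\tr(\xi XF^*) + \tr(\xi FX^*)\bigr)$ matches $\omega_F(\xi F, X) = -\Im\tr(X^*\xi F)$ after expanding $\Im$ and using $\xi^* = -\xi$ with cyclicity of trace, and the equivariance and level-set identification are both clean. This is the same basic strategy as the paper's proof (which outsources the arithmetic to the authors' earlier paper and presents only a sketch).

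The one place you come up slightly short is the final step. By invoking \Cref{thm:sjamaar_lerman} you conclude that $\frames_{\blam}/\unitary$ is a symplectic \emph{stratified space}, which is weaker than what the proposition and the paper actually assert. The paper observes that every $-FF^*$ with $F$ of rank $d$ is a \emph{regular value} of $\Phi_{\unitary}$: the only unitary matrix fixing a spanning set of $\C^d$ is the identity, so the $\unitary$-action on $\frames_{\blam}$ is free, and by \Cref{prop:regular points of momentum maps} the points of $\frames_{\blam}$ are all regular points of $\Phi_{\unitary}$. Since the entries of $\blam$ are positive, this puts one squarely in the hypotheses of \Cref{thm:marsden_weinstein_coadjoint_orbit}, which yields that $\frames_{\blam}/\unitary$ is a bona fide symplectic \emph{manifold}. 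You should swap in this argument; it is no longer than the Sjamaar--Lerman appeal, and it is needed later (e.g.\ in \Cref{prop:diffeomorphic_to_flag}, which identifies $\frames_{\blam}/\unitary$ with a flag manifold).
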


\begin{proof}[Proof Sketch]
We sketch the proof, since the constructions involved will be useful later. For details, see our previous paper~\cite{NeedhamSGC}. 

The fact that $\Phi_{\unitary}:F \mapsto -F F^\ast$ is a momentum map (where we are specifically fixing the isomorphism \eqref{eqn:hermitian_to_unitary_Lie_algebra}) follows by a computation, as shown in \cite[Proposition 2]{NeedhamSGC}. We remark that our momentum map differs by a sign in the present paper, due to slightly different conventions. The fact that $\frames_{\blam} = \Phi_{\unitary}^{-1}(\mathcal{O}_{-\blam})$ follows by \Cref{lem:equivariance}. Moreover, the negative-definite Hermitian matrices are the regular values of $\Phi_{\unitary}$ in $\hermitian(d)$; we proved this by slightly tedious but essentially straightforward calculation in~\cite{NeedhamSGC}, but it also follows easily from \Cref{prop:regular points of momentum maps} below, since the only unitary matrix which acts trivially on a spanning set of $\C^d$ is the identity matrix. Therefore, the entries in $\blam$ being positive means that \Cref{thm:marsden_weinstein_coadjoint_orbit} applies and
\[
	\C^{d \times N} \sslash_{\mathcal{O}_{-\blam}} \unitary = \Phi_{\unitary}^{-1}(\mathcal{O}_{-\blam})/\unitary = \frames_{\blam}/\unitary
\]
is a symplectic manifold.
\end{proof}

\subsubsection{Identification with a Coadjoint Orbit}

The symplectic structure of $\frames_{\blam}/\unitary$ can alternatively be realized by identifying the frame space with a certain coadjoint orbit in the larger space of Hermitian matrices $\hermitian(N)$. This perspective will be useful in the following subsection. In order to state the result precisely, we introduce some more terminology.

It is well known that any coadjoint orbit has a natural symplectic structure, called the \emph{Kirillov--Kostant--Souriau (KKS) symplectic form}~\cite[II.3.c]{Audin:2004bh}), which we denote generically as $\omega^{\mathrm{KKS}}$. Indeed, for an arbitrary Lie group $G$ with Lie algebra $\mathfrak{g}$, the tangent space to a coadjoint orbit $\mathcal{O}_\beta$ at $\beta \in \mathfrak{g}^\ast$ consists of vectors of the form $\mathrm{ad}_\xi^\ast \beta$, where $\xi \in \mathfrak{g}$, and where $\mathrm{ad}_\xi^\ast$ is the coadjoint representation of $\mathfrak{g}$ on $\mathfrak{g}^\ast$, obtained as the derivative of the map $\mathrm{Ad}^\ast: G \to \mathrm{Aut}(\mathfrak{g}^\ast)$ at the identity. Then the KKS form is defined by
\begin{equation}\label{eq:KKS}
\omega^{\mathrm{KKS}}_\beta(\mathrm{ad}_\xi^\ast \beta, \mathrm{ad}_{\xi'}^\ast \beta) := \beta([\xi,\xi']),
\end{equation}
where $[\cdot,\cdot]$ is the Lie bracket on $\mathfrak{g}$. 

By another slight abuse of terminology, we define the \emph{adjoint representation} of $\mathrm{u}(d)$ on $\hermitian(d)$ by
\begin{align*}
        \mathrm{ad}:\mathfrak{u}(d) \times \hermitian(d) &\to \hermitian(d) \\
        (\eta,\xi) &\mapsto \mathrm{ad}_\eta(\xi) := \eta \xi - \xi \eta.
    \end{align*}
An argument similar to that of \Cref{lem:equivariance} proves the following.

\begin{lem}
    The map $\alpha$ defined in \eqref{eqn:hermitian_to_unitary_Lie_algebra} is equivariant with respect to the adjoint representation of $\mathfrak{u}(d)$ on $\hermitian(d)$ and its coadjoint representation on $\mathfrak{u}(d)^\ast$. That is,
    \[
    \alpha_{\mathrm{ad}_\eta(\xi)} = \mathrm{ad}^\ast_\eta \alpha_\xi.
    \]
\end{lem}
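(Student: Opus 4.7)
The plan is to prove this by direct computation, in essentially the same style as the proof of \Cref{lem:equivariance} for the group-level equivariance. Fix $\eta, \zeta \in \mathfrak{u}(d)$ and $\xi \in \hermitian(d)$. Using the definition $\alpha_\xi(\zeta) = \tfrac{\sqrt{-1}}{2}\tr(\zeta^\ast \xi)$ and the definition $\mathrm{ad}_\eta(\xi) = \eta\xi - \xi\eta$, I would first expand
\[
\alpha_{\mathrm{ad}_\eta(\xi)}(\zeta) = \tfrac{\sqrt{-1}}{2}\tr\!\bigl(\zeta^\ast(\eta\xi - \xi\eta)\bigr)
\]
and move the $\eta$ factors off of $\xi$ via cyclic invariance of trace to rewrite this as $\tfrac{\sqrt{-1}}{2}\tr\!\bigl((\zeta^\ast\eta - \eta\zeta^\ast)\xi\bigr)$. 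Applying the anti-Hermitian relation $\zeta^\ast = -\zeta$ turns the inner parenthesized factor into the Lie-bracket-type expression $\eta\zeta - \zeta\eta = \mathrm{ad}_\eta(\zeta)$.

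Next, I would compare this with the coadjoint side. By differentiating the defining equation $\Ad_g^\ast(\chi)(\xi) = \chi(\Ad_{g^{-1}}(\xi))$ at the identity along $g = \exp(t\eta)$, one gets the standard identity
\[
\mathrm{ad}^\ast_\eta \chi(\zeta) = -\chi(\mathrm{ad}_\eta \zeta),
\]
so that $\mathrm{ad}^\ast_\eta \alpha_\xi(\zeta) = -\alpha_\xi(\mathrm{ad}_\eta \zeta) = -\tfrac{\sqrt{-1}}{2}\tr\!\bigl((\mathrm{ad}_\eta\zeta)^\ast \xi\bigr)$. Since $\mathrm{ad}_\eta\zeta$ again lies in $\mathfrak{u}(d)$, we have $(\mathrm{ad}_\eta\zeta)^\ast = -\mathrm{ad}_\eta\zeta$, and the two sign flips combine to show that this coincides with the expression obtained in the first step. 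This would complete the verification of $\alpha_{\mathrm{ad}_\eta(\xi)} = \mathrm{ad}^\ast_\eta \alpha_\xi$.

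As a slicker alternative, I could simply differentiate the identity $\alpha_{\Ad_A \xi} = \Ad_A^\ast \alpha_\xi$ from \Cref{lem:equivariance} along the curve $A(t) = \exp(t\eta)$ at $t = 0$: the left side yields $\alpha_{\mathrm{ad}_\eta(\xi)}$ by the chain rule and linearity of $\alpha$ in its subscript, while the right side yields $\mathrm{ad}^\ast_\eta \alpha_\xi$ by definition of the coadjoint representation as the derivative of $\Ad^\ast$ at the identity. This is the cleanest route and avoids any explicit sign bookkeeping. The only potential obstacle is checking that the derivative of $\xi \mapsto \alpha_\xi$ is well-behaved (it is, since $\alpha$ is an $\R$-linear isomorphism), but this is immediate and so I anticipate no genuine difficulties in the proof.
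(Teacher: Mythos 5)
Your proof is correct, and your primary computation is exactly the kind of ``argument similar to that of Lemma \ref{lem:equivariance}'' that the paper gestures at without writing out: expand $\alpha_{\mathrm{ad}_\eta(\xi)}(\zeta)$ by linearity of the trace, use cyclicity to move $\eta$ next to $\zeta^\ast$, invoke $\zeta^\ast = -\zeta$ to recognize $\mathrm{ad}_\eta(\zeta)$, and match against the standard formula $\mathrm{ad}^\ast_\eta\chi(\zeta) = -\chi(\mathrm{ad}_\eta\zeta)$ together with the skew-Hermitian identity $(\mathrm{ad}_\eta\zeta)^\ast = -\mathrm{ad}_\eta\zeta$. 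The sign bookkeeping checks out. Your slicker alternative---differentiating the group-level identity $\alpha_{\Ad_A\xi} = \Ad_A^\ast\alpha_\xi$ along $A(t) = \exp(t\eta)$---is also valid and arguably the cleanest route, since $\alpha$ is linear and the infinitesimal (co)adjoint representations are by definition the derivatives of the group-level actions at the identity; it avoids the explicit trace manipulation entirely and derives the lemma from the already-established Lemma \ref{lem:equivariance}. Both routes are sound; the first is what the paper's remark points to, and the second is a legitimate shortcut.
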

    
The adjoint representation  of $\mathrm{u}(d)$ on $\hermitian(d)$ is obtained by differentiating the adjoint action of $\mathrm{U}(d)$ on $\hermitian(d)$. It follows that the tangent space to $\mathcal{O}_{-\blam} \subset \hermitian(d)$ at $\xi$ consists of vectors of the form $\mathrm{ad}_\eta(\xi)$ for $\eta \in \mathfrak{u}(d)$. 

\begin{lem}\label{lem:pullback}
    The pullback of $\omega^{\mathrm{KKS}}$ to $\mathcal{O}_{-\blam} \subset \hermitian(d)$ via $\alpha$ is given by
    \[
    \left(\alpha^\ast \omega^{\mathrm{KKS}}\right)_\xi (\mathrm{ad}_\eta(\xi), \mathrm{ad}_{\eta'}(\xi)) = \mathrm{Im} \; \mathrm{tr}(\xi \eta \eta').
    \]
\end{lem}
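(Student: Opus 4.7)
The plan is to unpack the pullback directly, then pass to the coadjoint side via the equivariance statement of the preceding (unnamed) lemma, apply the KKS definition \eqref{eq:KKS}, and finally convert back to a trace expression on $\hermitian(d)$.

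First I would observe that since $\alpha:\hermitian(d)\to\mathfrak{u}(d)^*$ is linear, its derivative at any $\xi$ is $\alpha$ itself. Hence
\[
(\alpha^*\omega^{\mathrm{KKS}})_\xi(\mathrm{ad}_\eta(\xi),\mathrm{ad}_{\eta'}(\xi)) = \omega^{\mathrm{KKS}}_{\alpha_\xi}\bigl(\alpha_{\mathrm{ad}_\eta(\xi)},\,\alpha_{\mathrm{ad}_{\eta'}(\xi)}\bigr).
\]
By the previous lemma, $\alpha_{\mathrm{ad}_\eta(\xi)} = \mathrm{ad}^*_\eta\alpha_\xi$ and likewise for $\eta'$. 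Applying the KKS formula \eqref{eq:KKS} with $\beta = \alpha_\xi$, the right-hand side collapses to $\alpha_\xi([\eta,\eta'])$.

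Next I would evaluate $\alpha_\xi([\eta,\eta'])$ using the definition \eqref{eqn:hermitian_to_unitary_Lie_algebra}. Since $\eta,\eta'\in\mathfrak{u}(d)$ are skew-Hermitian, one checks $(\eta\eta')^*=\eta'\eta$, so $[\eta,\eta']^* = -[\eta,\eta']$ and
\[
\alpha_\xi([\eta,\eta']) = \tfrac{\sqrt{-1}}{2}\tr([\eta,\eta']^*\xi) = -\tfrac{\sqrt{-1}}{2}\tr(\xi[\eta,\eta']).
\]
Finally, to identify this with the claimed imaginary part, I would use the same skew-Hermitian identity to compute $\overline{\tr(\xi\eta\eta')} = \tr((\xi\eta\eta')^*) = \tr(\eta'\eta\xi) = \tr(\xi\eta'\eta)$, using the cyclic property of the trace in the last step. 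Therefore
\[
\tr(\xi\eta\eta') - \overline{\tr(\xi\eta\eta')} = \tr(\xi[\eta,\eta']),
\]
which gives $\Im\tr(\xi\eta\eta') = \tfrac{1}{2\sqrt{-1}}\tr(\xi[\eta,\eta']) = -\tfrac{\sqrt{-1}}{2}\tr(\xi[\eta,\eta'])$, matching $\alpha_\xi([\eta,\eta'])$.

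The calculation is essentially routine once the equivariance lemma has been invoked; the only place where one has to be careful is the bookkeeping of signs and factors of $\sqrt{-1}$ when converting between the pairing $\tfrac{\sqrt{-1}}{2}\tr(\eta^*\xi)$ that defines $\alpha$ and the imaginary-part normalization on the right-hand side. In particular, the skew-Hermitian identity $(\eta\eta')^*=\eta'\eta$ is used twice—once to rewrite $[\eta,\eta']^*$ and once to identify $\overline{\tr(\xi\eta\eta')}$—and getting the sign conventions to align is the main (mild) obstacle.
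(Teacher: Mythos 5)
Your proof is correct and follows essentially the same route as the paper: linearity of $\alpha$, equivariance from the preceding lemma, the KKS formula, and then the trace computation using skew-Hermiticity. The only difference is stylistic — you unpack the conjugate/imaginary-part step via $z-\bar z = 2\sqrt{-1}\,\Im z$ explicitly, whereas the paper compresses this into a single line.
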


\begin{proof}
    We have
    \begin{align*}
        \left(\alpha^\ast \omega^{\mathrm{KKS}}\right)_\xi (\mathrm{ad}_\eta(\xi), \mathrm{ad}_{\eta'}(\xi)) &= \omega^{\mathrm{KKS}}_{\alpha_\xi} (D\alpha(\xi)(\mathrm{ad}_\eta(\xi)), D\alpha(\xi)(\mathrm{ad}_{\eta'}(\xi))) = \omega^{\mathrm{KKS}}_{\alpha_\xi}(\alpha_{\mathrm{ad}_\eta(\xi)}, \alpha_{\mathrm{ad}_{\eta'}(\xi)}) \\
        &= \omega^{\mathrm{KKS}}_{\alpha_\xi}(\mathrm{ad}^\ast_\eta \alpha_\xi, \mathrm{ad}^\ast_{\eta'} \alpha_\xi) = \alpha_\xi([\eta,\eta']) \\
        &= \frac{\sqrt{-1}}{2} \mathrm{tr} ((\eta \eta' - \eta' \eta)^\ast \xi) = \frac{\sqrt{-1}}{2} \cdot -2\sqrt{-1} \cdot \mathrm{Im} \; \mathrm{tr}(\xi \eta \eta') \\
        &= \mathrm{Im} \; \mathrm{tr}(\xi \eta \eta'),
    \end{align*}
where the second-to-last line uses cyclic invariance and linearity of trace. 
\end{proof}

We can now describe precisely how $\frames_{\blam}/\unitary$ is equivalent to a coadjoint orbit in $\hermitian(N)$. 

\begin{prop}[\cite{NeedhamSGC}]\label{prop:diffeomorphic_to_flag}
    Let $\blam$ be a frame spectrum and define $\widetilde{\blam} := (\lambda_1,\ldots,\lambda_d,0,\ldots,0) \in \R^N$ to be $\blam$ padded with $N-d$ zeros. The space $\frames_{\blam}/\unitary$ is diffeomorphic to $\mathcal{O}_{\widetilde{\blam}} \subset \hermitian(N)$, via the map
    \begin{equation}\label{eqn:gram_matrix_map}
    [F] \mapsto F^\ast F
    \end{equation}
    taking a unitary equivalence class to its Gram matrix. 
    
    Moreover, the map \eqref{eqn:gram_matrix_map} is a symplectomorphism with respect to the reduced symplectic structure on $\frames_{\blam}/\unitary$ and the pullback of the Kirillov--Kostant--Souriau form $\alpha^\ast \omega^{\mathrm{KKS}}$ described in \Cref{lem:pullback}.
\end{prop}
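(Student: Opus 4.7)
The plan is to prove the two claims in sequence: first that $\phi: [F] \mapsto F^\ast F$ is a diffeomorphism from $\frames_{\blam}/\unitary$ onto $\mathcal{O}_{\widetilde{\blam}} \subset \hermitian(N)$, and then that it intertwines the reduced symplectic form with $\alpha^\ast \omega^{\mathrm{KKS}}$.

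For the diffeomorphism, I would first verify well-definedness: the Gram matrix is manifestly invariant under the left $\unitary$-action $F \mapsto UF$, and since $F \in \frames_{\blam}$ has rank $d$, the Hermitian matrix $F^\ast F$ has the same nonzero spectrum as $FF^\ast$ (namely $\lambda_1, \ldots, \lambda_d$) together with $N-d$ zero eigenvalues, so it lies in $\mathcal{O}_{\widetilde{\blam}}$. Injectivity on the quotient is a standard consequence of the singular value decomposition: if $F^\ast F = G^\ast G$, then the $U(d)$-ambiguity in the left unitary factor of the SVD on the $d$-dimensional range produces $U \in \unitary$ with $G = UF$. For surjectivity, given $M \in \mathcal{O}_{\widetilde{\blam}}$ with spectral decomposition $M = V \, \mathrm{diag}(\widetilde{\blam}) \, V^\ast$, set $F_0 := \left[\, \mathrm{diag}(\sqrt{\lambda_1}, \ldots, \sqrt{\lambda_d}) \,\middle|\, 0_{d \times (N-d)} \,\right] V^\ast$; then $F_0^\ast F_0 = M$ and $F_0 F_0^\ast = \mathrm{diag}(\blam)$, so $[F_0]$ is a preimage. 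Smoothness of $\phi$ is immediate, and smoothness of the inverse follows once both sides are known to be smooth manifolds of the same dimension, either by the inverse function theorem or because $F \mapsto F^\ast F$ is an equivariant submersion with $\unitary$-orbit fibers.

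For the symplectomorphism, I would exploit the characterizing property of $\omega_{red}$ from \Cref{thm:marsden_weinstein_coadjoint_orbit}: it is the unique form on $\frames_{\blam}/\unitary$ satisfying $q^\ast \omega_{red} = \iota^\ast \omega$, where $\omega$ is the standard form on $\C^{d \times N}$. It therefore suffices to show that the pullback of $\alpha^\ast \omega^{\mathrm{KKS}}$ through the composition $\Psi := \phi \circ q: F \mapsto F^\ast F$ coincides with $\iota^\ast \omega$ on all tangent vectors $X, Y \in T_F \frames_{\blam}$. For such $X$ the derivative is $D\Psi_F(X) = X^\ast F + F^\ast X$, and I would exhibit an explicit $\eta_X \in \mathfrak{u}(N)$ (e.g., built from $X$, $F$, and the Moore--Penrose pseudoinverse $F^\dagger$) solving $\mathrm{ad}_{\eta_X}(F^\ast F) = X^\ast F + F^\ast X$. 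Substituting into \Cref{lem:pullback} yields $(\Psi^\ast \alpha^\ast \omega^{\mathrm{KKS}})_F(X,Y) = \mathrm{Im}\,\mathrm{tr}(F^\ast F \, \eta_X \eta_Y)$, and after applying the cyclic invariance of the trace this should simplify to $-\mathrm{Im}\,\mathrm{tr}(Y^\ast X) = \omega_F(X,Y)$.

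The main obstacle will be the algebraic step of writing down a clean $\eta_X$ and pushing through the simplification, which is sensitive to sign conventions and to the fact that $\eta_X$ is only determined modulo the stabilizer of $F^\ast F$ under the adjoint action (the choice of extension to the kernel of $F$ is immaterial for the KKS value but must be made carefully to keep formulas tidy). If this direct approach proves cumbersome, an attractive alternative is to observe that $F \mapsto F^\ast F$ is, up to sign, the momentum map for the right $\mathrm{U}(N)$-action on $\C^{d \times N}$ given by $F \mapsto FV^\ast$, which commutes with the left $\unitary$-action. The identification of the reduced form with the KKS form would then follow from the general principle that the momentum map of a commuting Hamiltonian action descends to a symplectic embedding of the reduced space onto its image coadjoint orbit, equipped with the KKS form.
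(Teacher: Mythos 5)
Your overall strategy coincides with the paper's: establish the diffeomorphism (well-definedness, injectivity via SVD, surjectivity by explicit spectral preimage, smoothness), and for the symplectomorphism use the characterizing property $q^\ast\omega_{red} = \iota^\ast\omega$ of the reduced form together with \Cref{lem:pullback}. The one place where your plan diverges—and where you correctly anticipate friction—is the construction of $\eta_X \in \mathfrak{u}(N)$ with $\mathrm{ad}_{\eta_X}(F^\ast F) = D\Psi_F(X)$. You propose to solve this (essentially a Sylvester equation) using the pseudoinverse $F^\dagger$. The paper sidesteps that entirely by first proving \Cref{lem:tangent_space}: every tangent vector to $\frames_{\blam}$ at $F$ has the form $\xi F - F\zeta$ with $\xi \in \mathfrak{u}(d)$, $\zeta \in \mathfrak{u}(N)$, so after quotienting by the $\unitary$-orbit direction $\{\xi F\}$, a representative can always be taken to be $F\zeta$. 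With that parametrization one immediately has $D\gamma([F])(F\zeta) = F^\ast F\zeta + (F\zeta)^\ast F = \pm\,\mathrm{ad}_\zeta(F^\ast F)$; no pseudoinverse and no Sylvester equation are needed, and your "$\eta_X$" is simply $\zeta$. Your remaining calculation—that $\mathrm{Im}\,\mathrm{tr}(F^\ast F\zeta\zeta')$ matches $-\mathrm{Im}\,\mathrm{tr}((F\zeta')^\ast(F\zeta))$ by cyclicity and skew-Hermiticity of $\zeta'$—is exactly what the paper does. Your alternative route at the end, identifying $F\mapsto F^\ast F$ as the momentum map for the right $\mathrm{U}(N)$-action and appealing to the general fact that a commuting momentum map descends to a symplectic map onto the coadjoint orbit, is also valid and is in fact consonant with the paper's Remark after \Cref{prop:Hamiltonian torus action}; it is arguably cleaner than the direct computation once the requisite general theory is in hand. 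The one concrete improvement to your draft would be to prove the tangent-space decomposition of \Cref{lem:tangent_space} up front rather than reaching for $F^\dagger$: it both eliminates the messiest step of your outline and resolves your worry about the indeterminacy of $\eta_X$ modulo the stabilizer of $F^\ast F$, since the freedom in choosing $\zeta$ corresponds precisely to the directions quotiented out in the reduced tangent space.
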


The proof of the proposition makes use of the following lemma.

\begin{lem}\label{lem:tangent_space}
    Let $[F] \in \frames_{\blam}/\unitary$. The tangent space $T_{[F]} \frames_{\blam}/\unitary$ is naturally isomorphic to the vector space
    \[
    \{F\zeta \mid \zeta \in \mathfrak{u}(N)\}/\{\xi F \mid \xi \in \mathfrak{u}(d)\}.
    \]
\end{lem}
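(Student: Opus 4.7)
The plan is to exploit the fact that $\unitary \times \operatorname{U}(N)$ acts transitively on $\frames_{\blam}$ via $(A,U)\cdot F' = AF'U$---an immediate consequence of the singular value decomposition, since every element of $\frames_{\blam}$ has the same squared singular values $\lambda_1,\ldots,\lambda_d$. The orbit map $(A,U) \mapsto AFU$ then has surjective derivative at the identity, and its image identifies $T_F \frames_{\blam}$ concretely; quotienting by the tangent directions to the $\unitary$-orbit yields the claimed description.

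First I would verify that the action is well-defined on $\frames_{\blam}$: indeed $(AFU)(AFU)^\ast = AFF^\ast A^\ast$ has the same spectrum as $FF^\ast$. Transitivity follows by comparing singular value decompositions $F = W\Sigma V^\ast$ and $F' = W'\Sigma V'^\ast$ of any two elements (with $\Sigma$ the common $d \times N$ matrix of singular values determined by $\blam$), which gives $F' = (W'W^\ast) F (V V'^\ast)$. Since the orbit map is surjective and equivariant, it has constant rank and is therefore a submersion, so its derivative at the identity---namely $(\xi,\zeta) \mapsto \xi F + F\zeta$---surjects onto $T_F \frames_{\blam}$. Hence
\[
T_F \frames_{\blam} = \bigl\{\xi F + F\zeta : \xi \in \mathfrak{u}(d),\ \zeta \in \mathfrak{u}(N)\bigr\}.
\]

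The tangent space to the left $\unitary$-orbit through $F$ is exactly $\{\xi F : \xi \in \mathfrak{u}(d)\}$, so
\[
T_{[F]}\bigl(\frames_{\blam}/\unitary\bigr) \;\cong\; \frac{\{\xi F\} + \{F\zeta\}}{\{\xi F\}}.
\]
By the second isomorphism theorem for vector spaces, this is canonically isomorphic to $\{F\zeta\}/(\{F\zeta\}\cap\{\xi F\})$, which is what the lemma's quotient $\{F\zeta\}/\{\xi F\}$ denotes---equivalently, it is the image of $\{F\zeta\}$ under the projection $T_F \frames_{\blam} \twoheadrightarrow T_{[F]}(\frames_{\blam}/\unitary)$.

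The main obstacle is establishing that the orbit map is a submersion; as sketched, this follows from the singular value decomposition together with the standard observation that a surjective equivariant map between homogeneous spaces of a Lie group has constant rank. A secondary subtlety is interpreting the quotient $\{F\zeta\}/\{\xi F\}$ in the lemma statement, since $\{\xi F : \xi \in \mathfrak{u}(d)\}$ is not literally contained in $\{F\zeta : \zeta \in \mathfrak{u}(N)\}$ in general; the second isomorphism theorem provides the natural reading.
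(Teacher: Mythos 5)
Your proof is correct and ultimately reduces to the same computation as the paper's, but it is organized a bit differently. The paper writes $\frames_{\blam} = \{U\Sigma_\blam V^\ast\}$ and then argues by differentiating a smooth path $F_t = U_t \Sigma_\blam V_t^\ast$; you instead phrase this as transitivity of a $\unitary \times \operatorname{U}(N)$ action and invoke the general fact that the orbit map of a Lie group onto a homogeneous space is a submersion. Your route is a little cleaner on one point: the paper's path argument tacitly assumes that a smooth path in $\frames_{\blam}$ lifts to smooth paths $U_t, V_t$ of unitaries, which is true but not automatic from the SVD (and which the paper doesn't justify), whereas the submersion argument sidesteps the lifting question entirely. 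Two small remarks. First, as written $(A,U)\cdot F' = AF'U$ is not a left action of $\unitary \times \operatorname{U}(N)$; you want $(A,U)\cdot F' = AF'U^\ast$ (or treat the second factor as a right action). This does not affect the image of the derivative at the identity, since $\zeta \mapsto -\zeta$ is a bijection of $\mathfrak{u}(N)$, so your conclusion $T_F\frames_{\blam} = \{\xi F + F\zeta\}$ stands. Second, you are right to flag the interpretation of the quotient $\{F\zeta\}/\{\xi F\}$: since $\{\xi F : \xi \in \mathfrak{u}(d)\}$ is not in general a subspace of $\{F\zeta : \zeta \in \mathfrak{u}(N)\}$, the lemma's quotient is most naturally read, via the second isomorphism theorem, as $\{F\zeta\}/(\{F\zeta\}\cap\{\xi F\})$, equivalently the image of $\{F\zeta\}$ under the projection $T_F\frames_{\blam} \to T_{[F]}(\frames_{\blam}/\unitary)$; the paper leaves this implicit, and spelling it out is an improvement.
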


\begin{proof}
    Any $F \in \frames_{\blam}$ has singular value decomposition of the form $F = U \Sigma_{\blam} V^\ast$, where
    \[
    \Sigma_{\blam} := \left[
    \mathrm{diag}(\blam)^{\frac{1}{2}} \mid 0 \right].
    \]
    Therefore 
    \[
    \frames_{\blam} = \{U\Sigma_{\blam} V^\ast \mid U \in \mathrm{U}(d) \mbox{ and } V \in \mathrm{U}(N)\}.
    \]
    We claim that
    \[
    T_F \frames_{\blam} = \{\xi F - F \zeta \mid \xi \in \mathfrak{u}(d) \mbox{ and } \zeta \in \mathfrak{u}(N)\}.
    \]
    Indeed, a smooth path $F_t$ in $\frames_{\blam}$ with $F_0 = F = U\Sigma_{\blam}V^\ast$ can be expressed as $F_t = U_t \Sigma_{\blam} V_t^\ast$ for some smooth paths $U_t$ in $\mathrm{U}(d)$ with $U_0 = U$ and $V_t \in \mathrm{U}(N)$ with $V_0 = V$. Then the derivative of $F_t$ at $t= 0$ satisfies
    \[
    \dot{F}_0 = \dot{U}_0 \Sigma_{\blam} V_0^\ast + U_0 \Sigma_{\blam} \dot{V}_0^\ast = \xi U \Sigma_{\blam} V^\ast - U \Sigma_{\blam} V^\ast \zeta = \xi F - F \zeta,
    \]
    for $\xi \in \mathfrak{u}(d)$ and $\zeta \in \mathfrak{u}(N)$ satisfying $\dot{U}_0 = \xi U$ and $\dot{V}_0 = \zeta V$, respectively.
    
    By general principles of quotient manifolds, there is a natural isomorphism
    \[
    T_{[F]} \left(\frames_{\blam}/\unitary\right) \approx T_F \frames_{\blam} / T_F (\mathrm{U(d)}\cdot F).
    \]
    Indeed, the quotient map $\frames_{\blam} \to \frames_{\blam}/\mathrm{U}(d)$ is a submersion, so the rank theorem \cite[Theorem 4.12]{Lee:2013bn} implies that, locally near $F$, it is an orthogonal projection with the fiber $\mathrm{U}(d)\cdot F$ being sent to the origin. This gives $T_F \frames_{\blam} \approx T_F (\mathrm{U(d)}\cdot F) \oplus T_{[F]} \left(\frames_{\blam}/\unitary\right)$, and the claim follows. Since $T_F (\mathrm{U(d)}\cdot F) = \{\xi F \mid \xi \in \mathfrak{u}(d)\}$, the lemma follows.
 \end{proof}

\begin{proof}[Proof of \Cref{prop:diffeomorphic_to_flag}]
    The diffeomorphism result is Proposition~3 in our previous paper \cite{NeedhamSGC}, so we only sketch the details here. Unitary equivalence classes of frames are uniquely determined by their Gram matrices (see, for example,~\cite[\S 3.4]{Waldron:2018wc}), so we can identify $\frames_{\blam}/\unitary$ with the elements of $\hermitian(N)$ which arise as Gram matrices of frames in $\frames_{\blam}$. Of course, the frame operator $FF^\ast$ and the Gram matrix $F^\ast F$ have the same nonzero eigenvalues, so specifying the spectrum of the frame operator also determines the spectrum of the Gram matrix by padding with $N-d$ zeros, and it follows that this subset consists of those $N \times N$ Hermitian matrices with spectrum $\widetilde{\blam}$. In other words, \eqref{eqn:gram_matrix_map} is a diffeomorphism between $\frames_{\blam}/\unitary$ and $\mathcal{O}_{\widetilde{\blam}}$.
    
    It remains to prove the symplectomorphism claim. For the rest of this proof, let $\gamma$ denote the map \eqref{eqn:gram_matrix_map}. By \Cref{lem:tangent_space}, the tangent space to $\frames_{\blam}/\unitary$ at $[F]$ consists of equivalence classes (under the quotient operation in the lemma) $[F\zeta]$, where $\zeta \in \mathfrak{u}(N)$. The derivative of $\gamma$ at $[F]$ in the direction $[F\zeta]$ is given by 
    \[
    D\gamma([F])([F\zeta]) = F^\ast F\zeta + (F\zeta)^\ast F = F^\ast F \zeta -  \zeta F^\ast F = \mathrm{ad}_\zeta(F^\ast F).
    \]
    Then, using \Cref{lem:pullback},
\begin{align*}
    \left(\gamma^\ast \alpha^\ast \omega^{\mathrm{KKS}}\right)_{[F]}(F\zeta,F\zeta') &= \left(\alpha^\ast \omega^{\mathrm{KKS}}\right)_{F^\ast F}(\mathrm{ad}_\zeta(F^\ast F), \mathrm{ad}_{\zeta'}(F^\ast F)) = \mathrm{Im} \; \mathrm{tr} (F^\ast F \zeta \zeta').
\end{align*}
On the other hand, the reduced symplectic form on $\frames_{\blam}/\unitary$ evaluates as 
\[
-\mathrm{Im} \; \mathrm{tr} ((F\zeta')^\ast (F\zeta)) = -\mathrm{Im} \; \mathrm{tr} (F^\ast F \zeta (\zeta')^\ast) = \mathrm{Im} \; \mathrm{tr} (F^\ast F \zeta \zeta'),
\]
where we have used the characterization of the reduced form, cyclic invariance of trace and the fact that $\zeta'$ is skew-Hermitian.
\end{proof}

\begin{remark}\label{remark:flag}
    Since $\mathcal{O}_{\widetilde{\blam}} \subset \hermitian(N)$ is identified with a coadjoint orbit, it is diffeomorphic, like all coadjoint orbits, to a flag manifold~\cite[\S II.1.d]{Audin:2004bh}. Specifically, if $\blam$ consists of $\ell$ distinct eigenvalues with multiplicities $k_1,\dots, k_\ell$ and $d_i=k_1 + \dots + k_i$ for $i=1,\dots, \ell$, then
\[
	\mathcal{O}_{\widetilde{\blam}} \approx \Fl(d_1,\dots,d_\ell,N),
\]
the flag manifold whose elements are nested sequences $V_1 \subset \dots \subset V_\ell \subset V_{\ell+1} = \C^N$ of subspaces with $\dim V_i = d_i$. In particular, when $\blam$ is constant then $\ell=1$ and $k_1=d$, so the flag manifold is $\Fl(d,N)$, whose elements are all subspaces $V_1 \subset \C^N$ with $\dim V_1=d$; that is, the Grassmannian $\Gr$ of $d$-dimensional subspaces of $\C^N$.
\end{remark}

\subsubsection{Frames with Prescribed Frame Spectrum and Norms}
\label{sec:prescribed norms and spectrum}

The torus $\torus^N$ can be realized as the subgroup of diagonal elements of $\operatorname{U}(N)$; that is, as the standard maximal torus $\bt \leq \operatorname{U}(N)$. Thought of in this way, $\torus^N \approx \bt$ acts on $\C^{d \times N}$ by right matrix multiplication and, as we will see in a moment, this action preserves the frame spectrum, and hence restricts to an action on $\frames_{\blam}$.

First, though, it is worthwhile to pause and think about the right action of the full unitary group $\operatorname{U}(N)$ on $\C^{d \times N}$. If we try to define the action of $A \in \operatorname{U}(N)$ on $F \in \C^{d \times N}$ by $A \cdot F = F A$ we quickly run into problems: after all, if $A_1, A_2 \in \operatorname{U}(N)$, then 
\[
	A_1 \cdot (A_2 \cdot F) = A_1 \cdot (F A_2) = F A_2 A_1 \neq F A_1 A_2 = (A_1 A_2) \cdot F,
\]
unless $A_1$ and $A_2$ happen to commute. This is not an issue at the level of $\bt$, which is abelian, but it is preferable to define the $\bt$ action to be consistent with an honest $\operatorname{U}(N)$ action. We get such an action by defining $A \cdot F := F A^\ast$ for $A \in \operatorname{U}(N)$ and $F \in \C^{d \times N}$. In particular, if $D \in \bt$, define $D \cdot F := F D^\ast = F \overline{D}$.

This action preserves the frame spectrum: if $F \in \frames_{\blam}$ and $D \in \bt$, then the frame operator of $D \cdot F = F D^\ast$ is $(FD^\ast)(FD^\ast)^\ast = FD^\ast D F^\ast = FF^\ast$, which is the same as the frame operator of $F$, and hence $D \cdot F \in \frames_{\blam}$ as well.

At the frame level, the action of an element of the torus on a frame performs an independent phase rotation on each frame vector. The right action of $\bt$ commutes with the left action of $\unitary$, so the torus action descends to the quotient $\frames_{\blam}/\unitary$, but it is not an effective action. After all, both $\bt$ and $\unitary$ contain a 1-parameter subgroup of scalar matrices, and the actions of these two subgroups cancel each other: if $\Id_k$ denotes the $k \times k$ identity matrix and $e^{i \theta} \in \torus$, then $e^{i\theta}\Id_d \in\unitary$, $e^{i\theta} \Id_N\in \bt$ and for any $F \in \frames_{\blam}$ we have 
\[
	(e^{i\theta}\Id_N) \cdot ((e^{i \theta}\Id_d) F) = (e^{i \theta}\Id_d) F (e^{-i\theta}\Id_N) = F.
\]

We can put this in a more standard context using our identification of $\frames_{\blam}/\unitary$ with the coadjoint orbit $\mathcal{O}_{\widetilde{\blam}} \subset \hermitian(N) \approx \mathfrak{u}(N)^\ast$, described in \Cref{prop:diffeomorphic_to_flag}. The standard maximal torus of $\operatorname{U}(N)$ certainly acts on any coadjoint orbit of $\operatorname{U}(N)$, and in this case the action is just the conjugation action. The scalar matrices are the center $\zu$ of $\operatorname{U}(N)$, whose coadjoint action is trivial. In general, to get an effective action of the maximal torus of a Lie group on a coadjoint orbit of the group, one needs to take the quotient of the torus by the center of the group; in this case, this is simply
\[
	G := \bt/\zu \approx \torus^N/\torus \approx \torus^{N-1},
\]
which can be identified with the subgroup of diagonal elements of $\operatorname{U}(N)$ whose last entry is 1.

Now we see why it was worth defining the action of $\torus^N$ (and hence $G$) on $\frames_{\blam}$ by $D \cdot F = F D^\ast$, since this corresponds exactly to the standard coadjoint action on $\hermitian(N) \approx \mathfrak{u}(N)^\ast$.

In this setting, it is easy to see that the action of $G$ on $\mathcal{O}_{\widetilde{\blam}} \approx \frames_{\blam}/\unitary$ is Hamiltonian:

\begin{prop}\label{prop:Hamiltonian torus action}
	The action of $G$ on $\mathcal{O}_{\widetilde{\blam}}$ is Hamiltonian, with momentum map $\Phi_G: \mathcal{O}_{\widetilde{\blam}} \to \mathfrak{g}^\ast \approx \R^{N-1}$ recording the first $N-1$ diagonal entries.
\end{prop}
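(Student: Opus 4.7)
The plan is to obtain this proposition by combining two well-known general facts with an explicit computation under the identification $\alpha:\hermitian(N)\to\mathfrak{u}(N)^\ast$ introduced earlier in the paper.

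The first ingredient is the classical fact (see~\cite[\S II.3.c]{Audin:2004bh}) that the coadjoint action of any Lie group on one of its coadjoint orbits is Hamiltonian with respect to the KKS symplectic form, with momentum map equal to the tautological inclusion $\mathcal{O}\hookrightarrow\mathfrak{g}^\ast$. Applied to $\operatorname{U}(N)$ acting on $\mathcal{O}_{\widetilde{\blam}}\subset\hermitian(N)\approx\mathfrak{u}(N)^\ast$ by conjugation, this gives a momentum map which is (essentially) just the inclusion.

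The second ingredient is the standard restriction principle: if $H\le G$ is a closed subgroup and $\Phi\colon M\to\mathfrak{g}^\ast$ is a momentum map for a Hamiltonian $G$-action, then the restricted $H$-action is Hamiltonian with momentum map $\pi\circ\Phi$, where $\pi\colon\mathfrak{g}^\ast\to\mathfrak{h}^\ast$ is dual to the Lie algebra inclusion. I would first apply this with $H=\bt$. Using the explicit formula $\alpha_\xi(\eta)=\frac{\sqrt{-1}}{2}\operatorname{tr}(\eta^\ast\xi)$ and the fact that elements of $\mathfrak{t}$ are diagonal purely imaginary matrices, the resulting momentum map records (up to an overall factor) the diagonal entries $\xi_{11},\dots,\xi_{NN}$ of $\xi$. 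Next, I would apply the restriction principle again, now with $G\le\bt$ identified as the subgroup of diagonal unitaries whose last entry is $1$. Its Lie algebra $\mathfrak{g}$ is then the subspace of $\mathfrak{t}$ whose last entry vanishes, and the induced projection $\mathfrak{t}^\ast\to\mathfrak{g}^\ast\approx\R^{N-1}$ simply forgets the last coordinate. Composing, one obtains exactly the first $N-1$ diagonal entries, as claimed.

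To close the argument, I would verify that the $\bt$-action really does descend to $G$: the center $\zu$ acts on $\mathcal{O}_{\widetilde{\blam}}$ by conjugation by scalar matrices, which is trivial, so the action factors through $\bt/\zu=G$. Moreover, the last diagonal entry is redundant on $\mathcal{O}_{\widetilde{\blam}}$, since the trace is fixed at $\sum_j\widetilde{\lambda}_j=\sum_i\lambda_i$, so discarding it loses no genuine information. Equivariance of the resulting momentum map is automatic: $G$ is abelian, its coadjoint action on $\mathfrak{g}^\ast$ is trivial, and the diagonal entries of $\xi$ are manifestly invariant under conjugation by diagonal unitaries. The only real obstacle here is bookkeeping: one must keep the sign and factor-of-two conventions in $\alpha$ aligned with the chosen identification $\mathfrak{g}^\ast\approx\R^{N-1}$ so that the final momentum map matches the diagonal entries on the nose, rather than some rescaled or shifted version. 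There is no deep technical difficulty.
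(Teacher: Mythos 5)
Your proposal matches the paper's proof essentially step for step: both invoke the standard fact that a coadjoint orbit is Hamiltonian for the coadjoint action with momentum map the inclusion, then apply the subgroup-restriction principle first to $\bt$ (yielding the diagonal entries under the identification $\alpha$) and then to $G$, realized as the diagonal unitaries with last entry $1$, so that the final projection $\mathfrak{u}(N)^\ast \to \mathfrak{g}^\ast \approx \R^{N-1}$ simply drops the last (trace-determined) coordinate. The extra remarks you make about the action factoring through $\bt/\zu$ and about sign/normalization bookkeeping are fine but do not change the substance; the argument is the same as the paper's.
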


\begin{proof}
	First of all, it is standard (see, e.g.,~\cite[Example~5.3.11]{mcduff2017introduction}) that the coadjoint action of $\operatorname{U}(N)$ on the coadjoint orbit $\mathcal{O}_{\widetilde{\blam}}$ is Hamiltonian with momentum map given by the inclusion $\mathcal{O}_{\widetilde{\blam}} \hookrightarrow \mathfrak{u}(N)^\ast \approx \hermitian(N)$; in fact, the analogous statement holds for arbitrary compact Lie groups acting on coadjoint orbits.
	
	Similarly, it is standard (see, e.g.,~\cite[Proposition~III.1.10]{Audin:2004bh}) that the action of the maximal torus $\bt \leq \operatorname{U}(N)$ on $\mathcal{O}_{\widetilde{\blam}}$ is Hamiltonian with momentum map given by the composition
	\[
		\mathcal{O}_{\widetilde{\blam}} \hookrightarrow \hermitian(N) \approx \mathfrak{u}(N)^\ast \to \mathfrak{t}^\ast,
	\]
	where $\mathfrak{t}^\ast$ is the Lie algebra of $\bt$ and the projection $\mathfrak{u}(N)^\ast \to \mathfrak{t}^\ast$ is the one induced by the inclusion $\bt \hookrightarrow \operatorname{U}(N)$. After identifying $\mathfrak{t}^\ast$ with $\R^N$, this projection is easily seen to be the map which records the diagonal entries of a Hermitian matrix.\footnote{It is worth pointing out that this is precisely the setup for the symplectic proof of the Schur--Horn theorem; see Knutson's excellent paper~\cite{Knutson:2000tg} for more.}
	
	More generally, the same applies for any subgroup (see, e.g.,~\cite[p.~213]{CannasdaSilva:2001cg}), and so the action of $G$ on $\mathcal{O}_{\widetilde{\blam}}$ is Hamiltonian with momentum map given by the composition
	\[
		\mathcal{O}_{\widetilde{\blam}} \hookrightarrow \hermitian(N) \approx \mathfrak{u}(N)^\ast \to \mathfrak{g}^\ast,
	\]
	where the projection $\mathfrak{u}(N)^\ast \to \mathfrak{g}^\ast$ is the one induced by the inclusion $G \hookrightarrow \operatorname{U}(N)$. Under the identification $\mathfrak{g}^\ast \approx \R^{N-1}$, this map just records all the diagonal entries but the last one (which in any case is determined by the others, since the trace of any element of $\mathcal{O}_{\widetilde{\blam}}$ is $\sum_i \lambda_i$).
\end{proof}

\begin{remark}
    It is easy to show that the map \eqref{eqn:gram_matrix_map} from $\frames_{\blam}/\unitary$ to $\hermitian(N)$ is equivariant with respect to the right multiplication action of $\mathrm{U}(N)$, $A \cdot [F] = [F A^\ast]$, and the $\mathrm{Ad}$-action of $\mathrm{U}(N)$ on $\hermitian(N)$. This observation, together with \Cref{prop:diffeomorphic_to_flag} and the first paragraph of the proof of \Cref{prop:Hamiltonian torus action}, shows that the action of $\mathrm{U}(N)$ on $\frames_{\blam}/\unitary$ is Hamiltonian, with momentum map $[F] \to F^\ast F$. 
\end{remark}

If $F \in \frames_{\blam}$, then the $(i,i)$ entry of $F^\ast F \in \mathcal{O}_{\widetilde{\blam}}$ is simply $\langle f_i , f_i \rangle = \|f_i\|^2$, so, reinterpreting in terms of frame data, the momentum map of the Hamiltonian $G$-action on $\frames_{\blam}/\unitary$ is simply $[F] \mapsto (\|f_1\|^2,\dots , \|f_{N-1}\|^2)$. Hence, as a subset of $\frames_{\blam}/\unitary$, the collection $\frames_{\blam}(\br)/\unitary$ of unitary equivalence classes of frames in $\frames_{\blam}(\br)$ is precisely the level set $\Phi_G^{-1}(r_1, \dots , r_{N-1})$.

By the Schur--Horn theorem~\cite{schur1923uber, horn1954doubly}, $\Phi_G^{-1}(r_1, \dots , r_{N-1})$ is non-empty only if the vector $(r_1, \dots , r_N)$ lies in the convex hull of the orbit of $\widetilde{\blam} = (\lambda_1 , \dots , \lambda_d, 0, \dots , 0)$ under the action of the symmetric group $S_N$ which permutes entries. When $\br$ and $\blam$ are non-increasing lists, this condition is easily seen to be equivalent to $\br$ being $\blam$-admissible, so $\frames_{\blam}(\br)$ is non-empty precisely when $\br$ is $\blam$-admissible.

When $\br$ is $\blam$-admissible, there is no guarantee that $(r_1, \dots , r_{N-1}) \in \R^{N-1} \approx \mathfrak{g}^\ast$ is a regular value, but we can still apply \Cref{thm:sjamaar_lerman} to see that the quotient of $\Phi_G^{-1}(r_1, \dots , r_{N-1})$ by $G$ is at worst a symplectic stratified space. We have thus proved the first part of the following proposition:

\begin{prop}\label{prop:full reduction}
When $\br$ is $\blam$-admissible, the (non-empty) space $\frames_{\blam}(\br)/(\unitary \times G)$ is the symplectic quotient
\[
\left(\frames_{\blam} /\unitary\right) \sslash_{\br} G,
\]
which is a symplectic stratified space.\footnote{Here we are slightly abusing notation. We are reducing over the point $(r_1,\dots , r_{N-1}) \in \R^{N-1}$, so we should, strictly speaking, use the notation $\left(\frames_{\blam} /\unitary\right) \sslash_{(r_1,\dots, r_{N-1})} G$. This is notationally cumbersome and, since for elements of $\frames_{\blam}$ the quantity $r_N$ is determined by $(r_1,\dots , r_{N-1})$ anyway, it seems overly pedantic to invent a new shorthand for the truncated vector. Hence, we will use $\br$ to indicate both $(r_1,\dots , r_N)$ and $(r_1, \dots , r_{N-1})$ and trust both ourselves and the reader to keep track of which we mean from context.}

Alternatively, $\frames_{\blam}(\br)/(\unitary \times G)$ can be viewed as the symplectic quotient 
\[
\C^{d \times N} \sslash_{\mathcal{O}_{-\blam} \times \{\br\}} (\unitary \times G).
\]
\end{prop}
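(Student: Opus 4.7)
The plan is to prove the two identifications separately. The first is essentially already established in the discussion immediately preceding the proposition; the second follows by assembling the left and right actions into a single product action and applying the Sjamaar--Lerman theorem once more.

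For the first equality, I would observe that the remarks preceding the statement already identify $\frames_{\blam}(\br)/\unitary$ with the level set $\Phi_G^{-1}(r_1,\dots,r_{N-1})$ inside $\frames_{\blam}/\unitary$, where $\Phi_G$ is the momentum map of \Cref{prop:Hamiltonian torus action}. The Schur--Horn theorem guarantees nonemptiness precisely when $\br$ is $\blam$-admissible. Since $\frames_{\blam}/\unitary$ is a symplectic manifold by \Cref{prop:fixed_frame_spectrum_symplectic} and the $G$-action on it is Hamiltonian by \Cref{prop:Hamiltonian torus action}, I would invoke \Cref{thm:sjamaar_lerman} at the (possibly singular) value $\br$ to conclude that
\[
\left(\frames_{\blam}/\unitary\right)\sslash_{\br} G \;=\; \Phi_G^{-1}(\br)/G \;=\; \frames_{\blam}(\br)/(\unitary \times G)
\]
carries a natural symplectic stratified structure.

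For the second equality, I would combine the two actions into a single product action. The left $\unitary$-action and the right $G$-action on $\C^{d \times N}$ commute, so they jointly define a Hamiltonian $\unitary \times G$-action whose momentum map is the direct sum of the two individual momentum maps,
\[
\Phi_{\unitary \times G}: F \;\longmapsto\; \bigl(-FF^\ast,\; (\|f_1\|^2,\dots,\|f_{N-1}\|^2)\bigr) \in \hermitian(d) \oplus \R^{N-1}.
\]
Here the first factor comes from \Cref{prop:fixed_frame_spectrum_symplectic} and the second factor records the first $N-1$ diagonal entries of $F^\ast F$ (as in the remark after \Cref{prop:Hamiltonian torus action}). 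The preimage $\Phi_{\unitary \times G}^{-1}(\mathcal{O}_{-\blam} \times \{\br\})$ then consists of frames with spectrum $\blam$ and first $N-1$ squared norms $r_1,\dots,r_{N-1}$; the trace identity~\eqref{eq:norm trace} forces the remaining norm $\|f_N\|^2$ to equal $r_N$, so the preimage is exactly $\frames_{\blam}(\br)$. A second application of \Cref{thm:sjamaar_lerman} then yields the desired symplectic quotient description.

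The main subtlety I anticipate is verifying that these two presentations yield the \emph{same} symplectic stratified structure, which is a reduction-in-stages statement in the singular setting. I would handle this by noting that on the open dense stratum provided by \Cref{thm:sjamaar_lerman2} the two reductions are both smooth symplectic manifolds, where classical reduction in stages applies directly, and then observing that both reduced forms are characterized by the same pullback condition $q^\ast \omega_{red} = \iota^\ast \omega$ relative to compatible quotient and inclusion maps, so agreement extends to the full stratified space.
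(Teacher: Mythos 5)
Your overall decomposition matches the paper's: establish the first identification from the preceding discussion, then relate it to the single-step quotient $\C^{d\times N}\sslash_{\mathcal{O}_{-\blam}\times\{\br\}}(\unitary\times G)$. You also correctly recognize that the real content of the second identification is a reduction-in-stages statement in the singular setting. Where you diverge from the paper is in how that last step is handled: the paper simply cites Sjamaar--Lerman's Theorem~4.1, which asserts exactly that singular symplectic reduction by a product group can be done in stages and yields the same symplectic stratified space, whereas you attempt to reprove this compatibility ad hoc.

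Your sketch of that compatibility argument has a genuine gap. You argue that on the open dense stratum classical reduction in stages applies (fine), and then claim that ``agreement extends to the full stratified space'' because both reduced forms satisfy a pullback condition $q^\ast\omega_{red}=\iota^\ast\omega$. But the two reductions involve \emph{different} ambient symplectic manifolds ($\C^{d\times N}$ with its flat form vs.\ $\frames_{\blam}/\unitary$ with its reduced form), different inclusion maps, and different quotient maps, so there is no single ``same pullback condition'' to invoke; one must actually relate the two. Moreover, a symplectic stratified structure is a decomposition into strata together with a form on each, and you give no argument that the orbit-type stratifications arising from the $(\unitary\times G)$-action on $\Phi^{-1}(\mathcal{O}_{-\blam}\times\{\br\})$ and from the $G$-action on $\Phi_G^{-1}(\br)$ coincide stratum by stratum (this is plausible here because the $\unitary$-action on $\frames_{\blam}$ is free, but it needs saying). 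Rather than trying to reconstruct this, the correct and efficient move is to cite the Sjamaar--Lerman reduction-in-stages theorem directly, as the paper does.
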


\begin{proof}
We proved the first sentence above. For the second sentence, Sjamaar and Lerman~\cite[Theorem~4.1]{Sjamaar:1991fe} showed that even when reducing over singular values, we can perform a reduction by a product group in stages. Hence, with $\Phi:\C^{d \times N} \to \hermitian(d) \times \R^{N-1}$ being the (product) momentum map of the Hamiltonian action of $\unitary \times G$ on $\C^{d \times N}$,
\[
	\C^{d \times N}\sslash_{\mathcal{O}_{-\blam} \times \{\br\}} (\unitary \times G) \approx \left(\C^{d \times N}\sslash_{\mathcal{O}_{-\blam}} \unitary \right)\sslash_{\br} G = \left(\frames_{\blam} /\unitary\right) \sslash_{\br} G.
\]
\end{proof}

\begin{remark}
    \Cref{prop:full reduction} says that $\frames_{\blam}(\br)/(\unitary \times G)$ is, in general, a symplectic stratified space. Depending on the parameters $\blam$ and $\br$, it may actually be a smooth manifold. We characterize the parameters such that $\frames_{\blam}(\br)$ is a smooth manifold below in \Cref{thm:frame space manifold eigenvalue characterization}.
\end{remark}

Notice that the momentum map $\Phi: \C^{d \times N} \to \hermitian(d) \times \R^{N-1}$ is given by
\[
	F \mapsto (-FF^\ast, (\|f_1\|^2, \dots , \|f_{N-1}\|^2)).
\]
In particular, the space $\frames_{\blam}(\br)$ is exactly the level set $\Phi^{-1}(\mathcal{O}_{-\blam} \times \{\br\})$. To simplify slightly, assume $\br$ is strongly $\blam$-admissible. When $\frames_{\blam}(\br)$ is a manifold (conditions for which we determine in \Cref{cor:when frame spaces are manifolds} in the next section), then we know its codimension inside $\C^{d \times N}$ is equal to the codimension of $\mathcal{O}_{-\blam} \times \{\br\}$ inside $\hermitian(d) \times \R^{N-1}$. Even when $\frames_{\blam}(\br)$ is not a manifold, we know from \Cref{thm:sjamaar_lerman2} that its quotient by $\unitary \times G$ contains an open dense subset which is a manifold. This subset is a stratum, and hence its (open, dense) preimage is also a manifold by Theorem 3.5 from Sjamaar and Lerman~\cite{Sjamaar:1991fe}. 

In other words, $\frames_{\blam}(\br)$ contains an open, dense subset which is a manifold and which consists precisely of regular points of the momentum map $\Phi$, so for the purposes of dimension-counting we may as well assume $\frames_{\blam}(\br)$ is a manifold.

To that end,
\[
	\dim(\mathcal{O}_{-\blam} \times \{\br\}) = \dim \mathcal{O}_{-\blam} = d^2-k_1^2-\dots - k_\ell^2,
\]
where, as in \Cref{remark:flag}, the $k_1, \dots , k_\ell$ are the multiplicities of the entries in the frame spectrum $\blam$. Subtracting this from $d^2+N-1=\dim(\hermitian(d) \times \R^{N-1})$ gives the codimension $N-1+k_1^2+\dots + k_\ell^2$ of $\mathcal{O}_{-\blam} \times \{\br\}$ (inside $\hermitian(d) \times \R^{N-1}$) and hence also of $\frames_{\blam}(\br)$ (inside $\C^{d \times N}$). This proves:

\begin{cor}\label{cor:dimension}
For $\br$ which is strongly $\blam$-admissible,
	\[
		\dim \frames_{\blam}(\br) = 2dN-N+1-\sum_{j=1}^\ell k_j^2.
	\]
	Hence,
	\[
		\dim \frames_{\blam}(\br)/(\unitary \times G) = 2N(d-1)+2-d^2-\sum_{j=1}^\ell k_j^2,
	\]
where the $k_j$ are the multiplicities of the entries in the frame spectrum $\blam$.
\end{cor}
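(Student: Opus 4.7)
The plan is to exploit the momentum-map description developed in \Cref{prop:full reduction} and reduce the dimension computation to the standard fact that the coadjoint orbit $\mathcal{O}_{\widetilde{\blam}}$ is a flag manifold of known dimension. Concretely, I would write $\frames_{\blam}(\br) = \Phi^{-1}(\mathcal{O}_{-\blam} \times \{\br\})$ for the product momentum map $\Phi:\C^{d\times N}\to \hermitian(d)\times \R^{N-1}$ sending $F\mapsto (-FF^\ast,(\|f_1\|^2,\dots,\|f_{N-1}\|^2))$, and argue that a dimension count via the implicit function theorem applies at every regular point of $\Phi$.

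First I would reduce to the smooth case. By \Cref{thm:sjamaar_lerman2} (combined with \Cref{cor:proper}, since $\unitary\times G$ is compact), the quotient $\frames_{\blam}(\br)/(\unitary\times G)$ has an open dense smooth stratum; the preimage of this stratum under the principal bundle projection is an open dense submanifold of $\frames_{\blam}(\br)$ consisting of regular points of $\Phi$. Since the dimension of an algebraic variety equals the dimension of its smooth locus, it suffices to compute the dimension of this open submanifold, where the transverse preimage formula gives $\dim \frames_{\blam}(\br) = \dim \C^{d\times N} - \mathrm{codim}(\mathcal{O}_{-\blam}\times\{\br\})$.

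Next I would compute the codimension of the target. The factor $\{\br\}$ is a point of codimension $N-1$ inside $\R^{N-1}$, while by \Cref{remark:flag} the orbit $\mathcal{O}_{-\blam}$ is diffeomorphic to the partial flag manifold $\Fl(d_1,\dots,d_\ell,d)$, whose real dimension equals $d^2 - \sum_{j=1}^\ell k_j^2$. Hence $\mathcal{O}_{-\blam}\times\{\br\}$ has codimension $N - 1 + \sum_j k_j^2$ inside $\hermitian(d)\times \R^{N-1}$. Since $\dim_{\R}\C^{d\times N} = 2dN$, this immediately gives the first formula $\dim \frames_{\blam}(\br) = 2dN - N + 1 - \sum_j k_j^2$.

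Finally, for the quotient dimension I would subtract $\dim(\unitary\times G) = d^2 + (N-1)$, using the fact (implicit in \Cref{prop:full reduction} and justified by the principal-bundle nature of the projection on the open stratum) that $\unitary\times G$ acts locally freely on this open submanifold. Simplifying $2dN - N + 1 - \sum_j k_j^2 - d^2 - (N-1) = 2N(d-1) + 2 - d^2 - \sum_j k_j^2$ yields the second claim. There is no serious obstacle: the only subtlety is the passage through singular points of $\Phi$, and this is already absorbed into the Sjamaar--Lerman machinery invoked in the first step.
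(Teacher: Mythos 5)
Your proposal is correct and follows essentially the same route as the paper: realize $\frames_{\blam}(\br)$ as the level set $\Phi^{-1}(\mathcal{O}_{-\blam}\times\{\br\})$, use the Sjamaar--Lerman stratification to reduce to the open dense smooth locus of regular points, compute the codimension of $\mathcal{O}_{-\blam}\times\{\br\}$ as $N-1+\sum_j k_j^2$, and then subtract $\dim(\unitary\times G)=d^2+N-1$ (valid because the action is locally free there) to get the quotient dimension. The only cosmetic discrepancy is that \Cref{remark:flag} is stated for $\mathcal{O}_{\widetilde{\blam}}\subset\hermitian(N)$ rather than $\mathcal{O}_{-\blam}\subset\hermitian(d)$, but the dimension formula $d^2-\sum_j k_j^2$ you use for the latter is the standard one for $\unitary/(\operatorname{U}(k_1)\times\cdots\times\operatorname{U}(k_\ell))$ and matches what the paper uses.
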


\subsection{Manifold Structure of Frame Spaces}

While \Cref{cor:proper} will allow us to apply the symplectic machinery even when $\frames_{\blam}(\br)$ is singular, it is still interesting to understand when this space is a smooth manifold. We completely characterize parameters for which $\frames_{\blam}(\br)$ is a manifold in the following subsection. We then give a detailed description of the local structure of singularities in singular frame spaces.

\subsubsection{Characterizing Smooth Frame Spaces}

The model result on manifold structures of frame spaces is due to Dykema and Strawn~\cite{Dykema:2006ux}, who showed that the singular points in the space $\frames_{\left(\frac{N}{d},\dots,\frac{N}{d}\right)}(1,\dots,1)$ of unit-norm tight frames must be orthodecomposable (we recall the definition of this property below). An immediate corollary is that the space of unit-norm tight frames is a manifold when $N$ and $d$ are relatively prime. More generally, Strawn~\cite{Strawn:2010bn} showed that the singular points in any space of frames with fixed frame vector norms and fixed frame operator (rather than frame spectrum) must be orthodecomposable.

We will prove the converse of these results, showing that the singular points (if any) of every $\frames_{\blam}(\br)$ space are exactly the orthodecomposable frames. First, recall what it means for a frame to be orthodecomposable:

\begin{definition}\label{def:orthodecomposable}
	A frame $\{f_i\}_{i=1}^N \subset \C^d$ is \emph{orthodecomposable} if there exists a partition $P_1, \dots , P_m$ of $\{1,\dots , N\}$ and pairwise orthogonal subspaces $V_1, \dots , V_m \subset \C^d$ with $\C^d = \bigoplus_{i=1}^m V_i$ so that, for all $k =1, \dots , m$,  $\{f_i\}_{i \in P_k}$ is a frame for $V_k$.
\end{definition}

In our setting, we are realizing the space $\frames_{\blam}(\br) = \Phi^{-1}\left(\mathcal{O}_{-\blam} \times \left\{\br\right\}\right)$ as the inverse image of a coadjoint orbit under the momentum map $\Phi$. This frame space will certainly be a manifold if it contains no critical points of $\Phi$, so we now characterize the critical points of $\Phi$.

A standard part of the discussion around \Cref{thm:marsden_weinstein,thm:marsden_weinstein_coadjoint_orbit} (see, e.g.,~\cite[Proof of Proposition~5.4.13]{mcduff2017introduction} or~\cite[\S23.2.1]{CannasdaSilva:2001cg}) is the following characterization of regular points of momentum maps:

\begin{prop}\label{prop:regular points of momentum maps}
	Let $H$ be a Lie group and let $(M,\omega)$ be a symplectic manifold admitting a Hamiltonian $H$-action with momentum map $\Psi:M \to \mathfrak{h}^\ast$. Then $p\in M$ is a regular point of $\Psi$ if and only if the action of $H$ is locally free at $p$.
\end{prop}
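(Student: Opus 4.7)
The plan is to directly analyze the derivative $D\Psi(p): T_p M \to T_{\Psi(p)} \mathfrak{h}^\ast \approx \mathfrak{h}^\ast$ using the defining property of the momentum map. The point $p$ is regular precisely when $D\Psi(p)$ is surjective, which by duality in finite dimensions is equivalent to its annihilator $\mathrm{image}(D\Psi(p))^\circ \subseteq \mathfrak{h}$ being trivial. So the main task is to identify this annihilator intrinsically and see that it coincides with the Lie algebra of the stabilizer $H_p$, which is trivial exactly when the $H$-action is locally free at $p$.

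First I would unpack what it means for $\xi \in \mathfrak{h}$ to annihilate $\mathrm{image}(D\Psi(p))$: this says $D\Psi(p)(X)(\xi) = 0$ for every $X \in T_p M$. By the defining relation of a momentum map,
\[
D\Psi(p)(X)(\xi) = \omega_p(Y_\xi|_p, X),
\]
so $\xi$ annihilates the image if and only if $\omega_p(Y_\xi|_p, X) = 0$ for all $X \in T_p M$. The nondegeneracy of $\omega$ then forces $Y_\xi|_p = 0$. Thus
\[
\mathrm{image}(D\Psi(p))^\circ = \{\xi \in \mathfrak{h} \mid Y_\xi|_p = 0\}.
\]

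Next I would recognize this annihilator as the Lie algebra $\mathfrak{h}_p$ of the stabilizer $H_p = \{h \in H \mid h\cdot p = p\}$: indeed $\xi \in \mathfrak{h}_p$ iff the one-parameter subgroup $\exp(\epsilon\xi)$ fixes $p$ for all $\epsilon$, which differentiated at $\epsilon = 0$ gives precisely $Y_\xi|_p = 0$ (and conversely, integrating $Y_\xi|_p = 0$ along the flow of the complete vector field $Y_\xi$ shows $\exp(\epsilon\xi) \cdot p = p$). Combining with the previous paragraph, $D\Psi(p)$ is surjective if and only if $\mathfrak{h}_p = 0$, which is the statement that $H_p$ is discrete, i.e., that $H$ acts locally freely at $p$.

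There is no serious obstacle here; the argument is essentially a duality computation using the defining equation of the momentum map plus the nondegeneracy of $\omega$. The only subtlety worth flagging is the identification $T_{\Psi(p)} \mathfrak{h}^\ast \approx \mathfrak{h}^\ast$ coming from the vector space structure on $\mathfrak{h}^\ast$, and the passage from "$Y_\xi|_p = 0$ for a single $\xi$" to "$\exp(\epsilon\xi)$ fixes $p$ for all $\epsilon$," which uses uniqueness of integral curves of the fundamental vector field $Y_\xi$ starting at $p$.
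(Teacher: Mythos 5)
The paper does not actually prove this proposition; it is stated as a standard fact with references to McDuff--Salamon and Cannas da Silva. Your argument is correct and is essentially the canonical proof one finds in those sources: identify $\mathrm{image}(D\Psi(p))^\circ \subseteq \mathfrak{h}$ via the defining momentum-map relation and nondegeneracy of $\omega$ with $\{\xi : Y_\xi|_p = 0\}$, recognize this as the Lie algebra $\mathfrak{h}_p$ of the stabilizer, and conclude that surjectivity of $D\Psi(p)$ is equivalent to $\mathfrak{h}_p = 0$, i.e.\ to $H_p$ being discrete. The integration step (from $Y_\xi|_p = 0$ to $\exp(\epsilon\xi)\cdot p = p$ by uniqueness of integral curves) is the one place where something genuinely needs to be said, and you said it. There is nothing to compare against in the paper's text, but your proof would serve as a suitable self-contained substitute for the citation.
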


Recall that a group action is \emph{locally free} at $p$ if the stabilizer of $p$ is discrete. Therefore, \Cref{prop:regular points of momentum maps} gives a characterization of critical points of a momentum map: $p \in M$ is a critical point if and only if the stabilizer of $p$ is continuous, and in particular contains a nontrivial one-parameter subgroup. This is the key to characterizing the critical points of $\Phi: \C^{d \times N} \to \hermitian(d) \times \R^{N-1}$.

\begin{prop}\label{prop:critical points}
	A frame $F$ is a critical point of the momentum map $\Phi:\C^{d \times N} \to \hermitian(d) \times \R^{N-1}$ of the Hamiltonian action of $\unitary \times G$ if and only if $F$ is orthodecomposable.
\end{prop}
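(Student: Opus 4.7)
The plan is to apply \Cref{prop:regular points of momentum maps}, which reduces the question to identifying when the stabilizer of $F$ under $\unitary \times G$ has positive dimension. To extract a workable condition, I will lift the torus factor from $G$ to $\bt$, so that we have an honest action of $\unitary \times \bt$ on $\C^{d \times N}$ by $(U, D) \cdot F = UFD^{\ast}$ with ineffective kernel the diagonal circle $\{(e^{\sqrt{-1}\theta}\Id_d, e^{\sqrt{-1}\theta}\Id_N)\}$. Differentiating a one-parameter subgroup fixing $F$ at the identity yields the infinitesimal fixing condition
\[
\xi F = F\eta, \quad (\xi, \eta) \in \mathfrak{u}(d) \times \mathfrak{t},
\]
where $\mathfrak{t}$ denotes the diagonal skew-Hermitian matrices. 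The scalar line $\{(\sqrt{-1}s\Id_d, \sqrt{-1}s\Id_N) : s \in \R\}$ always solves this and is exactly the Lie algebra of the ineffective kernel, so $F$ is a critical point precisely when this equation admits a solution with $\xi$ not a scalar matrix.

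The pivotal observation driving both directions is that, writing $\eta = \sqrt{-1}\operatorname{diag}(\theta_1, \dots, \theta_N)$, the equation $\xi F = F\eta$ reads column by column as $\xi f_j = \sqrt{-1}\theta_j f_j$; each frame vector must be an eigenvector of $\xi$ with eigenvalue prescribed by the corresponding diagonal entry of $\eta$. The forward direction is then an explicit construction: given a non-trivial orthodecomposition $\C^d = V_1 \oplus \cdots \oplus V_m$ subordinate to a partition $P_1, \dots, P_m$ with $m \geq 2$, I pick distinct reals $s_1, \dots, s_m$ and set $\xi := \sqrt{-1} \sum_k s_k P_{V_k}$ (with $P_{V_k}$ the orthogonal projection onto $V_k$) together with $\eta := \sqrt{-1}\operatorname{diag}(\theta_j)$ where $\theta_j := s_k$ for $j \in P_k$. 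Since $f_j \in V_k$ for $j \in P_k$, this pair satisfies $\xi F = F\eta$, and $\xi$ is non-scalar by choice of the $s_k$; hence $F$ is critical.

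For the reverse direction, I start from a non-scalar $\xi$ satisfying $\xi F = F\eta$. Since $\xi$ is skew-Hermitian, it orthogonally decomposes $\C^d = W_1 \oplus \cdots \oplus W_p$ into eigenspaces for distinct purely imaginary eigenvalues $\sqrt{-1}\mu_1, \dots, \sqrt{-1}\mu_p$, with $p \geq 2$ precisely because $\xi$ is non-scalar. Each nonzero $f_j$ lies in $W_{k(j)}$ for the unique index with $\mu_{k(j)} = \theta_j$, producing a partition $Q_k := \{j : k(j) = k\}$ of $\{1, \dots, N\}$ and the containments $V_k := \operatorname{span}\{f_j : j \in Q_k\} \subseteq W_k$. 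The main obstacle I anticipate is verifying that these containments are equalities, which is what turns the data into an honest orthodecomposition. The route I have in mind is to combine the spanning property of $F$ (yielding $\sum_k V_k = \C^d$) with the mutual orthogonality of the $W_k$; a dimension count in the ambient orthogonal direct sum $\bigoplus_k W_k = \C^d$ should then force $\dim V_k = \dim W_k$ and hence $V_k = W_k$, so that no $Q_k$ is empty and $\{f_j\}_{j \in Q_k}$ is a frame for $V_k$.
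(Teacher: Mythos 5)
Your proof is correct and follows essentially the same path as the paper's: reduce via \Cref{prop:regular points of momentum maps} to one-parameter subgroups of the stabilizer, extract the column-wise eigenvector condition $\xi f_j = \sqrt{-1}\theta_j f_j$, and translate between orthodecompositions and the orthogonal eigenspace decomposition of a non-scalar skew-Hermitian $\xi$. Your version is slightly more complete than the paper's in two small ways: the dimension count verifying $V_k = W_k$ (so that the frame vectors in each block actually span the corresponding eigenspace, as the definition of orthodecomposability requires) is a detail the paper's proof passes over silently, and your direct construction of $(\xi,\eta)$ from an arbitrary $m$-part orthodecomposition avoids the paper's preliminary normalization to a two-block block-diagonal form.
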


\begin{proof}
Let $F = \begin{bmatrix}f_1\mid \cdots \mid f_N\end{bmatrix} \in \C^{d\times N}$ be a frame and let $\zeta:\R \to G$ be a one-parameter subgroup such that $\zeta(t) \cdot F = F$ for all $t \in \R$. Then $\zeta$ uniquely corresponds to an element of the Lie algebra $\mathfrak{u}(d) \times \mathfrak{g}$ of $\unitary \times G$, and in particular there exist matrices $\xi \in \mathfrak{u}(d)$ and $\theta \in \mathfrak{g}$ such that $\zeta(t) = \exp(t(\xi,\theta)) = (\exp(t\xi),\exp(t\theta))$ (see, e.g.,~\cite[Propositions~20.1 and~20.5]{Lee:2013bn}). Under our concrete realization of $G = \bt/Z(\operatorname{U}(N))$ as the subgroup of diagonal unitary matrices with last entry equal to 1, $\theta$ will be of the form
\[
\theta = \sqrt{-1} \cdot \mathrm{diag}(\theta_1,\ldots,\theta_{N-1},0)
\]
for some $\theta_1,\ldots,\theta_{N-1} \in \R$. Then we have
\[
0 = \left.\frac{d}{dt}\right|_{t=0} \zeta(t) \cdot F = \left.\frac{d}{dt}\right|_{t=0}  \exp(t\xi)  F  \exp(-t\theta) = \xi  F - F  \theta.
\]
At the frame vector level, this yields a system of equations
\begin{equation}\label{eqn:orthodecomposable}
\arraycolsep=1.4pt\def\arraystretch{1.4}
\left\{ 
\begin{array}{rcl}
\xi  f_j &=& \sqrt{-1}\theta_j f_j, \qquad j = 1,\ldots,N-1, \\
\xi  f_N &=& 0. 
\end{array}
\right.
\end{equation}

Each $\sqrt{-1}\theta_j$ is therefore an eigenvalue of the skew-Hermitian matrix $\xi$ with corresponding eigenvector $f_j$. Moreover, any solution of \eqref{eqn:orthodecomposable} yields a one-parameter subgroup in the stabilizer of $F$. By the Spectral Theorem for normal matrices, the eigenspaces for distinct eigenvalues of $\xi$ are orthogonal. 

With the above setup, we are prepared to prove the claim. First, suppose that $F$ is not a regular point of the momentum map. By \Cref{prop:regular points of momentum maps}, it is possible to find a one-parameter subgroup $\zeta:\R \to G$ of the stabilizer of $F$ which is nontrivial. In this case, $\xi$ has at least two distinct eigenvalues and it follows that $F$ is orthodecomposable, with frame vectors partitioned into the orthogonal eigenspaces of $\xi$. 

Conversely, suppose that $F$ is orthodecomposable. Let $P_1,\ldots,P_m$ denote the partition and $V_1,\ldots,V_m$ the subspaces appearing in \Cref{def:orthodecomposable}. Permuting columns of $F$ as necessary, we can assume without loss of generality that $P_1 = \{1,\ldots,k\}$ for some $k < N$. For convenience, we can apply a unitary transformation to $F$ so that $f_1,\ldots,f_k$ span a coordinate plane $V_1$; say, the span of the first $\ell < d$ standard basis vectors. Since  $V_2,\ldots,V_m$ are all orthogonal to $V_1$, $\oplus_{i=2}^m V_i$ is the span of the remaining $d-\ell$ standard basis vectors. Then $F$ has the form

\[
	F = \begin{pmatrix} 
		F_1 & 0 \\
		0 & F_2
		\end{pmatrix},
\]
where the nonzero blocks $F_1$ and $F_2$  have sizes $\ell \times k$ and $(d-\ell)\times (N-k)$, respectively. Let 
\[
\xi := \mathrm{diag}(\underbrace{\sqrt{-1},\cdots,\sqrt{-1}}_{\ell},\underbrace{0,\ldots,0}_{d-\ell})\in \mathfrak{u}(d) \qquad \mbox{and} \qquad \theta := \mathrm{diag}(\underbrace{\sqrt{-1},\ldots,\sqrt{-1}}_{k},\underbrace{0,\ldots,0}_{N-k}) \in \mathfrak{u}(1)^N.
\]
It is easy to check that $\xi,\theta$, and $F$ satisfy the system \eqref{eqn:orthodecomposable} and therefore yield a nontrivial one-parameter subgroup in the stabilizer of $F$. By \Cref{prop:regular points of momentum maps}, $F$ is not a regular point of the momentum map.
\end{proof}

The characterization of singular points in $\frames_{\blam}(\br)$ is now an easy corollary:

\begin{cor}\label{cor:when frame spaces are manifolds}
	The singular points of $\frames_{\blam}(\br)$ are exactly the orthodecomposable elements. Hence, $\frames_{\blam}(\br)$ is a manifold if and only if it contains no orthodecomposable frames.
\end{cor}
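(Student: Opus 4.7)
The plan is to deduce the corollary from \Cref{prop:critical points} using standard facts about preimages under momentum maps. By \Cref{prop:critical points}, the orthodecomposable frames are precisely the critical points of the momentum map $\Phi: \C^{d\times N} \to \hermitian(d) \times \R^{N-1}$, so the claim reduces to showing that $F \in \frames_{\blam}(\br) = \Phi^{-1}(\mathcal{O}_{-\blam} \times \{\br\})$ is a smooth point if and only if it is a regular point of $\Phi$.

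The forward implication is immediate. If $F \in \frames_{\blam}(\br)$ is not orthodecomposable, then $d\Phi_F$ is surjective, so $\Phi$ is (trivially) transverse to the submanifold $\mathcal{O}_{-\blam} \times \{\br\}$ at $F$. The preimage theorem then guarantees that $\frames_{\blam}(\br)$ is a smooth submanifold of $\C^{d\times N}$ in a neighborhood of $F$.

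The reverse implication is the heart of the argument. Suppose $F$ is orthodecomposable, so by \Cref{prop:critical points} its stabilizer $H \leq \unitary \times G$ has positive-dimensional Lie algebra $\mathfrak{h}$. I would invoke the Marle--Guillemin--Sternberg local normal form for Hamiltonian actions of compact Lie groups, which furnishes a $(\unitary \times G)$-equivariant symplectomorphism from a neighborhood of the orbit $(\unitary \times G)\cdot F$ onto a twisted product $(\unitary \times G) \times_H (\mathfrak{h}^\circ \times V)$, where $V$ is a symplectic $H$-representation (the \textit{symplectic slice} at $F$). In these local coordinates, $\frames_{\blam}(\br)$ is modeled by $(\unitary \times G)\times_H(\{0\} \times \Phi_V^{-1}(0))$, where $\Phi_V:V \to \mathfrak{h}^*$ is the quadratic momentum map of the linearized $H$-action on $V$. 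It therefore suffices to show that $\Phi_V^{-1}(0)$ has a genuine singularity at $0 \in V$.

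For that last step I would exploit the explicit circle subgroup of $H$ produced in the proof of \Cref{prop:critical points}: if $F$ is written in block form $F = F_1 \oplus F_2$ with $F_1$ of size $\ell \times k$ and $F_2$ of size $(d-\ell)\times(N-k)$, the indicated circle acts on the upper-right $\ell \times (N-k)$ off-diagonal block of $T_F \C^{d\times N}$ with weight $+1$ and on the lower-left $(d-\ell) \times k$ off-diagonal block with weight $-1$. These two off-diagonal blocks descend to $H$-invariant symplectic subspaces of the slice $V$ with opposite weights, and $\Phi_V$ restricted to them is (up to a positive constant) the indefinite quadratic form $\|v_+\|^2 - \|v_-\|^2$, whose zero locus is a nondegenerate quadric cone with an isolated singular point at the origin. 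This singularity of $\Phi_V^{-1}(0)$ pulls back to a singularity of $\frames_{\blam}(\br)$ at $F$. The main technical obstacle is executing the Marle--Guillemin--Sternberg setup in these explicit block coordinates so that $\Phi_V$ can be identified on the off-diagonal directions; this is essentially a bookkeeping exercise in block-matrix calculus, but does require some care.
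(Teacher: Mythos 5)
Your proposal is correct and takes essentially the same route as the paper: both reduce the statement to \Cref{prop:critical points}, handle regular (non-orthodecomposable) points via transversality and the preimage theorem, and establish that critical points are genuinely singular by producing a nontrivial indefinite quadratic in the local model, sourced from the same $\pm1$-weighted circle subgroup constructed in the proof of \Cref{prop:critical points}. The paper cites the Arms--Marsden--Moncrief description of level sets near critical points and then carries out the cone computation explicitly in \Cref{sec:local_structure}, whereas you appeal to the Marle--Guillemin--Sternberg normal form (equivalent machinery); the one detail you should not leave as ``bookkeeping'' is the verification that the symplectic slice $V$ actually meets both off-diagonal weight blocks in positive dimension---the paper does this by noting $d_j < N_j$ forces nontrivial left kernels of the $F_j$---since without that check the restricted quadratic form could in principle be semidefinite and its zero set a linear subspace.
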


\begin{proof}
	From \Cref{prop:critical points} we know that the critical points of the momentum map $\Phi$ are exactly the orthodecomposable frames. It is a nontrivial fact (see, e.g.,~\cite[Theorem~5]{Arms:1981ey} or~\cite[Proposition~2.5]{Sjamaar:1991fe}) that in a neighborhood of a critical point of a momentum map, the level set looks like the product of a quadratic cone and a manifold. Therefore, $\frames_{\blam}(\br) = \Phi^{-1}\left( \mathcal{O}_{-\blam} \times \left\{\br\right\}\right)$ has a quadratic singularity at each orthodecomposable frame. Since the structure of such singularities is itself of interest \cite{Cahill:2017gv}, we describe these cone singularities in detail in \Cref{sec:local_structure} below.
	
	Orthodecomposable frames are the only possible singularities: if $F \in \frames_{\blam}(\br)$ is not orthodecomposable, then the differential $D\Phi_{F}:T_{F}\C^{d \times N} \to T_{\Phi(F)}(\mathcal{H}(d) \times \R^{N-1})$ is surjective. In particular, this implies $\Phi$ is transverse to a neighborhood of $\Phi(F)$ in $\mathcal{O}_{-\blam} \times \left\{\br\right\}$, so it is standard that its inverse image---which is a neighborhood of $F$ in $\frames_{\blam}(\br)$---is a submanifold of $\C^{d \times N}$ (see, e.g.,~\cite[Theorem~6.30]{Lee:2013bn}), and in particular is smooth at $F$.
\end{proof}

\begin{remark}
Since \emph{every} orthodecomposable frame is a singular point of the $\frames_{\blam}(\br)$ space containing it, \Cref{cor:when frame spaces are manifolds} solves the complex case of Cahill, Mixon, and Strawn's Problem~5.8~\cite{Cahill:2017gv}.
\end{remark}

To make \Cref{cor:when frame spaces are manifolds} more practical, it would be helpful to know, in terms of some computable conditions on the parameters $d$, $N$, $\blam$, and $\br$, exactly when $\frames_{\blam}(\br)$ contains an orthodecomposable frame.

To that end, suppose $F\in \frames_{\blam}(\br)$ is orthodecomposable. Then, as in the proof of \Cref{prop:regular points of momentum maps}, after possibly permuting columns and applying a unitary transformation, we can realize $F$ in block-diagonal form:
\[
	F = \begin{pmatrix} 
		F_1 & 0 \\
		0 & F_2
		\end{pmatrix},
\]
where $F_1 \in \C^{\ell \times k}$ and $F_2 \in \C^{(d-\ell) \times (N-k)}$ are frames. But then both the Gram matrix and the frame operator associated to $F$ are also block-diagonal:
\[
	F^\ast F = \begin{pmatrix} 
		F_1^\ast F_1 & 0 \\
		0 & F_2^\ast F_2
		\end{pmatrix}
		\quad
		\text{and}
		\quad
		FF^\ast = \begin{pmatrix} 
		F_1 F_1^\ast & 0 \\
		0 & F_2 F_2^\ast
		\end{pmatrix}.
\]
 Since $\tr(F_1^\ast F_1) = \tr(F_1 F_1^\ast)$, we see that $r_1+\dots+r_k=\lambda_1 + \dots + \lambda_\ell$. More generally, since we permuted columns to get $F$ into block-diagonal form, there exist proper subsets $\{r_{i_1}, \dots , r_{i_k}\} \subsetneq \br$ and $\{\lambda_{i_1}, \dots , \lambda_{i_\ell}\}\subsetneq\blam$ so that
\[
	r_{i_1}+ \dots + r_{i_k} = \lambda_{i_1} + \dots + \lambda_{i_\ell}.
\]

Moreover, let 
\[
	\br' = (r_1',\ldots,r_k') := (r_{i_1},\ldots,r_{i_k}) \quad \mbox{and} \quad \blam' = (\lambda_1',\ldots,\lambda_\ell') := (\lambda_{i_1},\ldots,\lambda_{i_\ell})
\]
and let $\br''$ and $\blam''$ be their respective complement vectors. Then we see that the columns $\{f_{i_1},\dots , f_{i_k}\}$ give a frame for an $\ell$-dimensional subspace of $\C^d$ with spectrum $\lambda_{i_1}\geq \dots \geq \lambda_{i_\ell} > 0$, which means that the frame space $\mathcal{F}_{\blam'}^{\ell,k}(\br')$ is non-empty, and in particular that $\br'$ is $\blam'$-admissible. Similarly, the complementary columns give an element of $\mathcal{F}_{\blam''}^{d-\ell,N-k}(\br'')$, and hence $\br''$ is $\blam''$-admissible.

Summarizing, we have shown the following consequences of orthodecomposability:
\begin{lem}\label{lem:orthodecomposable consequence}
    If $F \in \frames_{\blam}(\br)$ is orthodecomposable, then there exist proper partitions $\br = \br' \sqcup \br''$ and $\blam = \blam' \sqcup \blam''$ so that $\br'$ is $\blam'$-admissible, $\br''$ is $\blam''$-admissible, and $\br'$ and $\blam'$ have the same sum (which implies that $\br''$ and $\blam''$ have the same sum as well).
\end{lem}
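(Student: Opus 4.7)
The plan is to simply unpack the definition of orthodecomposability and read off the claimed partition structure, since most of the work has already been done in the paragraphs preceding the lemma statement.

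First, I would reduce to the case of a two-block decomposition. By \Cref{def:orthodecomposable}, orthodecomposability gives a partition $P_1, \dots, P_m$ of $\{1,\dots,N\}$ and pairwise orthogonal subspaces $V_1, \dots, V_m \subset \C^d$ with $\C^d = \bigoplus_{k=1}^m V_k$, such that $\{f_i\}_{i\in P_k}$ is a frame for $V_k$. If $m \geq 2$, I would collapse this by setting $P' := P_1$, $P'' := P_2 \sqcup \cdots \sqcup P_m$, $V' := V_1$, and $V'' := V_2 \oplus \cdots \oplus V_m$, so that $V'$ and $V''$ are orthogonal, span $\C^d$, and $\{f_i\}_{i \in P'}$ and $\{f_i\}_{i \in P''}$ are frames for $V'$ and $V''$ respectively. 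Both $P'$ and $P''$ are nonempty (since $V', V'' \neq 0$), so the induced partition $\br = \br' \sqcup \br''$ (where $\br'$ and $\br''$ record the squared norms of the frame vectors indexed by $P'$ and $P''$, reordered decreasingly) is proper.

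Next, after choosing an orthonormal basis of $\C^d$ adapted to $V' \oplus V''$ and permuting columns so that those indexed by $P'$ come first, $F$ takes the block-diagonal form
\[
F = \begin{pmatrix} F_1 & 0 \\ 0 & F_2 \end{pmatrix},
\]
where $F_1$ has columns the representations of $\{f_i\}_{i\in P'}$ in a basis for $V'$, and similarly for $F_2$. Consequently the frame operator is also block-diagonal, $FF^\ast = \mathrm{diag}(F_1F_1^\ast, F_2 F_2^\ast)$, so the multiset of eigenvalues $\blam$ of $FF^\ast$ is the disjoint union of the spectra of $F_1F_1^\ast$ and $F_2 F_2^\ast$. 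I define $\blam'$ and $\blam''$ to be these respective spectra (reordered decreasingly); since $F_1$ and $F_2$ are frames for nonzero subspaces, both $\blam'$ and $\blam''$ are nonempty, giving a proper partition of $\blam$.

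Finally, I would verify the two remaining claims. The sum condition follows immediately from the cyclic invariance of trace:
\[
\sum_i r'_i = \mathrm{tr}(F_1^\ast F_1) = \mathrm{tr}(F_1 F_1^\ast) = \sum_j \lambda'_j,
\]
and subtracting from \eqref{eq:norm trace} yields the same for $\br''$ and $\blam''$. For admissibility, note that $F_1$ is an honest element of $\mathcal{F}^{\dim V', |P'|}_{\blam'}(\br')$, so this frame space is nonempty; the theorem of Casazza and Leon \cite{Casazza:2010ti} cited earlier then gives that $\br'$ is $\blam'$-admissible. The same argument applied to $F_2$ gives $\blam''$-admissibility of $\br''$.

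This proof is essentially bookkeeping, so there is no real obstacle — the only point requiring mild care is ensuring that the partitions of $\br$ and $\blam$ are both \emph{proper}, which follows from the fact that the orthogonal decomposition and the partition of index set in the definition of orthodecomposability both involve at least two nontrivial pieces.
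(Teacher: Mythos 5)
Your proof is correct and follows essentially the same path as the paper: reduce to a two-block orthogonal decomposition, diagonalize to exhibit $F$ in block form $\mathrm{diag}(F_1,F_2)$, read off the partition of $\blam$ from the block-diagonal frame operator and the partition of $\br$ from the column grouping, then invoke trace cyclicity for the sum condition and nonemptiness of the sub-frame spaces plus Casazza--Leon for admissibility. The only stylistic difference is that you make the collapse from $m$ blocks to two explicit and spell out why both halves of each partition are nonempty, whereas the paper handles this implicitly by citing the construction in its proof of \Cref{prop:critical points}.
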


Combined with \Cref{cor:when frame spaces are manifolds}, this gives a sufficient condition for $\frames_{\blam}(\br)$ to be a manifold. In fact, it is also a necessary condition:

\begin{thm}\label{thm:frame space manifold eigenvalue characterization}
	$\frames_{\blam}(\br)$ is a manifold if and only if there are no proper partitions $\br = \br' \sqcup \br''$ and $\blam = \blam' \sqcup \blam''$ so that $\br'$ is $\blam'$-admissible, $\br''$ is $\blam''$-admissible, and $\br'$ and $\blam'$ have the same sum.
\end{thm}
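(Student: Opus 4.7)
The theorem is an if-and-only-if, and one direction is essentially already in hand. If no proper partitions of the stated form exist, then \Cref{lem:orthodecomposable consequence} (contrapositive form) rules out the existence of any orthodecomposable frame inside $\frames_{\blam}(\br)$, and by \Cref{cor:when frame spaces are manifolds} the space is then a manifold. So the plan reduces to proving the converse: given proper partitions $\br = \br' \sqcup \br''$ and $\blam = \blam' \sqcup \blam''$ with $\br'$ being $\blam'$-admissible, $\br''$ being $\blam''$-admissible, and $\sum \br' = \sum \blam'$, I will exhibit an orthodecomposable frame lying in $\frames_{\blam}(\br)$; then \Cref{cor:when frame spaces are manifolds} forces $\frames_{\blam}(\br)$ to fail to be a manifold.

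For the construction, say $\br' \in \R^k$ and $\blam' \in \R^\ell$ (with $\br''$ and $\blam''$ the complementary sub-lists). The admissibility hypotheses together with the sum conditions say exactly that the Schur--Horn/Casazza--Leon existence criteria are met for each block, so the frame spaces $\mathcal{F}^{\ell,k}_{\blam'}(\br')$ and $\mathcal{F}^{d-\ell,N-k}_{\blam''}(\br'')$ are nonempty; pick $F_1$ and $F_2$ from each and form the block matrix
\[
F_0 := \begin{pmatrix} F_1 & 0 \\ 0 & F_2 \end{pmatrix} \in \C^{d \times N}.
\]
Direct block computation shows the frame operator $F_0 F_0^\ast$ is block-diagonal with spectrum (as a multiset) equal to $\blam' \cup \blam'' = \blam$, and the squared column norms of $F_0$, listed in order, form the concatenation $(\br',\br'')$, which as a multiset is $\br$. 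By construction the two diagonal blocks of $F_0$ are supported on orthogonal coordinate subspaces of $\C^d$, so $F_0$ is orthodecomposable in the sense of \Cref{def:orthodecomposable}.

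The only subtlety is matching the prescribed ordering: an element of $\frames_{\blam}(\br)$ as defined in the paper has the $i$th column with squared norm $r_i$, where $\br$ is nonincreasing. Let $\sigma \in S_N$ be a permutation that rearranges $(\br',\br'')$ into nonincreasing order, and let $F := F_0 P_\sigma$ with $P_\sigma$ the corresponding permutation matrix. Right multiplication by $P_\sigma$ permutes columns without affecting the frame operator, so $FF^\ast = F_0 F_0^\ast$ still has spectrum $\blam$, while the squared column norms of $F$ now equal $\br$ entry by entry; hence $F \in \frames_{\blam}(\br)$. Orthodecomposability is invariant under reindexing of frame vectors (the definition only asks for \emph{some} partition of column indices), so $F$ remains orthodecomposable. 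By \Cref{cor:when frame spaces are manifolds}, the presence of this singular point shows $\frames_{\blam}(\br)$ is not a manifold, completing the converse.

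The only real content of the argument is invoking Casazza--Leon (stated just after \eqref{eq:parseval admissible}) to realize each block, and then keeping careful track of column orderings when assembling the block-diagonal frame; everything else is bookkeeping on top of \Cref{lem:orthodecomposable consequence} and \Cref{cor:when frame spaces are manifolds}. I do not anticipate a genuine obstacle here, since both building blocks are already in place from the preceding subsection.
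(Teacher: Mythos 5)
Your proposal is correct and takes essentially the same approach as the paper: both directions rest on \Cref{cor:when frame spaces are manifolds} (orthodecomposability characterizes singular points), \Cref{lem:orthodecomposable consequence} for the forward implication, and the Casazza--Leon existence criterion to build a block-diagonal orthodecomposable frame for the converse, with a column permutation to match the sorted-norm convention. The extra bookkeeping you include (explicitly verifying the spectrum and norms of $F_0$ and the invariance of orthodecomposability under column reindexing) is implicit in the paper's terser phrasing.
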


\begin{proof}
	If $\frames_{\blam}(\br)$ is not a manifold, then \Cref{cor:when frame spaces are manifolds} tells us that it contains some orthodecomposable frame and hence, by \Cref{lem:orthodecomposable consequence}, there are such partitions of $\br$ and $\blam$.
	
	Conversely, suppose there are such partitions. Then, since $\br'$ is $\blam'$-admissible, there exists a frame $\{f_{i_1},\dots , f_{i_k}\} \in \mathcal{F}_{\blam'}^{\ell,k}(\br')$, and similarly there exists a frame $\{g_{j_1},\dots , g_{j_{N-k}}\} \in \mathcal{F}_{\blam''}^{d-\ell, N-k}(\br'')$. Embedding the former into the $\ell$-dimensional subspace of $\C^d$ spanned by the first $\ell$ standard basis vectors and the latter into the $(d-\ell)$-dimensional orthogonal complement and combining them into a single list sorted by decreasing norm gives an orthodecomposable element---and hence a singular point---of $\frames_{\blam}(\br)$, which is therefore not a manifold.
\end{proof}

Notice that \Cref{thm:frame space manifold eigenvalue characterization} recovers Dykema and Strawn's result that the space of unit-norm tight frames is a manifold when $N$ and $d$ are relatively prime, and also implies that $\frames_{\blam}(\br)$ is a manifold for generic $\br$ and $\blam$.

\subsubsection{Describing Singularities}\label{sec:local_structure}

As was mentioned above, \Cref{cor:when frame spaces are manifolds} solves Problem~5.8 of \cite{Cahill:2017gv} for complex FUNTFs. Problem~1.5 of the same paper asks for a description of the local geometry of the space of FUNTFs near orthodecomposable frames, and we now expand on the proof of \Cref{cor:when frame spaces are manifolds} to solve this problem. We describe singularities for progressively more general classes of frames, with full details given in the simplest case and sketches given for more general cases.
	
\paragraph{Minimally Orthodecomposable FUNTFs.} A result of Arms, Marsden, and Moncrief gives a description of the local geometry near any point on the level set of a momentum map of an arbitrary symplectic manifold \cite[Theorem~5]{Arms:1981ey}. This specializes to our setting to show that near any FUNTF 
\[
F \in \frames_{(\frac{N}{d},\ldots,\frac{N}{d})}(1,\ldots,1) = \Phi^{-1}\left(\left(-\frac{N}{d},\ldots,-\frac{N}{d}\right),(1,\ldots,1)\right),
\]
there is a local diffeomorphism of the ambient space $\C^{d \times N}$ which takes a neighborhood of $F$ in the level set onto the product of a quadratic cone $C_F$ and a smooth manifold. The cone is described explicitly as
	\[
	C_F = \left\{X \in \mathrm{ker}(D\Phi(F)) \cap \mathrm{ker}(D\Phi(F) \circ \mathbb{J}) \mid D^2 \Phi(F)(X,X) \in \mathrm{image}(D\Phi(F))\right\},
	\]
where $\mathbb{J}$ denotes multiplication by $\sqrt{-1}$, considered as a linear map on the real vector space $T_F\mathbb{C}^{d \times N}$, and $D^2 \Phi(F)$ is the Hessian of $\Phi$ at $F$, considered as a $\hermitian(d)$-valued bilinear form. Observe that if $F$ is a regular point of $\Phi$ then $C_F$ is just a linear subspace of $\C^{d \times N}$. We will show that $C_F$ is a singular cone when $F$ is not a regular point.
	
Writing $F = \begin{bmatrix}f_1 \mid \cdots \mid f_N\end{bmatrix}$ and $X = \begin{bmatrix}x_1\mid \cdots \mid x_N\end{bmatrix}$, we have 
	\[
	D\Phi(F)(X) = \left(-FX^\ast - XF^\ast,(2\mathrm{Re}\langle f_j, x_j \rangle)_{j=1}^{N-1}\right)
	\]
	and
	\[
	D\Phi(F) \circ \mathbb{J}(X) = \left(\sqrt{-1} FX^\ast - \sqrt{-1} X F^\ast, (2\mathrm{Im}\langle f_j, x_j\rangle)_{j=1}^{N-1} \right),
	\]
	so that 
	\[
	\mathrm{ker}(D\Phi(F)) \cap \mathrm{ker}(D\Phi(F) \circ \mathbb{J}) = \{X \in \C^{d \times N} \mid FX^\ast = 0 \mbox{ and } \langle f_j, x_j \rangle = 0 \; \forall \; j = 1,\ldots,N-1\}.
	\]
	We can also show that
	\[
	D^2 \Phi(F)(X,X) = \left(-2 X X^\ast, (2\|x_j\|^2)_{j=1}^{N-1}\right).
	\]
	
Now suppose that $F$ is orthodecomposable. As in the proof of \Cref{prop:critical points}, we can assume without loss of generality that 
	\[
	F = \begin{pmatrix} 
		F_1 & 0 \\
		0 & F_2
		\end{pmatrix},
    \]
for some submatrices $F_1 \in \C^{d_1 \times N_1}$ and $F_2 \in \C^{d_2 \times N_2}$ (we use a different indexing convention for dimensions than in the proof of \Cref{prop:critical points} for ease of generalization later on). Furthermore, suppose that the blocks $F_1$ and $F_2$ are not themselves orthodecomposable. We then claim that 
    \begin{equation}\label{eqn:spanning_vector}
    \mathrm{image}(D\Phi(F))^\perp = \mathrm{span}\bigg\{ 
    \biggl(\begin{pmatrix} 
		\Id_{d_1} & 0 \\
		0 & 0
		\end{pmatrix}, (\underbrace{1,\ldots,1}_{N_1},\underbrace{0,\ldots,0}_{N_2-1}) \biggr)
    \bigg\} \subset \hermitian(d) \times \R^{N-1},
    \end{equation}
    where the orthogonal complement is taken with respect to the inner product $\llangle \cdot ,\cdot \rrangle$ on $\mathcal{H}^{d\times d} \times \R^{N-1}$ defined by
    \[
    \llangle (\zeta_1,\xi_1),(\zeta_2,\xi_2)\rrangle := \mathrm{Re} \;  \mathrm{trace}(\zeta_2^\ast \zeta_1) + \xi_2^* \xi_1.
    \]
Indeed, a straightforward computation shows that the vector given in \eqref{eqn:spanning_vector} belongs to $\mathrm{image}(D\Phi(F))^\perp$, and we see that this space is one-dimensional as follows. By standard arguments, it suffices to show that the nullity of the dual map $D\Phi(F)^\ast$ is equal to one and Equation 5.2.7 of \cite{mcduff2017introduction} implies that it suffices to show that the map taking an element of $\mathcal{H}(d) \times \R^{N-1}$ to its infinitesimal vector field evaluated at $F$ has one dimensional kernel. By arguments similar to those used in the proof of \Cref{prop:critical points}, this is equivalent to showing that the space of solutions to the system of equations \eqref{eqn:orthodecomposable} is one-dimensional, which holds under our assumption that $F_1$ and $F_2$ are not themselves orthodecomposable (using the fact that the eigenspaces of a skew-Hermitian matrix are orthogonal).

Based on the work in the previous two paragraphs, the cone $C_F$ is described more explicitly as the set of $X \in \C^{d \times N}$ satisfying 
\begin{equation}\label{eqn:subspace_conditions}
FX^\ast = 0, \qquad \qquad \langle f_j,x_j\rangle = 0 \quad \forall \; j=1,\ldots,N-1,
\end{equation}
and 
\begin{equation}\label{eqn:cone_equation}
        0 = \left\llangle \biggl(-2 X X^\ast, (2\|x_j\|^2)_{j=1}^{N-1}\biggr),     \biggl(\begin{pmatrix} 
		\Id_{d_1} & 0 \\
		0 & 0
		\end{pmatrix}, (1,\ldots,1,0,\ldots,0) \biggr) \right\rrangle = 2\sum_{j=1}^{N_2} \|x^{12}_j\|^2 - 2\sum_{j=1}^{N_1} \|x^{21}_j\|^2,
\end{equation}
where we express $X$ as a block matrix with blocks $X^{km}$, $k,m \in \{1,2\}$, of size $d_k \times N_m$ whose column vectors are denoted $x_j^{km}$. 

We can verify that the intersection of the subspace defined by \eqref{eqn:subspace_conditions} and the solution set of the indefinite quadratic equation \eqref{eqn:cone_equation} yields a singular cone by showing that it is not a linear subspace. To do so, choose 
	\[
	X = \begin{pmatrix} 
		0 & X_2 \\
		X_1 & 0
		\end{pmatrix}
    \]
such that $F_1X_1^\ast = 0$ and $F_2 X_2^\ast = 0$, so that $FX^\ast = 0$. The condition $\left<f_j,x_j\right>$ holds automatically due to the block structures of $F$ and $X$, hence \eqref{eqn:subspace_conditions} is satisfied. The matrices $X_j$ can be chosen to be nonzero, since the dimension of the row space of $F_j$ is at most $d_j < N_j$. We can therefore scale the $X_j$'s to have equal Frobenius norm, which means that \eqref{eqn:cone_equation} is also satisfied, so that $X \in C_F$. Next, we observe that 
	\[
	X' = \begin{pmatrix} 
		0 & X_2 \\
		-X_1 & 0
		\end{pmatrix}
    \]
also lies in $C_F$. However, the matrix $X-X'$ does not satisfy \eqref{eqn:cone_equation}, and this proves our claim that $C_F$ is not a linear subspace.

\begin{example}
    Here we provide a concrete example of the structure described above. Let $F$ denote the orthodecomposable unit norm tight frame
    \[
    F = \left(\begin{array}{cccc}
    1 & 1 & 0 & 0 \\
    0 & 0 & 1 & 1 \end{array}\right) \in \mathcal{F}_{(2,2)}^{2,4}(1,1,1,1) =: \mathcal{F}.
    \]
    The cone $C_F$ can be described explicitly as the set of $2 \times 4$ matrices $X$ satisfying equations \eqref{eqn:subspace_conditions} and \eqref{eqn:cone_equation}. Using the $x_j^{km}$ notation from above, we write 
    \[
    X = \left(\begin{array}{cccc}
    x_1^{11} & x_2^{11} & x_1^{12} & x_2^{12} \\
    x_1^{21} & x_2^{21} & x_1^{22} & x_2^{22}
    \end{array}\right).
    \]
    The subspace equations \eqref{eqn:subspace_conditions} then tell us that
    \[
    \left(\begin{array}{cc}
    0 & 0 \\
    0 & 0 \end{array}\right)
    = FX^\ast =
    \left(\begin{array}{cc}
    \overline{x}_1^{11} + \overline{x}_2^{11} & \overline{x}_1^{21} + \overline{x}_2^{21} \\
    \overline{x}_1^{12} + \overline{x}_2^{12} & \overline{x}_1^{22} + \overline{x}_2^{22}
    \end{array}\right)
    \qquad \Leftrightarrow \qquad x_{1}^{km} = - x_2^{km} \quad \forall \; k,m
    \]
    and, letting $f_j$ and $x_j$ denote the columns of $F$ and $X$, respectively, 
    \[
     0 = \langle f_j, x_j \rangle \quad \forall \; j \in \{1,2,3,4\} \qquad \Leftrightarrow \qquad x_1^{11} = x_2^{11} = x_1^{22} = x_2^{22} = 0.
    \]
    The cone equation \eqref{eqn:cone_equation} then simply reads
    \[
    2 \lvert x_1^{12} \rvert^2 = 2 \lvert x_1^{21} \rvert^2 \qquad \Leftrightarrow \qquad  \lvert x_1^{12} \rvert =  \lvert x_1^{21} \rvert.
    \]
    Putting all of this together, the cone $C_F$ is 
    \[
    C_F = \left\{ A(\lambda,\theta,\phi) \mid \lambda, \theta, \phi \in \R \right\}, \qquad \mbox{where} \qquad 
    A(\lambda,\theta,\phi):= \lambda \left(\begin{array}{cccc}
    0 & 0 & e^{\sqrt{-1} \theta} &  - e^{\sqrt{-1} \theta} \\
    e^{\sqrt{-1} \phi} &  - e^{\sqrt{-1} \phi} & 0 & 0 \end{array}\right).
    \]
    Interpreted as a subset of $\C^2 = \{(\lambda_1 e^{\sqrt{-1}\theta},\lambda_2 e^{\sqrt{-1}\phi})\}$, this is precisely the cone over the Clifford torus.
    
    Now consider the map defined by 
    \begin{equation}\label{eqn:cone_map}
    A(\lambda,\theta,\phi) \mapsto \sqrt{1-\lambda^2} F + A(\lambda,\theta,\phi) = 
    \left(\begin{array}{cccc}
    \sqrt{1-\lambda^2} & \sqrt{1-\lambda^2} & \lambda e^{\sqrt{-1} \theta} &  - \lambda e^{\sqrt{-1} \theta } \\
    \lambda e^{\sqrt{-1} \phi} &  - \lambda e^{\sqrt{-1} \phi} & \sqrt{1-\lambda^2} & \sqrt{1-\lambda^2}
    \end{array}\right),
    \end{equation}
    with domain $\{A(\lambda,\theta,\phi) \mid \lambda \in [0,1)\}$, a neighborhood of the origin in $C_F$. We claim that the image of this map is an open subset of $\mathcal{F}$ intersected with a \emph{slice} of the $\left(\mathrm{U}(2) \times \mathrm{U}(1)^4)\right)/\mathrm{U}(1)$-action on $\C^{2 \times 4}$ through $F$---that is, a subset $\mathcal{S} \subset \C^{2 \times 4}$ containing $F$ such that the orbit of any point in $\C^{2 \times 4}$ sufficiently close to $F$ has a unique representative in $\mathcal{S}$.\footnote{Strictly speaking, we should consider the group modulo the isotropy group of $F$ when constructing the slice, but we are suppressing this technical detail for the sake of brevity.}
    
    Specifically, the slice $\mathcal{S}$ is the set of matrices of the form
    \[
    \left(\begin{array}{cccc}
    \alpha_1 & \alpha_2 & \beta_1 e^{\sqrt{-1}\theta} & \beta_2 e^{\sqrt{-1}\theta} \\
    \gamma_1 e^{\sqrt{-1}\phi} & \gamma_2 e^{\sqrt{-1}\phi} & \delta_1 & \delta_2
    \end{array}\right),
    \]
    where $\alpha_j$ and $\delta_j$ are positive real numbers, and $\beta_j$, $\gamma_j$, $\theta$, $\phi$ are arbitrary real numbers. The proof that $\mathcal{S}$ is a slice is somewhat technical, so we omit it. 
    
    It is then straightforward to show that $\mathcal{F} \cap \mathcal{S}$ is equal to the image of our map (i.e., points of the form \eqref{eqn:cone_map}), which implies that a neighborhood of $F$ in $\mathcal{F}$ is identified with the product of a neighborhood of the origin in $C_F$ and the group. 
\end{example}

\paragraph{General Orthodecomposable FUNTFs.} Now suppose that $F$ is a general orthodecomposable FUNTF. After applying isometries as necessary, we can assume without loss of generality that $F$ is a block diagonal matrix $F = \mathrm{diag}(F_1,\ldots,F_\ell)$, where $F_j$ a non-orthodecomposable matrix of size $d_j \times N_j$. Similar computations show that the cone $C_F$ consists of matrices $X \in \C^{d \times N}$ satisfying the linear conditions \eqref{eqn:subspace_conditions} and the system of quadratic equations
\begin{equation*}
    \sum_{m \neq k} \left(\sum_{j=1}^{N_m} \|x_j^{km}\|^2 - \sum_{j=1}^{N_k} \|x_j^{mk}\|^2 \right) = 0 \qquad \forall \; k = 1,\ldots, \ell,
\end{equation*}
where we express $X$ as a block matrix with blocks $X^{km}$ of size $d_k \times N_m$ and with column vectors denoted $x_j^{km}$. Arguments similar to the above show that $C_F$ is a singular cone.

\paragraph{Orthodecomposable Frames in Arbitrary Frame Spaces.} These descriptions of the local structure of FUNTF space near orthodecomposable frames do not intrinsically use the unit norm condition---they only rely on the assumption that the columns have some fixed collection of norms. The arguments therefore apply to describe singularities of spaces of tight frames with fixed norms.

In fact, similar local characterizations can be derived near orthodecomposable frames in any frame space $\frames_{\blam}(\br)$ with an added technical step. The result of Arms, Marsden, and Moncrief specifically treats zero level sets of momentum maps, and extends trivially to handle level sets of fixed points of the $\mathrm{Ad}^\ast$-action on the dual of the Lie algebra. The general frame space $\frames_{\blam}(\br)$ is a level set of a coadjoint orbit: $\frames_{\blam}(\br) = \Phi^{-1}(\mathcal{O}_{-\blam} \times \{\br\})$. The standard \emph{shifting trick} of symplectic geometry (see, e.g., \cite[p. 376]{Sjamaar:1991fe}) can be used to realize $\frames_{\blam}(\br)$ as the 0-level set of an associated momentum map on the symplectic manifold $\C^{d \times N} \times \mathcal{O}_{-\blam}$; more specifically, this space is endowed with a product form, where the symplectic form on the second factor is $-\omega^{\mathrm{KKS}}$ with $\omega^{\mathrm{KKS}}$ being the canonical Kirillov--Kostant--Souriau symplectic form on a coadjoint orbit defined in~\eqref{eq:KKS}. After applying the shifting trick, similar computations can be done to describe the local geometry of singular points.

\section{Toric Geometry of Frame Space}\label{sec:toric_structure}

\subsection{Toric Symplectic Manifolds}\label{sec:toric_symplectic_manifold}

Recall that our goal is to show that the collection of full-spark frames in $\frames_{\blam}(\br)$ has full measure. As mentioned in the introduction, the natural measure on $\frames_{\blam}(\br)$ is the Hausdorff measure it inherits as a compact subset of $\C^{d \times N}$. This measure is not so easy to get a handle on directly, but symplectic geometry provides a more tractable approach. 

All symplectic manifolds are equipped with natural measures. Specifically, suppose $(M,\omega)$ is a symplectic manifold of dimension $2n$. By the non-degeneracy of the symplectic form $\omega$, the maximal wedge power 
\[
\omega^{\wedge n} = \underbrace{\omega \wedge \dots \wedge \omega}_n
\]
is nowhere-vanishing, and hence defines a volume form and associated \emph{symplectic} or \emph{Liouville measure} $m_{\omega}$: for a Borel set $U \subset M$, $m_{\omega}(U) := \int_U \omega^{\wedge n}$.

While $\frames_{\blam}(\br)$ is not symplectic, by \Cref{prop:full reduction} its quotient 
\[
\frames_{\blam}(\br)/(\unitary \times G) \simeq \C^{d \times N} \sslash_{\mathcal{O}_{-\blam} \times \{\br\}} (\unitary \times G)
\]
is. Moreover, since $\C^{d \times N}$ is K\"ahler---i.e., it is a complex manifold with a Hermitian metric whose negative imaginary part defines the standard symplectic form and whose real part defines the standard Riemannian metric---so is the symplectic reduction, and the symplectic measure agrees with the pushforward of Hausdorff measure on $\frames_{\blam}(\br)$ by the quotient map $\frames_{\blam}(\br) \to \frames_{\blam}(\br)/(\unitary \times G)$~\cite[Theorem~3.1]{hitchin_hyperkahler_1987}.

Therefore, to prove our main theorem it suffices to prove that the equivalence classes of full-spark frames have full measure in $\frames_{\blam}(\br)/(\unitary \times G)$, which we can do symplectically. The key is that $\frames_{\blam}(\br)/(\unitary \times G)$ admits a Hamiltonian action of a high-dimensional torus, which considerably simplifies the task of understanding the symplectic measure.

If a $2n$-dimensional symplectic manifold $(M,\omega)$ admits a Hamiltonian action of a torus $U(1)^k$, then the associated momentum map $\Phi: M \to \left(\mathfrak{u}(1)^k\right)^\ast \simeq \R^k$ has convex image and connected level sets:

\begin{thm}[Atiyah~\cite{Atiyah:1982ih} and Guillemin--Sternberg~\cite{Guillemin:1982gx}] \label{thm:convexity}
	With notation as above:
	\begin{itemize}
		\item For any $v \in \R^k$, $\Phi^{-1}(v)$ is either empty or connected.
		\item The image $\Phi(M)$ is the convex hull of the images of the fixed points of the torus action.
	\end{itemize}
\end{thm}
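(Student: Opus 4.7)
The plan is to prove both statements via a combined Morse-theoretic argument, following the classical strategy of Atiyah and Guillemin--Sternberg. I would assume throughout that $M$ is compact and connected (this is the setting in which the theorem is stated, and in any case the intended application to $\frames_{\blam}(\br)/(\unitary \times G)$ is compact as a quotient of a compact set). The foundation of the argument is a detailed local analysis of the momentum map: for each $\xi$ in the Lie algebra of $\operatorname{U}(1)^k$, the component function $\Phi^\xi := \langle \Phi, \xi \rangle$ has, by definition of a momentum map, the property that its Hamiltonian vector field generates the one-parameter subgroup $\exp(t\xi)$. Its critical points are therefore precisely the zeros of that vector field, i.e., the fixed points of the subgroup.

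The first key step is to show that each $\Phi^\xi$ is a Morse--Bott function whose critical submanifolds all have even index and even coindex. To do this I would invoke the equivariant Darboux theorem to linearize the action near any fixed point, write the action in weight-space coordinates as a sum of rotations $e^{i m_j t}$ in symplectic planes, and compute that the Hessian of $\Phi^\xi$ splits into two-dimensional blocks of the form $\pm m_j\langle\xi,\cdot\rangle(dx_j^2 + dy_j^2)$ — hence each Morse--Bott index is necessarily even.

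The second step is to leverage the classical ``lacunary principle'' of Morse theory: a Morse--Bott function on a compact connected manifold with no critical submanifolds of odd index has connected level sets. Applied first to rational $\xi$ (which generate subtori $\operatorname{U}(1) \hookrightarrow \operatorname{U}(1)^k$), then extended by a density/limit argument to all $\xi$, this yields connectedness of level sets of every $\Phi^\xi$. The first conclusion of \Cref{thm:convexity} — connectedness of $\Phi^{-1}(v)$ — then follows by induction on $k$: view $\Phi = (\Phi', \Phi^\xi)$ as a fibration and use that connected level sets of $\Phi'$ restrict to connected level sets on each $(\Phi^\xi)^{-1}(\text{const})$.

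For the convexity statement I would proceed by induction on $k$. The base case $k=1$ is immediate since the continuous image of a connected compact set in $\R$ is a closed interval, whose endpoints are the max/min of $\Phi$ and hence fixed points. For the inductive step, the connectedness of level sets (applied to various subtori) together with the local normal form around fixed points — which shows that the image of a neighborhood of a fixed point is locally a rational polyhedral cone — implies that $\Phi(M)$ is locally convex, and a standard topological argument (local convexity plus connectedness plus compactness) upgrades this to global convexity of $\Phi(M)$. That $\Phi(M)$ is the convex hull of the images of the fixed points, rather than some larger convex set, comes from examining supporting hyperplanes: any boundary point of $\Phi(M)$ is a critical value of some $\Phi^\xi$, and repeating the argument on the critical submanifold reduces to lower-dimensional fixed-point sets. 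I expect the main obstacle to be the careful execution of the local-to-global passage from local convexity to global convexity — this requires showing that the image of $M$ under $\Phi$ cannot have ``holes'' or concavities, and is the most delicate topological component of the argument; the Morse-theoretic setup in the first two steps, while technical, is fairly mechanical once the equivariant Darboux normal form is in hand.
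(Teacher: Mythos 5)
The paper does not prove this theorem; it is the classical Atiyah--Guillemin--Sternberg convexity theorem, and the paper simply cites \cite{Atiyah:1982ih,Guillemin:1982gx}. Your sketch follows the standard Morse-theoretic proof (closest in structure to Atiyah's original argument for connectedness, with a ``local convexity $\Rightarrow$ global convexity'' step for the polytope statement that is closer to the Guillemin--Sternberg/Condevaux--Dazord--Molino line and to modern textbook treatments). The skeleton is sound: equivariant Darboux gives the linear model near a fixed point; the Hessian of $\Phi^\xi$ then decomposes into $2\times 2$ blocks, so every critical submanifold has even index and coindex; a Morse--Bott function on a compact connected manifold with no critical submanifold of index or coindex $1$ has connected level sets; and the convexity statement is obtained by induction on $k$.

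The one place where your sketch elides a genuine difficulty is the inductive step for connectedness. You write that one should ``view $\Phi = (\Phi',\Phi^\xi)$ as a fibration and use that connected level sets of $\Phi'$ restrict to connected level sets on each $(\Phi^\xi)^{-1}(\mathrm{const})$,'' but knowing that level sets of $\Phi'$ in $M$ are connected says nothing a priori about connectedness of level sets of $\Phi'$ restricted to a fiber of $\Phi^\xi$ --- that restriction is not the level set you control. What Atiyah's Lemma~2.2 actually does is the reverse: by Sard's theorem a generic level set $N := (\Phi')^{-1}(c')$ is a smooth submanifold that is connected by the inductive hypothesis, and the nontrivial work is to show that $\Phi^\xi\big|_N$ is again Morse--Bott with all indices and coindices even. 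This does not follow formally from the Morse--Bott property of $\Phi^\xi$ on $M$; it requires the equivariant normal form along the critical set and an argument that the restriction to $N$ still has nondegenerate transverse Hessian of the right signature, followed by a limiting argument to handle non-generic $c'$. This is the technical heart of the theorem and is the step your outline labels as a one-line deduction. Similarly, in the convexity half, Atiyah's own argument avoids the local-convexity route entirely: given $p,q \in \Phi(M)$, choose a projection $\pi\colon \R^k \to \R^{k-1}$ with $\pi(p) = \pi(q)$, note that $(\pi\circ\Phi)^{-1}(\pi(p))$ is connected by $A_{k-1}$, and observe that its image under $\Phi$ is a connected subset of a line containing both $p$ and $q$. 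This is slicker than the local-to-global passage you describe, and worth knowing, since the local-convexity argument you propose requires additional care at non-fixed points and some foundational work to justify that a closed, connected, locally convex subset of $\R^k$ is convex. Neither objection is fatal --- your outline is a recognizable account of a correct proof --- but the two flagged steps are where the real content lives.
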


In particular, when $M$ is (relatively) compact, (the closure of) $\Phi(M)$ is a bounded convex polytope $P$ called the \emph{moment polytope} associated to the Hamiltonian torus action.

In general, the Duistermaat--Heckman theorem~\cite{Duistermaat:1982hq} precisely describes the relationship between the pushforward measure $\Phi_\ast(m_\omega)$ and Lebesgue measure on the moment polytope $P$. Since we will not need the full statement, we restrict to the case when $k=n$, that is when $M$ admits a Hamiltonian action of a half-dimensional torus. In this case $M$ is called a \emph{toric symplectic manifold}. Toric symplectic manifolds are closely related to toric varieties~\cite{cox_toric_2011} and are completely classified by the combinatorics of the moment polytope~\cite{delzant_hamiltoniens_1988}.

\begin{thm}[Duistermaat–Heckman \cite{Duistermaat:1982hq}, see  also {\cite[Chapter~30]{CannasdaSilva:2001cg}}]\label{thm:duistermaat_heckman}
Let $M$ be a $2n$-dimensional toric symplectic manifold with moment polytope $P$. The pushforward measure $\Phi_\ast(m_\omega)$ is a constant multiple of Lebesgue measure on~$P$.
\end{thm}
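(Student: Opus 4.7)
The plan is to exploit the toric structure to reduce the pushforward computation to an essentially trivial integral over a product $P^\circ \times T^n$, where $P^\circ$ denotes the interior of the moment polytope. The key observation is that on $\Phi^{-1}(P^\circ)$, the toric manifold looks locally (in fact, globally up to fiber-bundle data) like action-angle coordinates on $\R^n \times T^n$ with the standard symplectic form, and in these coordinates pushing forward by $\Phi$ amounts to integrating out the torus angles.

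First I would analyze the fibers of $\Phi$ over points of $P^\circ$. Because $M$ is toric, the torus $T^n = U(1)^n$ acts effectively and in Hamiltonian fashion on the $2n$-dimensional manifold $M$, and because the torus is abelian its orbits lie in level sets of $\Phi$. A dimension count shows that generic orbits are $n$-dimensional Lagrangian submanifolds, equal in dimension to the regular level sets of $\Phi$. Combined with \Cref{thm:convexity}, which ensures nonempty level sets are connected, and with the standard fact that critical points of $\Phi$ lie in $\Phi^{-1}(\partial P)$, this implies that for each $p \in P^\circ$ the level set $\Phi^{-1}(p)$ is a single free $T^n$-orbit. Consequently $\Phi \colon \Phi^{-1}(P^\circ) \to P^\circ$ is a principal $T^n$-bundle whose total space carries a symplectic form.

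Next I would install action-angle coordinates $(I_1,\dots,I_n,\theta_1,\dots,\theta_n)$ on $\Phi^{-1}(P^\circ)$, taking $I_i$ to be the components of $\Phi$ with respect to a basis of the integral lattice in $\mathfrak{u}(1)^n$ and $\theta_i$ to be fiber coordinates for which the torus acts by translation. The defining Hamiltonian relation $\iota_{Y_{e_i}}\omega = -dI_i$, together with the Lagrangian property of the orbits (so that $\omega(\partial/\partial\theta_i,\partial/\partial\theta_j)=0$), forces
\[
\omega \;=\; \sum_{i=1}^n dI_i \wedge d\theta_i \;+\; \pi^\ast \beta
\]
for some $2$-form $\beta$ on $P^\circ$, where $\pi = \Phi$. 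Choosing the local section defining the $\theta$ coordinates to be Lagrangian (possible on the contractible, simply connected interior $P^\circ$ by a standard Moser-type argument, equivalently by the Arnold--Liouville theorem applied in this setting) kills $\beta$, and we obtain the standard action-angle form $\omega = \sum_i dI_i\wedge d\theta_i$.

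With this normal form in hand, the Liouville volume is
\[
\frac{\omega^{\wedge n}}{n!} \;=\; dI_1\wedge d\theta_1 \wedge \cdots \wedge dI_n \wedge d\theta_n \;=\; \pm\, dI_1\cdots dI_n\, d\theta_1\cdots d\theta_n,
\]
so pushing forward by $\Phi$ integrates out the angle variables over a torus of total volume $(2\pi)^n$, giving $\Phi_\ast m_\omega = (2\pi)^n\, dI_1\cdots dI_n$ on $P^\circ$. Since $\Phi^{-1}(\partial P)$ is a finite union of lower-dimensional symplectic strata (the fixed-point sets of subtori), it is Liouville-null, and $\partial P$ is Lebesgue-null, so the identity extends to all of $P$. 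The main obstacle is the step establishing the clean action-angle normal form: the principal bundle structure and angle coordinates are immediate, but trivializing $\omega$ on the base requires a careful choice of Lagrangian section (or equivalently a symplectic connection), which on the contractible open polytope interior ultimately reduces to the Arnold--Liouville theorem. Everything after that is bookkeeping.
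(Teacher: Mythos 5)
The paper cites Duistermaat--Heckman and Cannas da Silva for this theorem but does not supply a proof, so there is no in-paper argument to compare against. Your proposal is a correct, self-contained proof and matches the standard toric-case argument via action-angle coordinates that the cited references give. Two small remarks. First, the Lagrangian-section step that kills $\pi^\ast\beta$ is actually unnecessary for the pushforward computation: writing $\omega = \eta + \pi^\ast\beta$ with $\eta = \sum_i dI_i \wedge d\theta_i$, any term $\eta^{\wedge(n-k)}\wedge(\pi^\ast\beta)^{\wedge k}$ with $k \geq 1$ involves $n+k > n$ one-forms pulled back from the $n$-dimensional base and hence vanishes, so $\omega^{\wedge n} = \eta^{\wedge n}$ automatically and $\beta$ never enters the Liouville volume. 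Second, your claim that the fibers over $P^\circ$ are \emph{free} $T^n$-orbits (rather than merely locally free, i.e., quotients $T^n/\Gamma$ by a finite group) silently appeals to the Delzant structure of a compact toric symplectic manifold; \Cref{prop:regular points of momentum maps} only gives local freeness over regular values. This does not affect the theorem's conclusion, since a nontrivial generic stabilizer $\Gamma$ would simply rescale the constant by $1/|\Gamma|$, but it is worth being explicit that either you invoke Delzant's theorem or you settle for ``constant multiple'' without pinning down $(2\pi)^n$.
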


\subsection{Toric Structure of Frame Space}

\subsubsection{Circle Actions on Frame Space}

With \Cref{thm:duistermaat_heckman} in mind, our strategy is to show that the top stratum of $\frames_{\blam}(\br)/(\unitary \times G)$ is toric, and then to see that the image of the spark-deficient frames in the moment polytope has measure zero with respect to Lebesgue measure. 

We begin by defining a torus action on the full space of frames $\frames$ and then show that our construction descends to the symplectic quotient. Let $F \in \frames$ with column vectors $f_i \in \C^{d \times 1}$, $i = 1, \ldots, N$. For each $k = 1, \ldots, N$, let 
\[
\mu_{k1} \geq \mu_{k2} \geq \cdots \geq \mu_{k d} \geq 0
\]
denote the $d$ eigenvalues of the $d \times d$ Hermitian matrix
\[
f_1 f_1^\ast + f_2 f_2^\ast + \cdots + f_k f_k^\ast,
\]
arranged in decreasing order---each such sum is rank at most $k$, so we only need consider the first $k$ eigenvalues if $k<d$. The quantities $\mu_{kj}$ were dubbed \emph{eigensteps} by Cahill et al.~\cite{Cahill:2013jv}. 

To get well-defined circle actions, we make the assumption that the eigenvalues $\mu_{kj}$, $j=1,\ldots,k$, are all distinct---the necessity of this assumption is explained below in \Cref{rmk:circle_action_well_defined}. Let $u_{k1},\ldots,u_{k\min(k,d)}$ be the eigenvectors corresponding to the nonzero eigenvalues. For each $k = 1,\ldots,N$ and each $j = 1,\ldots,\min(k,d)$, define a $\torus$-action on $F$, denoted
\begin{align*}
    \phi_{kj}: \torus \times \frames \to \frames,
\end{align*}
by defining it at the frame vector level as
\begin{equation}\label{eqn:circle_action}
\phi_{kj}(t,f_i) = \left\{\begin{array}{cc}\exp(t \sqrt{-1} u_{kj}u_{kj}^\ast) f_i & \mbox{if $1 \leq i \leq k$} \\
f_i & \mbox{if $k+1 \leq i \leq N$,} \end{array}\right.
\end{equation}
where $t \in [0,2\pi)$.

\begin{remark}
To be precise, we are identifying $[0,2\pi) \approx \torus$ via $t \mapsto \exp(t\sqrt{-1})$. This identification induces an isomorphism $\R \approx \mathfrak{u}(1)$ via
\begin{equation}\label{eqn:identify_circle_lie_algebra}
    s \mapsto \sqrt{-1}s.
\end{equation}
Keeping track of the exact isomorphism used in this identification will be useful later on.
\end{remark}

\begin{remark}\label{rmk:circle_action_well_defined}
The assumption that the eigenvalues $\mu_{kj}$ are distinct means that the eigenspaces are all one-dimensional, which, in turn, gives a well-defined ordering of the eigenvectors $u_{kj}$. Without the isolated eigenvalues assumption, some of these actions would degenerate to $\mathrm{U}(\ell)$-actions, with $\ell$ the multiplicity of a repeated eigenvalue.
\end{remark}

\begin{remark}
The action defined by \eqref{eqn:circle_action} really defines a circle action; i.e., it is $2\pi$-periodic. Indeed, this follows from \Cref{lem:exp projection} below, and is also shown in \cite[Corollary 5.1.4]{Flaschka:2005dq}.
\end{remark}

Next we show that this action on $\frames$ induces a well-defined action on $\frames_{\blam}(\br)/(\unitary \times G)$. We do so in stages.

\begin{prop}\label{prop:action torus invariant}
    The circle action $\phi_{kj}$ commutes with the $\bt$ action.
\end{prop}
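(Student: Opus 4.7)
The plan is to show that the diagonal torus $\bt$ acts trivially on the data defining $\phi_{kj}$---specifically, on the partial frame-operator sums and the associated rank-one spectral projections---and then to observe that scalar-multiplication clearly commutes with matrix exponentiation of a Hermitian operator on $\C^d$.

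First I would unpack what the two actions do at the frame-vector level. An element $D = \operatorname{diag}(e^{\sqrt{-1}\theta_1},\dots,e^{\sqrt{-1}\theta_N}) \in \bt$ sends $F = [f_1 \mid \dots \mid f_N]$ to $FD^\ast$, whose $i$-th column is $e^{-\sqrt{-1}\theta_i} f_i$. Thus for each $k$,
\[
\sum_{i=1}^k (e^{-\sqrt{-1}\theta_i} f_i)(e^{-\sqrt{-1}\theta_i} f_i)^\ast = \sum_{i=1}^k |e^{-\sqrt{-1}\theta_i}|^2 f_i f_i^\ast = \sum_{i=1}^k f_i f_i^\ast,
\]
so each partial frame-operator sum $\sum_{i=1}^k f_i f_i^\ast$ is $\bt$-invariant. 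Consequently the ordered eigensteps $\mu_{k1} > \dots > \mu_{k\min(k,d)}$ are unchanged under the $\bt$-action, and although individual eigenvectors $u_{kj}$ are only defined up to a unit complex scalar, the rank-one orthogonal projection $u_{kj} u_{kj}^\ast$ onto the (assumed one-dimensional) $\mu_{kj}$-eigenspace is intrinsically defined, and therefore $\bt$-invariant.

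Next I would just compute both compositions columnwise. For $i > k$ both $\phi_{kj}(t, D\cdot F)$ and $D \cdot \phi_{kj}(t,F)$ have $i$-th column $e^{-\sqrt{-1}\theta_i}f_i$. For $1 \le i \le k$,
\[
\phi_{kj}(t, D\cdot F)_i = \exp(t\sqrt{-1}\, u_{kj} u_{kj}^\ast)\bigl(e^{-\sqrt{-1}\theta_i} f_i\bigr)
\]
by the invariance established above, while
\[
\bigl(D \cdot \phi_{kj}(t,F)\bigr)_i = e^{-\sqrt{-1}\theta_i}\exp(t\sqrt{-1}\, u_{kj} u_{kj}^\ast) f_i.
\]
Since $e^{-\sqrt{-1}\theta_i}$ is a scalar on $\C^d$, it commutes with the operator $\exp(t\sqrt{-1}\, u_{kj}u_{kj}^\ast) \in \operatorname{U}(d)$, so the two expressions coincide, proving $\phi_{kj}(t,\cdot)$ commutes with the $\bt$-action.

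There is essentially no obstacle: the only point requiring care is the phase-ambiguity in $u_{kj}$, which is neutralized by the fact that the action is implemented through the projection $u_{kj} u_{kj}^\ast$ rather than through the eigenvector itself. The same argument would break down without the distinct-eigensteps hypothesis from \Cref{rmk:circle_action_well_defined}, since then the projection onto a multi-dimensional eigenspace would still be $\bt$-invariant but the ``action'' would no longer be a circle in a canonical way.
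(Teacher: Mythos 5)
Your proof is correct and follows the same essential route as the paper's: reduce to a column-level computation and use that the circle action is left multiplication by a fixed unitary while the $\bt$ action is scalar multiplication on each column, so they commute. The paper's proof is a one-liner that simply asserts $\phi_{kj}(t,f_i e^{-\sqrt{-1}\theta}) = \phi_{kj}(t,f_i) e^{-\sqrt{-1}\theta}$ is ``clear from the formula''; you correctly identify and supply the hidden content in that assertion, namely that the partial frame operators $\sum_{i\leq k} f_i f_i^\ast$ (and hence the eigenprojections $u_{kj}u_{kj}^\ast$) are $\bt$-invariant, so the same unitary $\exp(t\sqrt{-1}\,u_{kj}u_{kj}^\ast)$ is applied whether one rephases first or second. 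Without that observation the column-level identity is not literally immediate, since $\phi_{kj}$ applied to $D\cdot F$ a priori uses the spectral data of $D\cdot F$; your proof is the more careful version of the paper's argument rather than a different approach.
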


\begin{proof}
    We need to show that if two frames $F_1$ and $F_2$ lie in the same $\bt$ orbit, then so do $\phi_{kj}(t,F_1)$ and $\phi_{kj}(t,F_2)$. Indeed, this holds since the formula for the action \eqref{eqn:circle_action} makes it clear that
    \[
    \phi_{kj}(t,f_i e^{-\sqrt{-1}\theta}) = \phi_{kj}(t,f_i) e^{-\sqrt{-1}\theta}
    \]
    for all $\theta$.
\end{proof}

\begin{prop}\label{prop:action unitary invariant for vectors}
    Let $F = \begin{bmatrix}f_1\mid \cdots \mid f_N\end{bmatrix} \in \frames$. The circle action $\phi_{kj}$ on the vectors of $F$ is $\unitary$-equivariant. That is, for all $A \in \unitary$, 
    \[
    \phi_{kj}(t,Af_i) = A \phi_{kj}(t,f_i)
    \]
    for all $t \in [0,2\pi)$ and all $i = 1,\ldots,N$.
\end{prop}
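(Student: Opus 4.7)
The plan is a direct computation, with the only subtlety being to track how the eigendata of the partial frame operator transforms under the left multiplication by $A$. Let $S_k(F) := f_1 f_1^\ast + \cdots + f_k f_k^\ast$ denote the $k$-th partial frame operator, and let $P_{kj}(F) := u_{kj} u_{kj}^\ast$ denote the orthogonal projection onto its $j$-th eigenspace (which is one-dimensional by the standing distinct-eigenvalue hypothesis). Note that $P_{kj}(F)$ is well-defined independent of the phase ambiguity in the choice of unit eigenvector $u_{kj}$, so we may as well rewrite \eqref{eqn:circle_action} for $i \leq k$ as
\[
\phi_{kj}(t, f_i) = \exp\!\bigl(t\sqrt{-1}\, P_{kj}(F)\bigr) f_i.
\]

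The key observation is the transformation law for the partial frame operator under left multiplication by $A \in \unitary$: writing $AF := \begin{bmatrix} Af_1 \mid \cdots \mid Af_N \end{bmatrix}$, we have
\[
S_k(AF) = \sum_{i=1}^k (Af_i)(Af_i)^\ast = A\,S_k(F)\,A^\ast.
\]
Conjugation by the unitary $A$ preserves the spectrum and sends the $j$-th eigenspace of $S_k(F)$ to the $j$-th eigenspace of $S_k(AF)$. Hence the rank-one projection onto the $j$-th eigenspace transforms as $P_{kj}(AF) = A\, P_{kj}(F)\, A^\ast$, which is again independent of the phase choice.

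Combining these facts, for $1 \leq i \leq k$ we compute
\[
\phi_{kj}(t, Af_i) = \exp\!\bigl(t\sqrt{-1}\, P_{kj}(AF)\bigr)(Af_i) = \exp\!\bigl(t\sqrt{-1}\, A P_{kj}(F) A^\ast\bigr) A f_i,
\]
and then use the standard identity $A\exp(X)A^\ast = \exp(AXA^\ast)$ for unitary $A$ to rewrite this as
\[
A \exp\!\bigl(t\sqrt{-1}\,P_{kj}(F)\bigr) A^\ast A f_i = A \exp\!\bigl(t\sqrt{-1}\, P_{kj}(F)\bigr) f_i = A\,\phi_{kj}(t, f_i).
\]
For $i > k$ the equivariance is immediate, since $\phi_{kj}$ fixes these vectors. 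I do not anticipate a genuine obstacle here; the only conceptual care needed is in verifying that $P_{kj}$ (rather than $u_{kj}$) is what actually appears in the definition, so that the phase ambiguity of the eigenvectors does not interfere with the equivariance statement.
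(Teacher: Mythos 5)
Your proof is correct and follows essentially the same route as the paper's: observe that $S_k(AF) = A\,S_k(F)\,A^\ast$, so the relevant spectral projector (equivalently, the eigenvector $u_{kj}$) transforms by conjugation, and then use $\exp(AXA^\ast) = A\exp(X)A^\ast$. Your reformulation in terms of the rank-one projection $P_{kj}$ rather than the eigenvector $u_{kj}$ is a minor but tidy clarification that makes the phase-ambiguity issue explicit, which the paper leaves implicit.
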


\begin{proof}
    The claim is clear when $k + 1 \leq i \leq N$, since the action is trivial in that case. It remains to check the claim when $1 \leq i \leq k$. In this case, we have
    \[
        \phi_{kj}(t,Af_i) = \exp(t\sqrt{-1}Au_{kj}u_{kj}^\ast A^\ast) A f_i,
    \]
    since $(\mu_{kj},Au_{kj})$ are the corresponding eigenpair for the $k$th partial frame operator of $AF$. In turn, we have 
    \[
        \exp(t\sqrt{-1}Au_{kj}u_{kj}^\ast A^\ast) f_i = A \exp(t\sqrt{-1}u_{kj}u_{kj}^\ast) A^\ast A f_i = A \exp(t\sqrt{-1}u_{kj}u_{kj}^\ast) f_i = A \phi_{kj}(t,f_i). \qedhere
    \]
\end{proof}

Since the circle action $\phi_{kj}$ is defined on $F = \begin{bmatrix}f_1\mid\cdots\mid f_N\end{bmatrix} \in \frames$ through the action on the individual frame vectors, we have the following corollary.

\begin{cor}\label{cor:action unitary invariant}
    The circle action $\phi_{kj}$ on $\frames$ commutes with the $\unitary$ action. That is, for all $A \in \unitary$, 
    \[
    \phi_{kj}(t,AF) = A \phi_{kj}(t,F)
    \]
    for all $t \in [0,2\pi)$ and all $F \in \frames$.
\end{cor}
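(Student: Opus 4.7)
The plan is to deduce this corollary directly from the column-wise equivariance established in \Cref{prop:action unitary invariant for vectors}. Since the circle action $\phi_{kj}$ on a frame $F = \begin{bmatrix} f_1 \mid \cdots \mid f_N \end{bmatrix} \in \frames$ is defined by acting independently on each column $f_i$, and the left action of $A \in \unitary$ on $F$ is likewise column-wise (namely $AF = \begin{bmatrix} A f_1 \mid \cdots \mid A f_N \end{bmatrix}$), it suffices to apply the vector-level identity $\phi_{kj}(t, A f_i) = A \phi_{kj}(t, f_i)$ to each of the $N$ columns and reassemble.

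The one subtlety to flag is that the eigensteps $\mu_{kj}$ and corresponding eigenvectors $u_{kj}$ which define $\phi_{kj}$ depend on the frame. So I would first note that the $k$th partial frame operator of $AF$ is
\[
\sum_{i=1}^{k}(Af_i)(Af_i)^\ast = A\left(\sum_{i=1}^{k} f_i f_i^\ast\right)A^\ast,
\]
which shares the same spectrum as the $k$th partial frame operator of $F$ (so the eigensteps of $AF$ coincide with those of $F$, and in particular the distinctness hypothesis underlying the definition of $\phi_{kj}$ is preserved), and its $j$th eigenvector is $A u_{kj}$. This is precisely the fact already used inside the proof of \Cref{prop:action unitary invariant for vectors}, so no additional work is needed.

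Assembling column-by-column: for each $i$,
\[
\phi_{kj}(t, AF)_i = \phi_{kj}(t, Af_i) = A\,\phi_{kj}(t, f_i) = \bigl(A\,\phi_{kj}(t,F)\bigr)_i,
\]
which gives $\phi_{kj}(t, AF) = A\,\phi_{kj}(t, F)$, as claimed. The main (and essentially only) obstacle was already handled in the previous proposition; the corollary itself is a routine packaging of that vector-level statement into matrix form, so I expect no new difficulty and would keep the write-up to just a couple of sentences.
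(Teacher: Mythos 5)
Your proof is correct and follows exactly the same route as the paper: the corollary is deduced directly from \Cref{prop:action unitary invariant for vectors} by observing that $\phi_{kj}$ acts column-by-column and applying the vector-level equivariance to each column. The paper in fact gives no separate proof and simply notes this as an immediate consequence; your version just spells out the reassembly step a bit more explicitly.
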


The next proposition is obvious from the formula for $\phi_{kj}$.

\begin{prop}\label{prop:action preserves norms}
    The circle action $\phi_{kj}$ preserves norms of frame vectors.
\end{prop}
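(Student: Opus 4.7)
The plan is very short because, as the authors note, the statement is essentially immediate from the formula \eqref{eqn:circle_action}. The only content is verifying that the linear map applied to $f_i$ in the nontrivial case $1 \le i \le k$ is unitary.

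First I would split into cases according to the index $i$. For $k+1 \le i \le N$ the action is literally the identity on $f_i$, so $\|\phi_{kj}(t,f_i)\| = \|f_i\|$ with nothing to check. For $1 \le i \le k$, the map $f_i \mapsto \exp(t\sqrt{-1}\, u_{kj}u_{kj}^\ast) f_i$ is multiplication by the exponential of a specific matrix, and the claim reduces to showing this exponential is unitary.

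The key observation is that $u_{kj}$ is a (unit-length) eigenvector, so the rank-one matrix $u_{kj}u_{kj}^\ast$ is Hermitian (in fact, it is the orthogonal projection onto the line spanned by $u_{kj}$). Consequently $t\sqrt{-1}\, u_{kj}u_{kj}^\ast$ is skew-Hermitian for every $t \in \R$, and therefore its exponential $\exp(t\sqrt{-1}\, u_{kj}u_{kj}^\ast)$ lies in $\unitary$. Applying a unitary operator to $f_i$ preserves its norm, giving $\|\phi_{kj}(t,f_i)\| = \|f_i\|$ in this case as well.

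There is no real obstacle here; the only point that deserves a sentence of care is to confirm that $u_{kj}$ is taken to be unit-length (so that $u_{kj}u_{kj}^\ast$ is genuinely the rank-one projector and not merely a Hermitian rank-one matrix), which is the standing convention for eigenvectors in this section and is needed for the identification with a projection but not for the unitarity argument itself. For completeness one can alternatively diagonalize $u_{kj}u_{kj}^\ast$ explicitly and compute $\exp(t\sqrt{-1}\, u_{kj}u_{kj}^\ast) = \Id_d + (e^{t\sqrt{-1}\|u_{kj}\|^2} - 1)\, \frac{u_{kj}u_{kj}^\ast}{\|u_{kj}\|^2}$, from which unitarity is transparent. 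This makes the norm-preservation claim across all $i$ immediate, completing the proof.
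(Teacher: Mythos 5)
Your proof is correct and matches the paper's intent: the paper simply declares this result ``obvious from the formula for $\phi_{kj}$,'' and your argument---that $t\sqrt{-1}\,u_{kj}u_{kj}^\ast$ is skew-Hermitian since $u_{kj}u_{kj}^\ast$ is Hermitian, so $\exp(t\sqrt{-1}\,u_{kj}u_{kj}^\ast)$ is unitary and therefore norm-preserving, with the remaining indices acted on trivially---is exactly the natural justification the authors had in mind.
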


The following proposition is similar, but is less obvious.

\begin{prop}\label{prop:action preserves spectrum}
	The circle action $\phi_{kj}$ preserves the frame operator.
\end{prop}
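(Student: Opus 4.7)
The plan is to reduce the claim to a simple commutativity statement. Write $S_k := \sum_{i=1}^k f_i f_i^\ast$ for the $k$th partial frame operator and set $U_t := \exp(t\sqrt{-1}\, u_{kj} u_{kj}^\ast)$, which is unitary since $u_{kj} u_{kj}^\ast$ is Hermitian. After applying $\phi_{kj}(t,\cdot)$, the new frame $F' = [f_1' \mid \cdots \mid f_N']$ has $f_i' = U_t f_i$ for $i \le k$ and $f_i' = f_i$ for $i > k$, so
\[
F'(F')^\ast = \sum_{i=1}^k U_t f_i f_i^\ast U_t^\ast + \sum_{i=k+1}^N f_i f_i^\ast = U_t S_k U_t^\ast + (FF^\ast - S_k).
\]
Hence the claim reduces to showing $U_t S_k U_t^\ast = S_k$, i.e.\ that $U_t$ commutes with $S_k$.

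The key observation is that $u_{kj}$ is, by definition, an eigenvector of the Hermitian matrix $S_k$. Consequently the one-dimensional subspace $\mathrm{span}(u_{kj})$ and its orthogonal complement are both $S_k$-invariant, and $S_k$ commutes with the orthogonal projection $u_{kj}u_{kj}^\ast$ onto this subspace. Since any polynomial (hence any power-series function) of $u_{kj}u_{kj}^\ast$ commutes with anything that commutes with $u_{kj}u_{kj}^\ast$, we get $[S_k, U_t] = 0$, and therefore $U_t S_k U_t^\ast = S_k$. Combining with the displayed equation gives $F'(F')^\ast = FF^\ast$, as desired.

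There is no real obstacle here: once one notices that the generator $\sqrt{-1}\, u_{kj} u_{kj}^\ast$ of $U_t$ is constructed precisely as a spectral projection of $S_k$, the commutativity is immediate from the spectral theorem. The only thing to take care of is bookkeeping: the action is defined to do nothing to the vectors $f_{k+1},\dots,f_N$, so they contribute an unchanged piece $FF^\ast - S_k$ to the frame operator and cause no trouble. In particular, this argument also shows that the $k$th partial frame operator itself, and hence all of the eigensteps $\mu_{k1},\dots,\mu_{k\min(k,d)}$, are preserved by $\phi_{kj}$ — a fact that will be useful when identifying the momentum map of the resulting torus action.
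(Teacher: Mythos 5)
Your proof is correct, and it takes a cleaner route than the paper. Both arguments reduce to showing $U_t S_k U_t^\ast = S_k$ where $U_t = \exp(t\sqrt{-1}\,u_{kj}u_{kj}^\ast)$; the difference is in how that is established. You observe that $u_{kj}u_{kj}^\ast$ is a spectral projection of $S_k$ and so commutes with it, hence any power series in the projection (in particular $U_t$) commutes with $S_k$, and the conjugation is trivially the identity. The paper instead invokes \Cref{lem:exp projection} to write $U_t = \Id_d + (e^{t\sqrt{-1}}-1)u_{kj}u_{kj}^\ast$ explicitly, expands $U_t S_k U_t^\ast$ using the spectral decomposition of $S_k$ and orthogonality of distinct eigenvectors, and checks directly that the resulting correction terms cancel. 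Your commutativity argument is shorter and makes the underlying mechanism (conjugation by a unitary that commutes with $S_k$) transparent; the paper's explicit expansion is slightly more laborious but reuses \Cref{lem:exp projection}, which it needs anyway to verify that $\phi_{kj}$ is $2\pi$-periodic. Your closing observation that the argument also shows all partial frame operators $S_m$ for $m \ge k$ are preserved (hence all eigensteps from level $k$ on) is a nice bonus, though strictly speaking you only argued it for $S_k$ itself and for $S_N = FF^\ast$; the same commutativity reasoning does apply to intermediate $S_m$ because $S_m - S_k$ is untouched by the action, so the extension is immediate.
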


The proof uses a lemma.

\begin{lem}\label{lem:exp projection}
	If $P$ is a $d \times d$ projection matrix, then $\exp(t\sqrt{-1}P) = \Id_d + (e^{t\sqrt{-1}} -1)P$.
\end{lem}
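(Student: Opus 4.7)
The plan is to use the power series definition of the matrix exponential together with the defining property of a projection, namely $P^2 = P$, which inductively yields $P^n = P$ for every $n \geq 1$.

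Starting from
\[
\exp(t\sqrt{-1}P) = \sum_{n=0}^{\infty} \frac{(t\sqrt{-1})^n}{n!} P^n = \Id_d + \sum_{n=1}^{\infty} \frac{(t\sqrt{-1})^n}{n!} P^n,
\]
I would substitute $P^n = P$ for $n \geq 1$ to pull the projection out of the sum, obtaining
\[
\exp(t\sqrt{-1}P) = \Id_d + \left(\sum_{n=1}^{\infty} \frac{(t\sqrt{-1})^n}{n!}\right) P = \Id_d + (e^{t\sqrt{-1}} - 1) P,
\]
since the bracketed series is the Taylor expansion of $e^{t\sqrt{-1}}$ with the constant term removed. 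This gives the claimed identity.

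There is essentially no obstacle here; the only minor technical point is justifying the termwise manipulation of the series, which is standard since the matrix exponential series converges absolutely in operator norm and $\|P^n\| = \|P\|$ is uniformly bounded.
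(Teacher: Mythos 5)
Your proof is correct and matches the paper's argument exactly: the paper also invokes the Taylor series of the matrix exponential together with $P^2 = P$ (hence $P^n = P$ for $n \geq 1$), just stated more tersely. Your additional remark about absolute convergence justifying the termwise manipulation is a fine, if routine, point.
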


\begin{proof}
	This follows from the Taylor series representation of the matrix exponential and the property that $P^2 = P$.
\end{proof}

\begin{proof}[Proof of \Cref{prop:action preserves spectrum}]
	Suppose $F = \begin{bmatrix} f_1 \mid f_2 \mid \cdots \mid f_N \end{bmatrix} \in \C^{d \times N}$ is a frame and let $S = F F^* = f_1 f_1^* + \dots + f_N f_N^*$ be its frame operator with spectrum $\lambda_1 \geq \dots \geq \lambda_d >0$. Let $k \in \{1, \dots , N\}$ and let $S_k = f_1 f_1^* + \dots + f_k f_k^*$ be the partial frame operator with spectrum $\mu_{k1} \geq \dots \geq \mu_{kd} \geq 0$. Let $j \in \{1, \dots , \min(k,d)\}$, and consider the torus action associated with the eigenvalue $\mu_{kj}$. Then, by definition,
	\[
		t \cdot F = \begin{bmatrix} \exp(t \sqrt{-1} u_{kj}u_{kj}^\ast) f_1 \mid \cdots \mid \exp(t \sqrt{-1} u_{kj}u_{kj}^\ast) f_k \mid f_{k+1}\mid \cdots \mid f_N \end{bmatrix}
	\]
	and the corresponding frame operator is
	\begin{align*}
		S_t & := (t \cdot F)(t \cdot F)^* = \exp(t \sqrt{-1} u_{kj}u_{kj}^\ast)(f_1 f_1^* + \dots + f_kf_k^*)\exp(-t \sqrt{-1} u_{kj}u_{kj}^\ast) + f_{k+1}f_{k+1}^* + \dots + f_N f_N^* \\
		& = (\Id_d + (e^{t\sqrt{-1}} -1)u_{kj}u_{kj}^*)S_k(\Id_d + (e^{-t\sqrt{-1}} -1)u_{kj}u_{kj}^*) + f_{k+1}f_{k+1}^* + \dots + f_N f_N^* \\
		& = S + (e^{t\sqrt{-1}} -1)u_{kj}u_{kj}^*S_k + S_k(e^{-t\sqrt{-1}} -1)u_{kj}u_{kj}^* + (2-e^{t\sqrt{-1}}-e^{-t\sqrt{-1}})u_{kj}u_{kj}^*S_ku_{kj}u_{kj}^*,
	\end{align*}
	using \Cref{lem:exp projection}. 
	
	For $i_1 \neq i_2$ the vectors $u_{ki_1}$ and $u_{ki_2}$ are Hermitian orthogonal and hence the product of projections $u_{ki_1}u_{ki_1}^* u_{ki_2}u_{ki_2}^*=0$. Combining this with the spectral decomposition
	\[
		S_k = \mu_{k1}u_{k1}u_{k1}^* + \dots + \mu_{kd}u_{kd}u_{kd}^*,
	\]
	and again using the fact that projections are idempotent, we see that
	\[
		S_t - S = \left[(e^{t\sqrt{-1}} -1) +(e^{-t\sqrt{-1}} -1) + (2-e^{t\sqrt{-1}}-e^{-t\sqrt{-1}})\right]\mu_{kj}u_{kj}u_{kj}^* = 0,
	\]
	so the frame operator is invariant under the circle action.
\end{proof}

Combining \Cref{prop:action torus invariant,prop:action preserves norms,prop:action preserves spectrum} with \Cref{cor:action unitary invariant}, we obtain:

\begin{prop}
    The circle action $\phi_{kj}$ on $\frames$ descends to a well-defined action on $\frames_{\blam}(\br)/(\unitary \times G)$.
\end{prop}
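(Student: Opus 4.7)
The plan is to assemble this proposition directly from the four preceding results. Two things need to be verified: first, that $\phi_{kj}(t,\cdot)$ restricts to a self-map of $\frames_{\blam}(\br)$, and second, that this restricted action descends to the quotient by $\unitary \times G$.

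For the first, I would invoke \Cref{prop:action preserves norms}, which gives $\|\phi_{kj}(t,f_i)\| = \|f_i\|$ for each $i$, together with \Cref{prop:action preserves spectrum}, which gives $\phi_{kj}(t,F)\phi_{kj}(t,F)^\ast = FF^\ast$. Since $F \in \frames_{\blam}(\br)$ means exactly that the frame vector norms are $\br$ and the frame operator has spectrum $\blam$, both conditions are preserved by $\phi_{kj}(t,\cdot)$. (Implicit here is that the action is defined on the dense open subset where the eigensteps $\mu_{kj}$ are distinct, as discussed in \Cref{rmk:circle_action_well_defined}; preserving the frame operator ensures this subset is itself $\phi_{kj}$-invariant.)

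For the second, I would argue that $\phi_{kj}$ descends because it commutes with the $\unitary \times G$ action on $\frames$. The $\unitary$-equivariance is \Cref{cor:action unitary invariant}, and commutativity with the $\bt$-action (and hence with the quotient group $G = \bt/\zu$) is \Cref{prop:action torus invariant}. Combining these, if $F_1 = A F_2 D^\ast$ for some $A \in \unitary$ and $D \in \bt$, then
\[
\phi_{kj}(t, F_1) = \phi_{kj}(t, A F_2 D^\ast) = A \phi_{kj}(t, F_2) D^\ast,
\]
so the $\unitary \times G$-orbit is sent to itself. Thus setting $\phi_{kj}(t, [F]) := [\phi_{kj}(t,F)]$ gives a well-defined map on $\frames_{\blam}(\br)/(\unitary \times G)$, and the group action axioms on the quotient follow from those on $\frames$.

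There is no serious obstacle — the work was already done in the preceding lemmas and propositions; the only mild subtlety is being careful that the entire argument is taking place on the dense open subset of frames with isolated eigensteps, where the eigenvectors $u_{kj}$ used to define $\phi_{kj}$ are unambiguous (up to a phase, which is absorbed into the rank-one projector $u_{kj}u_{kj}^\ast$). Once this is noted, the proposition is essentially a bookkeeping corollary of the four results just established.
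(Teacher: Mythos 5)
Your proposal matches the paper's approach exactly: the paper gives no separate argument and simply combines \Cref{prop:action torus invariant,prop:action preserves norms,prop:action preserves spectrum} and \Cref{cor:action unitary invariant}, which is precisely the bookkeeping you carry out. One small imprecision worth flagging in your parenthetical: preservation of the full frame operator $S = S_N$ does not by itself guarantee that the isolated-eigenstep condition at level $k$ is preserved; what actually holds (and is visible in the computation inside the proof of \Cref{prop:action preserves spectrum}) is that $\phi_{kj}$ fixes the $k$th partial frame operator $S_k$ outright, since $u_{kj}$ is an eigenvector of $S_k$ and conjugation by $\exp(t\sqrt{-1}\,u_{kj}u_{kj}^\ast)$ therefore leaves $S_k$ unchanged, so all level-$k$ eigensteps are preserved.
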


\subsubsection{The Momentum Map of the Circle Action}

For each $k = 1,\ldots,N$ and $j = 1,\ldots,\min(k,d)$, define a map
\begin{align*}
\Phi_{kj}:\frames &\to \R \\
F &\mapsto \mu_{kj},
\end{align*}
where $\mu_{kj} = \mu_{kj}(F)$ is (as above) the $j$th eigenvalue (in descending order) of the $k$th partial frame operator of $F$. Recall that we identify $\mathfrak{u}(1) \approx \R$ via \eqref{eqn:identify_circle_lie_algebra}. We likewise identify $\mathfrak{u}(1)^\ast \approx \R$, where $\R^\ast \approx \R$ is identified via the pairing
\begin{equation}\label{eqn:pairing_circle}
\langle s,t \rangle  = \frac{s \cdot t}{2}
\end{equation}
on $\R \times \R$. This allows us to state the following result.

\begin{prop}\label{prop:circle_action_momentum}
    The map $\Phi_{kj}:\frames \to \R$ is a momentum map for the circle action $\phi_{kj}$ on the dense open subset of matrices whose $k$th partial frame operators have isolated $j$th eigenvalues.
\end{prop}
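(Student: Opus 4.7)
The plan is to verify the two defining properties of a momentum map for the circle action $\phi_{kj}$: equivariance with respect to the coadjoint action, and the differential identity $D\Phi_{kj}(F)(X)(\xi) = \omega_F(Y_\xi|_F, X)$ for every tangent vector $X \in T_F\,\frames$ and every $\xi \in \mathfrak{u}(1)$. Throughout I would work on the open subset where the $j$th eigenvalue of the partial frame operator $S_k(F) := \sum_{i=1}^k f_i f_i^\ast$ is simple; on this subset both $\mu_{kj}$ and the corresponding unit eigenvector $u_{kj}$ vary smoothly in $F$ by standard perturbation theory for isolated Hermitian eigenvalues, which in particular ensures smoothness of $\Phi_{kj}$.

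Equivariance is immediate, because $\torus$ is abelian and so its coadjoint action on $\mathfrak{u}(1)^\ast$ is trivial; it therefore suffices to show that $\Phi_{kj}$ is constant along each $\phi_{kj}$-orbit. Writing $U_t := \exp(t\sqrt{-1}\,u_{kj}u_{kj}^\ast)$, the $k$th partial frame operator of $\phi_{kj}(t,F)$ is
\[
\sum_{i=1}^k (U_t f_i)(U_t f_i)^\ast = U_t\, S_k(F)\, U_t^\ast,
\]
which is unitarily conjugate to $S_k(F)$ and hence has the same spectrum; in particular $\mu_{kj}$ is preserved.

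For the differential identity I would first compute the infinitesimal vector field. Under the identifications $[0,2\pi)\approx \torus$ via $t\mapsto e^{t\sqrt{-1}}$ and \eqref{eqn:identify_circle_lie_algebra}, taking $\xi = \sqrt{-1}\in \mathfrak{u}(1)$ and differentiating $\phi_{kj}(\epsilon,F)$ at $\epsilon=0$ yields
\[
Y_\xi|_F = \begin{bmatrix}\sqrt{-1}\,u_{kj}u_{kj}^\ast f_1 \mid \cdots \mid \sqrt{-1}\,u_{kj}u_{kj}^\ast f_k \mid 0 \mid \cdots \mid 0\end{bmatrix}.
\]
Writing $X = \begin{bmatrix}x_1\mid\cdots\mid x_N\end{bmatrix}$, inserting into $\omega_F(Z,W) = -\Im\tr(W^\ast Z)$, and using cyclic invariance of the trace gives a clean expression for $\omega_F(Y_\xi|_F,X)$ as (a constant times) the real part of $\sum_{i=1}^k \langle f_i, u_{kj}\rangle\,\overline{\langle x_i, u_{kj}\rangle}$.

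Next I would compute $D\Phi_{kj}(F)(X)$ via the standard first-order perturbation formula for a simple Hermitian eigenvalue: since $u_{kj}$ is a unit eigenvector for the isolated eigenvalue $\mu_{kj}$ of $S_k(F)$,
\[
D\Phi_{kj}(F)(X) \;=\; u_{kj}^\ast\!\left(\left.\tfrac{d}{dt}\right|_{t=0}\! S_k(F+tX)\right)\!u_{kj} \;=\; \sum_{i=1}^k u_{kj}^\ast\bigl(x_i f_i^\ast + f_i x_i^\ast\bigr)u_{kj},
\]
which simplifies to $2\Re\sum_{i=1}^{k}\langle f_i, u_{kj}\rangle\,\overline{\langle x_i, u_{kj}\rangle}$. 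Comparing the two expressions under the pairing \eqref{eqn:pairing_circle} then yields the momentum map identity, completing the proof. The only real subtlety is the bookkeeping of the signs and factors of $\tfrac{1}{2}$ that come from the identifications $\R\approx\mathfrak{u}(1)$, $s\mapsto\sqrt{-1}s$, and $\mathfrak{u}(1)^\ast\approx\R$; this is the same bookkeeping that produced the minus sign in the $\unitary$-momentum map $F\mapsto -FF^\ast$ of \Cref{prop:fixed_frame_spectrum_symplectic}.
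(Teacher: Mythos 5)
Your proof is correct and follows essentially the same route as the paper's: compute the infinitesimal generator of $\phi_{kj}$, compute $D\Phi_{kj}$ via first-order perturbation theory for a simple Hermitian eigenvalue (the paper isolates this as \Cref{lem:jth_eigenvalue_derivative} and \Cref{lem:infinitesimal_vector_field}), and then match the two sides under the identifications \eqref{eqn:identify_circle_lie_algebra} and \eqref{eqn:pairing_circle}. The one thing you add that the paper leaves implicit is the explicit verification of equivariance --- the paper's definition of momentum map requires it, but the proof of \Cref{prop:circle_action_momentum} only checks the differential identity; since $\torus$ is abelian this reduces to $\phi_{kj}$-invariance of $\mu_{kj}$, which your unitary-conjugation argument for $S_k$ establishes cleanly. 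You wave at "the bookkeeping of signs and factors of $\tfrac{1}{2}$" rather than carrying it to completion; for a finished write-up you would want to chase those through, as it is precisely the choices in \eqref{eqn:identify_circle_lie_algebra} and \eqref{eqn:pairing_circle} that make the two sides agree exactly rather than up to a nonzero constant.
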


To prove the proposition, we introduce some notation and technical lemmas. Let $F = \begin{bmatrix}f_1\mid \cdots \mid f_N\end{bmatrix} \in \frames$ and let $S_k = S_k(F) = f_1 f_1^\ast + \cdots + f_k f_k^\ast$ be its partial frame operator. Then $\mu_{kj}(F)$ is the $j$th eigenvalue (in decreasing order) of $S_k$.

We first consider the map $\bar{\mu}_j$ which takes a Hermitian $k\times k$ matrix to its $j$th eigenvalue. In what follows, let $\langle \cdot,\cdot\rangle$ denote the Frobenius inner product on $\frames$. 

\begin{lem}\label{lem:jth_eigenvalue_derivative}
    Suppose that $S$ is a $k\times k$ Hermitian matrix with isolated $j$th eigenvalue $\bar{\mu}_j(S)$. Then $\bar{\mu}_j$ is a smooth map in a neighborhood of $S$. Moreover, its gradient at $S$ is given by 
    \[
    \nabla \bar{\mu}_j (S) = \bar{u}_j \bar{u}_j^\ast,
    \]
   where $\bar{u}_j$ is the unit eigenvector associated to $\bar{\mu}_j(S)$.
\end{lem}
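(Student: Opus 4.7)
The plan is to establish smoothness first and then compute the gradient by differentiating the eigenvalue equation.

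For smoothness, I would appeal to the characteristic polynomial $p(\lambda, S) := \det(\lambda \Id_k - S)$, which is jointly smooth in $(\lambda, S)$. The hypothesis that $\bar{\mu}_j(S)$ is isolated means that $\bar{\mu}_j(S)$ is a simple root of $p(\cdot, S)$, so $\partial_\lambda p(\bar{\mu}_j(S), S) \neq 0$. The implicit function theorem then produces a smooth function $\tilde{\mu}$ defined on a neighborhood of $S$ with $\tilde{\mu}(S) = \bar{\mu}_j(S)$ and $p(\tilde{\mu}(S'), S') = 0$; by continuity of eigenvalues this function must agree with $\bar{\mu}_j$ in a (possibly smaller) neighborhood of $S$. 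Standard perturbation theory also gives that one can choose a smooth unit eigenvector $\bar{u}_j(S')$ associated with $\tilde{\mu}(S')$ in this neighborhood.

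To compute the gradient, let $H$ be an arbitrary Hermitian perturbation direction and consider the path $S(t) := S + tH$. Choose smooth families $\bar{\mu}_j(t) := \bar{\mu}_j(S(t))$ and $\bar{u}_j(t)$ of eigenvalues and unit eigenvectors as above, so that $S(t) \bar{u}_j(t) = \bar{\mu}_j(t) \bar{u}_j(t)$. Differentiating at $t=0$ and writing $\dot{} = \tfrac{d}{dt}|_{t=0}$ gives
\[
H \bar{u}_j + S \dot{\bar{u}}_j = \dot{\bar{\mu}}_j \bar{u}_j + \bar{\mu}_j \dot{\bar{u}}_j.
\]
Taking the Hermitian inner product with $\bar{u}_j$, using $\|\bar{u}_j\|=1$ and $\langle S \dot{\bar{u}}_j, \bar{u}_j\rangle = \langle \dot{\bar{u}}_j, S \bar{u}_j \rangle = \bar{\mu}_j \langle \dot{\bar{u}}_j, \bar{u}_j \rangle$ since $S$ is Hermitian, the cross terms cancel and we obtain $\dot{\bar{\mu}}_j = \bar{u}_j^\ast H \bar{u}_j$.

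Finally, the right-hand side can be rewritten via cyclic invariance of trace as
\[
\bar{u}_j^\ast H \bar{u}_j = \operatorname{tr}(\bar{u}_j^\ast H \bar{u}_j) = \operatorname{tr}(H \bar{u}_j \bar{u}_j^\ast) = \langle H, \bar{u}_j \bar{u}_j^\ast \rangle,
\]
where the last equality uses that $\bar{u}_j \bar{u}_j^\ast$ is Hermitian, so that $\operatorname{tr}(H (\bar{u}_j \bar{u}_j^\ast)^\ast) = \operatorname{tr}(H \bar{u}_j \bar{u}_j^\ast)$, which is real. Since $H$ was an arbitrary direction in the space of Hermitian matrices, this identifies $\nabla \bar{\mu}_j(S) = \bar{u}_j \bar{u}_j^\ast$.

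The only potentially subtle step is the existence of a smoothly varying eigenvector $\bar{u}_j(t)$, which relies on the simplicity of the eigenvalue, but this is exactly the isolated hypothesis in the statement and is standard; there is no real obstacle. Everything else is routine differentiation and trace manipulation.
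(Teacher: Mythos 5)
Your proof is correct and takes essentially the same approach as the paper: first-order perturbation theory for a simple eigenvalue, differentiating the eigenvalue equation and projecting onto $\bar{u}_j$, whereas the paper differentiates the equivalent Rayleigh-quotient identity $\mu(\epsilon) = \langle (S + \epsilon S')u(\epsilon), u(\epsilon)\rangle$. You are slightly more explicit in justifying smoothness via the implicit function theorem applied to the characteristic polynomial, which the paper simply asserts.
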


\begin{proof}
Let $S$ have isolated $j$th eigenvalue, which we denote simply as $\mu$. We denote the associated unit eigenvector as $u$. Let $S'$ be a variation of $S$. Denote the $j$th eigenvalue and unit eigenvector of $S + \epsilon S'$ as $\mu(\epsilon)$ and $u(\epsilon)$, respectively; we denote the $\epsilon$-derivatives of these functions as $\dot{\mu}(\epsilon)$ and $\dot{u}(\epsilon)$. Then
\begin{align}
    \left.\frac{d}{d\epsilon}\right|_{\epsilon} \mu(\epsilon) &= \left.\frac{d}{d\epsilon}\right|_{\epsilon} \langle (S + \epsilon S')u(\epsilon), u(\epsilon)\rangle = \langle S' u, u \rangle + \langle S \dot{u}(0), u \rangle + \langle Su, \dot{u}(0) \rangle \nonumber \\
    &= \langle S' u, u \rangle + \langle \dot{u}(0), Su \rangle + \langle Su, \dot{u}(0) \rangle \label{eqn:derivative_proof_1} \\
    &= \langle S' u, u \rangle + \mu\left(\langle \dot{u}(0), u \rangle + \langle u, \dot{u}(0)\right) \rangle \label{eqn:derivative_proof_2} \\
    &= \langle S'u,u \rangle = \langle S', uu^\ast \rangle, \nonumber
\end{align}
where \eqref{eqn:derivative_proof_1} follows because $S$ is Hermitian and \eqref{eqn:derivative_proof_2} follows by the condition that $\|u(\epsilon)\|$ is constant in $\epsilon$.
\end{proof}

\begin{lem}\label{lem:infinitesimal_vector_field}
    Let $F = \begin{bmatrix}f_1\mid \cdots \mid f_N\end{bmatrix} \in \frames$ and $s \in \R \approx \mathfrak{u}(1)$ (using the identification \eqref{eqn:identify_circle_lie_algebra}). The infinitesimal vector field associated to $s$ which is induced by the action $\phi_{kj}$ is given at $F$ by
    \[
    F^s := \sqrt{-1} s \cdot u_{kj}u_{kj}^\ast \begin{bmatrix} f_1 \mid \cdots \mid f_k \mid 0 \mid \cdots \mid 0 \end{bmatrix}.
    \]
\end{lem}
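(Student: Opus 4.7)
The plan is to compute the infinitesimal vector field directly from its definition. Recall that for a Lie group action $G\times M \to M$, the infinitesimal vector field at $p\in M$ associated to $\xi \in \mathfrak{g}$ is $\frac{d}{d\epsilon}\big|_{\epsilon=0}\exp(\epsilon\xi)\cdot p$. Under the identification $\R \approx \mathfrak{u}(1)$ given by $s\mapsto \sqrt{-1}s$, the Lie group exponential sends $s$ to $e^{\sqrt{-1}s}\in\torus$, and under the identification $[0,2\pi)\approx\torus$ by $t\mapsto e^{\sqrt{-1}t}$ used to parametrize $\phi_{kj}$, the element $\exp(\epsilon\sqrt{-1}s)\in\torus$ corresponds to the action parameter $t=\epsilon s$. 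Therefore
\[
F^s \;=\; \left.\frac{d}{d\epsilon}\right|_{\epsilon=0}\phi_{kj}(\epsilon s,F).
\]

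Next I would compute this derivative column by column. For indices $i>k$, the $i$th column of $\phi_{kj}(\epsilon s, F)$ is the constant vector $f_i$, so its derivative vanishes. For $1\leq i\leq k$, the $i$th column is $\exp(\epsilon s\sqrt{-1}\,u_{kj}u_{kj}^\ast)f_i$. The key point that makes this step clean is that the projection $u_{kj}u_{kj}^\ast$ does \emph{not} vary with $\epsilon$: by the computation in the proof of \Cref{prop:action preserves spectrum}, $u_{kj}u_{kj}^\ast$ commutes with $S_k$ and hence with $\exp(t\sqrt{-1}u_{kj}u_{kj}^\ast)$, so $S_k(\phi_{kj}(t,F))=S_k(F)$ for all $t$; in particular the ordered unit eigenvectors used to define $\phi_{kj}$ at $\phi_{kj}(\epsilon s,F)$ agree with those at $F$. (This is exactly the same observation needed to ensure $\phi_{kj}$ is truly a $\torus$-action in the first place.) Differentiating the matrix exponential at $\epsilon=0$ then yields
\[
\left.\frac{d}{d\epsilon}\right|_{\epsilon=0}\exp(\epsilon s\sqrt{-1}\,u_{kj}u_{kj}^\ast)f_i \;=\; \sqrt{-1}\,s\cdot u_{kj}u_{kj}^\ast f_i.
\]

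Assembling the columns gives
\[
F^s \;=\; \begin{bmatrix}\sqrt{-1}\,s\,u_{kj}u_{kj}^\ast f_1\mid\cdots\mid\sqrt{-1}\,s\,u_{kj}u_{kj}^\ast f_k\mid 0\mid\cdots\mid 0\end{bmatrix} \;=\; \sqrt{-1}\,s\cdot u_{kj}u_{kj}^\ast\begin{bmatrix}f_1\mid\cdots\mid f_k\mid 0\mid\cdots\mid 0\end{bmatrix},
\]
which is the claimed formula. There is no serious obstacle here; the only subtlety worth flagging is the bookkeeping around the identification $\R\approx\mathfrak{u}(1)$ via $s\mapsto\sqrt{-1}s$, which fixes the scalar in front and will matter when this expression is paired against $\Phi_{kj}$ in the proof of \Cref{prop:circle_action_momentum} (together with the pairing \eqref{eqn:pairing_circle}).
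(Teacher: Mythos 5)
Your proof is correct and follows essentially the same route as the paper: identify the Lie algebra element with the corresponding flow parameter, then differentiate the defining formula of $\phi_{kj}$ column by column. The extra remark about $u_{kj}u_{kj}^\ast$ being constant along the curve $\epsilon \mapsto \phi_{kj}(\epsilon s, F)$ is harmless but not actually needed here---the formula for $\phi_{kj}(\cdot, F)$ uses the eigenvector of $S_k(F)$ for the fixed base point $F$ throughout, so there is no $\epsilon$-dependence to worry about in the first place; the preservation of $S_k$ is relevant to $\phi_{kj}$ being a genuine circle action, as you note, but not to this differentiation.
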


\begin{proof}
The infinitesimal vector field is given at the frame level by
\begin{align*}
\left. \frac{d}{d\epsilon} \right|_{\epsilon = 0} \phi_{kj}(\epsilon s, f_i) 
&= 
\left. \frac{d}{d\epsilon} \right|_{\epsilon = 0} \left\{\begin{array}{cc}
\exp(\epsilon s \sqrt{-1} u_{kj}u_{kj}^\ast) f_i & \mbox{if $1 \leq i \leq k$} \\
f_i & \mbox{if $k+1 \leq i \leq N$,}
\end{array}\right.\\
&= 
\left\{\begin{array}{cc}
\sqrt{-1}s\cdot u_{kj} u_{kj}^\ast f_i & \mbox{if $1 \leq i \leq k$} \\
0 & \mbox{if $k+1 \leq i \leq N$.}
\end{array}\right.
\end{align*}
The result follows.
\end{proof}

We are now prepared to prove the proposition.

\begin{proof}[{Proof of \Cref{prop:circle_action_momentum}}]
We need to show that, for $F = \begin{bmatrix}f_1\mid \cdots \mid f_N\end{bmatrix} \in \frames$, $X = \begin{bmatrix}x_1 \mid \cdots \mid x_N\end{bmatrix} \in T_F \frames \approx \C^{d \times N}$, and $s \in \R \approx \mathfrak{u}(1)$ (via \eqref{eqn:identify_circle_lie_algebra}), the following equation holds:
\begin{equation}\label{eqn:momentum_equation_circle}
D\Phi_{kj}(F)(X)(s) = \omega_F(F^s,X).
\end{equation}
This is accomplished by direct computation.

The right hand side of \eqref{eqn:momentum_equation_circle} simplifies to
\begin{align}
    \omega_F(F^s,X) &= \mathrm{Im}\,\mathrm{tr}(F^s X^\ast) \nonumber \\
    &= \mathrm{Im} \, \mathrm{tr} \left(\sqrt{-1}s \cdot u_{kj} u_{kj}^\ast \cdot \begin{bmatrix}f_1\mid \cdots \mid f_k \mid 0 \mid \cdots \mid 0 \,\end{bmatrix} \cdot \begin{bmatrix}\, x_1\mid \cdots \mid x_N\end{bmatrix}^\ast \right) \label{eqn:momentum_map_RHS_1} \\
    &= \mathrm{Im} \, \sqrt{-1}s \cdot  \mathrm{tr}\left(u_{kj}u_{kj}^\ast (f_1x_1^\ast + \cdots + f_k x_k^\ast ) \right) \nonumber \\
    &= s \cdot \mathrm{Re} \, \mathrm{tr}\left(u_{kj}u_{kj}^\ast (f_1x_1^\ast + \cdots + f_k x_k^\ast ) \right) \label{eqn:momentum_map_RHS_2},
\end{align}
where \eqref{eqn:momentum_map_RHS_1} uses the expression for $F^s$ from \Cref{lem:infinitesimal_vector_field}. The left hand side of \eqref{eqn:momentum_equation_circle} becomes
\begin{align*}
    D\Phi_{kj}(F)(X)(s) &= \frac{1}{2} s \cdot D(\bar{\mu}_j \circ S_k)(F)(X) \\
    &= \frac{1}{2} s \cdot D\bar{\mu}_j(S_k(F)) \cdot DS_k(F)(X),
\end{align*}
where the first line follows by our choice of identification $\R \approx \mathfrak{u}(1)^\ast$ \eqref{eqn:pairing_circle} and where $S_k(F)$ is the $k$th partial frame operator of $F$ and $\bar{\mu}_j$ is the $j$th eigenvalue function on $d \times d$ Hermitian matrices. A straightforward computation shows that
\[
DS_k(F)(X) = (f_1 x_1^\ast + x_1 f_1^\ast) + \cdots + (f_k x_k^\ast + x_k f_k^\ast).
\]
Putting this together with \Cref{lem:jth_eigenvalue_derivative}, our simplification continues as
\begin{align}
    \frac{1}{2} s \cdot D\bar{\mu}_j(S_k(F)) \cdot DS_k(F)(X) &= \frac{1}{2} s \cdot \mathrm{Re} \, \langle u_{kj} u_{kj}^\ast, (f_1 x_1^\ast + x_1 f_1^\ast) + \cdots + (f_k x_k^\ast + x_k f_k^\ast) \rangle \nonumber \\
    &= \frac{1}{2} s \cdot \mathrm{Re} \, \mathrm{tr}\left(u_{kj} u_{kj}^\ast \left( (f_1 x_1^\ast + x_1 f_1^\ast) + \cdots + (f_k x_k^\ast + x_k f_k^\ast) \right)\right) \nonumber \\
    &= \frac{1}{2} s \cdot \mathrm{Re} \, \mathrm{tr}\left(u_{kj} u_{kj}^\ast \left(f_1 x_1^\ast + f_2 x_2^\ast + \cdots + f_k x_k^\ast \right) \right)  \nonumber \\
    &\hspace{2in} + \mathrm{Re}\,\mathrm{tr}\left(u_{kj} u_{kj}^\ast \left(x_1 f_1^\ast + x_2 f_2^\ast + \cdots + x_k f_k^\ast \right) \right) \label{eqn:momentum_LHS_1} \\
    &= s \cdot \mathrm{Re} \, \mathrm{tr}\left(u_{kj} u_{kj}^\ast \left(f_1 x_1^\ast + \cdots + f_k x_k^\ast \right) \right), \label{eqn:momentum_LHS_2}
\end{align}
where the last line follows by the observation that the trace terms in \eqref{eqn:momentum_LHS_1} are conjugates. Since \eqref{eqn:momentum_LHS_2} and \eqref{eqn:momentum_map_RHS_2} agree, \eqref{eqn:momentum_equation_circle} has been established.
\end{proof}

\subsubsection{Dimension Counting}\label{sec:dimension_counting}

We now pause briefly to count the possible number of independent eigensteps $\mu_{kj}$. The $\mu_{kj}$ associated to a frame define a map $\frames_{\blam}(\br) \to \R^m$ for some $m$ depending on $N$ and $d$; as has been previously observed~\cite{Cahill:2013jv,Cahill:2017gv,Haga:2016jva}, the image of this map is a convex polytope, whose dimension we now determine. To slightly simplify some calculations, we will assume in this section that $\br$ is strongly $\blam$-admissible.

Since each partial frame operator $S_k =  f_1f_1^* + \dots + f_k f_k^*$ is a rank-1 perturbation $S_k = S_{k-1} + f_k f_k^*$ of the previous partial frame operator, Weyl's perturbation inequalities (see, e.g., \cite[Chapter III]{Bhatia}) imply that the eigenvalues of $S_k$ and $S_{k-1}$ satisfy the interlacing inequalities
\[
	\dots \geq \mu_{k,j} \geq \mu_{k-1,j} \geq \mu_{k,j+1} \geq \mu_{k-1,j+1} \geq \dots
\]
for each $k=2, \dots , N$ and $j=1, \dots , \min\{d,k\}$, where we use the notation $\mu_{k,j}$ in place of $\mu_{kj}$ when we need to disambiguate the index $k$ from the index $j$.

When $N=4$ and $d=3$, this produces the array
\[
	\begin{tikzcd}[row sep=small,column sep=tiny]
	\lambda_1 && \lambda_2 && \lambda_3 \\
		& \mu_{31} && \mu_{32} && \mu_{33} \\
		&& \mu_{21} && \mu_{22} \\
		&&& \mu_{11}
		\arrow["\geq"{marking}, draw=none, from=2-2, to=3-3]
		\arrow["\geq"{marking}, draw=none, from=3-3, to=4-4]
		\arrow["\geq"{marking}, draw=none, from=4-4, to=3-5]
		\arrow["\geq"{marking}, draw=none, from=3-5, to=2-6]
		\arrow["\geq"{marking}, draw=none, from=3-3, to=3-5]
		\arrow["\geq"{marking}, draw=none, from=3-3, to=2-4]
		\arrow["\geq"{marking}, draw=none, from=2-2, to=2-4]
		\arrow["\geq"{marking}, draw=none, from=2-4, to=2-6]
		\arrow["\geq"{marking}, draw=none, from=2-4, to=3-5]
		\arrow["\geq"{marking}, draw=none, from=1-1, to=2-2]
		\arrow["\geq"{marking}, draw=none, from=2-2, to=1-3]
		\arrow["\geq"{marking}, draw=none, from=1-1, to=1-3]
		\arrow["\geq"{marking}, draw=none, from=1-3, to=1-5]
		\arrow["\geq"{marking}, draw=none, from=1-3, to=2-4]
		\arrow["\geq"{marking}, draw=none, from=2-4, to=1-5]
		\arrow["\geq"{marking}, draw=none, from=1-5, to=2-6]
	\end{tikzcd}
\]
where we recall that $S_N = S$, and hence $\mu_{N,j} = \lambda_j$ for each $j=1, \dots , d$.

In other words, entries in the array are greater than entries to the right, regardless of vertical position. With this convention in place, we can omit explicit inequalities without losing any information:
\[
	\begin{tikzcd}[cramped, sep=0]
	\lambda_1 && \lambda_2 && \lambda_3 \\
		& \mu_{31} && \mu_{32} && \mu_{33} \\
		&& \mu_{21} && \mu_{22} \\
		&&& \mu_{11}
	\end{tikzcd}
\]

In general, then, the $\mu_{k,j}$ satisfy the system of interlacing inequalities implied by the following diagram:	
\begin{equation}\label{eq:eigenstep inequalities}
	\begin{tikzcd}[cramped, sep=0]
	{\lambda_1} && {\lambda_2} && {\lambda_3} && \dots && {\lambda_d} \\
	& {\mu_{N-1,1}} && {\mu_{N-1,2}} && {\mu_{N-1,3}} && \dots && {\mu_{N-1,d}} \\
	&& \ddots && \ddots && \ddots && \ddots && \ddots \\
	&&& {\mu_{d+1,1}} && {\mu_{d+1,2}} && {\mu_{d+1,3}} && \dots && {\mu_{d+1,d}} \\
	&&&& {\mu_{d,1}} && {\mu_{d,2}} && {\mu_{d,3}} && \dots && {\mu_{d,d}} \\
	&&&&& \ddots && \ddots && \ddots && \iddots \\
	&&&&&& {\mu_{3,1}} && {\mu_{3,2}} && {\mu_{3,3}} \\
	&&&&&&& {\mu_{2,1}} && {\mu_{2,2}} \\
	&&&&&&&& {\mu_{1,1}}
	\end{tikzcd}
\end{equation}
In this diagram, all entries are nonnegative; equivalently, we think of implicit zeros to the right of every row. The pattern of inequalities described by the above diagram is often called a \emph{Gelfand--Tsetlin pattern}~\cite{gelfand_finite-dimensional_1950,DeLoera:2004eo}; it corresponds to the Gelfand--Tsetlin integrable system on the flag manifold $\mathcal{O}_{\widetilde{\blam}} \simeq \frames_{\blam}/\unitary$~\cite{Anonymous:1983bj}.

In fact, there is one further constraint: cyclic invariance of trace implies that the trace of each partial frame operator $S_k$ must equal the trace of the corresponding partial Gram matrix. That is, the sum of each row in \eqref{eq:eigenstep inequalities} must equal the corresponding partial sum of squared frame vector norms:
\begin{equation}\label{eq:row sum}
	\sum_{j=1}^{\min\{k,d\}} \mu_{kj} = \sum_{j=1}^k r_j
\end{equation}
for each $k=1, \dots , N$.

With this constraint in place, we can count the number of free parameters in~\eqref{eq:eigenstep inequalities} and, hence, the number of independent $\phi_{kj}$. Notice that there are $\frac{d(2N-d-1)}{2}$ total entries in~\eqref{eq:eigenstep inequalities}, excluding the top row (which we already know is fixed): $\frac{d(d+1)}{2}$ for the triangle in the bottom $d$ rows, and $d(N-d-1)$ for the parallelogram in the upper $N-d$ rows. Since each row sum is fixed, we lose one free parameter for each of the rows but the top one, meaning we subtract $N-1$ parameters.

If all the eigenvalues $\lambda_1, \dots , \lambda_d$ of the frame operator are distinct, we are done. However, if $\lambda_j=\lambda_{j+1}$, then this implies that $\lambda_j=\mu_{N-1,j}=\lambda_{j+1}$. More generally, if $\lambda_j$ has multiplicity $k$, meaning that $\lambda_j=\lambda_{j+1} = \dots = \lambda_{j+k-1}$, then this fixes an entire upside-down triangle in~\eqref{eq:eigenstep inequalities} with vertices at $\lambda_j$, $\lambda_{j+k-1}$, and $\mu_{N-k,j}$, comprising $\frac{k(k-1)}{2}$ of the $\mu_{i,j}$.

Therefore, if the eigenvalues $\lambda_1, \dots , \lambda_d$ have multiplicities $k_1, \dots , k_\ell$ with $k_1 + \dots + k_\ell=d$, then the total number of free parameters in~\eqref{eq:eigenstep inequalities} is
\[
	\frac{d(2N-d-1)}{2} - (N-1) - \sum_{j=1}^\ell \frac{k_j(k_j-1)}{2} = N(d-1)+1-\frac{d^2}{2}-\frac{1}{2}\sum_{j=1}^\ell k_j^2.
\]

The interlacing inequalities implicit in the diagram \eqref{eq:eigenstep inequalities} together with the row sums \eqref{eq:row sum} determine a convex polytope $\polytope_{\blam}(\br)$, which we call the \emph{eigenstep polytope} (at least when $\blam$ and $\br$ are rational, these are sometimes called \emph{(weight-restricted) Gelfand--Tsetlin polytopes} in the combinatorics literature~\cite{DeLoera:2004eo,Alexandersson:2016gb}). In the course of the discussion above, we have proved:

\begin{prop}\label{prop:polytope dimension}
	When $\br$ is strongly $\blam$-admissible, the eigenstep polytope has dimension
	\[
		d(\blam,\br):= \dim\left(\polytope_{\blam}(\br)\right) =  N(d-1)+1-\frac{d^2}{2}-\frac{1}{2}\sum_{j=1}^\ell k_j^2,
	\]
	where $k_1, \dots , k_\ell$ are the multiplicities of the spectrum $\blam = (\lambda_1, \dots , \lambda_d)$.
	
	Comparing to \Cref{cor:dimension}, 
	\[
		\dim\left(\polytope_{\blam}(\br)\right) = \frac{1}{2} \dim\left(\frames_{\blam}(\br)/(\unitary \times G)\right).
	\]
\end{prop}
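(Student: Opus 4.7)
The plan is to establish the formula for $\dim \polytope_{\blam}(\br)$ by a direct parameter count on the Gelfand--Tsetlin diagram \eqref{eq:eigenstep inequalities}, and then to derive the second identity by algebraic comparison with \Cref{cor:dimension}.

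First I would count the total number of unknowns $\mu_{kj}$ lying below the top row of \eqref{eq:eigenstep inequalities} (the top row being fixed to $\blam$). Partitioning the array into its bottom triangle (rows of lengths $1,2,\dots,d$) and its upper parallelogram ($N-d-1$ rows of length $d$) yields $\frac{d(d+1)}{2}+d(N-d-1)=\frac{d(2N-d-1)}{2}$ unknowns. Next I would subtract the affine equality constraints. The trace identities \eqref{eq:row sum} contribute $N-1$ independent linear relations, one per non-top row. Moreover, whenever the top row has a block of coinciding values $\lambda_j=\lambda_{j+1}=\cdots=\lambda_{j+k-1}$, the interlacing relations propagate this equality downward, forcing an upside-down triangle of $\frac{k(k-1)}{2}$ entries to equal the common value; summing over the multiplicities $k_1,\dots,k_\ell$ gives $\sum_j \frac{k_j(k_j-1)}{2}$ further equalities. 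Combining these and simplifying via $\sum_j k_j=d$ yields the stated expression for $d(\blam,\br)$ as the dimension of the affine span of $\polytope_{\blam}(\br)$.

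To finish the first claim I must show that $\polytope_{\blam}(\br)$ is full-dimensional inside this affine span; this is the step I expect to be the main obstacle, since the counting alone only gives an upper bound. The task is to exhibit an interior point---i.e.\ a pattern realizing strict inequality in every interlacing relation not already forced to equality. I would construct such a pattern row-by-row from the top, at each step choosing the next $\mu_{k,j}$ strictly in the interior of the nonempty interval cut out by the already-chosen neighboring entries and the current row-sum constraint. The key observation is that strong $\blam$-admissibility is precisely the hypothesis that guarantees each such interval has nonempty interior, so no hidden degeneracies arise; this is the same mechanism underlying the eigenstep construction of \cite{Cahill:2013jv}, where strict majorization plays the same role.

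Finally, the second identity is a direct bookkeeping check: from \Cref{cor:dimension},
\[
\dim\bigl(\frames_{\blam}(\br)/(\unitary\times G)\bigr)=2N(d-1)+2-d^2-\sum_{j=1}^\ell k_j^2 = 2\,d(\blam,\br).
\]
This half-dimension coincidence is the key statistic motivating the next subsection: $\polytope_{\blam}(\br)$ has exactly the correct dimension to serve as the moment polytope of a half-dimensional torus action on the top stratum of $\frames_{\blam}(\br)/(\unitary\times G)$, setting up the toric symplectic picture exploited to prove the main theorem.
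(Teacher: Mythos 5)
Your proposal takes essentially the same route as the paper: the dimension formula is obtained by the same degree-of-freedom count on the Gelfand--Tsetlin diagram---$\frac{d(2N-d-1)}{2}$ entries below the fixed top row, minus the $N-1$ row-sum constraints from \eqref{eq:row sum}, minus the $\sum_j \tfrac{k_j(k_j-1)}{2}$ entries forced to equality by repeated eigenvalues in $\blam$---and the second identity is the same direct comparison with \Cref{cor:dimension}. The one place you go beyond what is explicitly written in the paper is your observation that the counting alone only bounds the dimension from above, so one must separately exhibit a relative interior point, and that strong $\blam$-admissibility is the hypothesis doing the work there. The paper treats the parameter count itself as the proof, leaving the full-dimensionality step implicit (it is supplied by the construction of Cahill et al.\ and the results of Haga--Pegel and Flaschka--Millson that the paper cites). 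Your row-by-row sketch for constructing an interior pattern is reasonable, though you should be careful that the ``nonempty interval'' at each step is guaranteed not just by the interlacing neighbors above but by the interaction of those bounds with the row-sum constraint---this is precisely where strong admissibility enters, and it is the one point in your outline that deserves a more careful argument if written out in full.
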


In the case of unit-norm tight frames, $r_i=1$ for all $i$ and $\lambda_j=\frac{N}{d}$ for all $j$, so $k_1=d$ and the above dimension simplifies to
\[
	\dim\left(\polytope_{\left(\frac{N}{d}, \dots , \frac{N}{d}\right)}(1,\dots , 1)\right) = (d-1)(N-d-1),
\]
so we see that a result of Flaschka--Millson~\cite[Proposition~6.2.2]{Flaschka:2005dq} and Haga--Pegel~\cite[Theorem~3.2]{Haga:2016jva} is a special case of \Cref{prop:polytope dimension}.

\subsubsection{Hamiltonian Torus Action}

We now return to the task of showing that there is a Hamiltonian torus action on our frame spaces; the dimension $d(\blam,\br)$ just computed will be the dimension of the torus. 

Of course, the torus action will simply be the product of the individual circle actions, which we will show commute using \Cref{prop:action unitary invariant for vectors}.

\begin{prop}
    The circle actions $\phi_{kj}$ and $\phi_{m \ell}$ commute for all $k,j,m,\ell$. 
\end{prop}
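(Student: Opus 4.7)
The plan is to reduce to two cases: $k=m$ and $k<m$ (which covers $k>m$ by symmetry). The case $j=\ell$ with $k=m$ is trivial since the two actions coincide. In the remaining case $k=m$ with $j\neq\ell$, the vectors $u_{kj}$ and $u_{k\ell}$ are eigenvectors of the same Hermitian matrix $S_k$ corresponding to distinct eigenvalues (by the standing isolation assumption), hence orthogonal by the spectral theorem. Thus the rank-one projections $u_{kj}u_{kj}^{\ast}$ and $u_{k\ell}u_{k\ell}^{\ast}$ commute, and so do their exponentials. Since both $\phi_{kj}$ and $\phi_{k\ell}$ modify only the first $k$ columns by left multiplication by these commuting unitaries, the actions commute.

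For the substantive case $k<m$, the key auxiliary claim is that $\phi_{kj}$ preserves the partial frame operator $S_{m'}$ for every $m'\geq k$. This is a mild adaptation of the proof of \Cref{prop:action preserves spectrum}: the action conjugates $S_k$ by $\exp(t\sqrt{-1}\,u_{kj}u_{kj}^{\ast})$ and leaves $f_{k+1}f_{k+1}^{\ast}+\dots+f_{m'}f_{m'}^{\ast}$ untouched. Because $u_{kj}$ is an eigenvector of $S_k$, the computation carried out with \Cref{lem:exp projection} shows the conjugation of $S_k$ is trivial, so $S_{m'}$ is unchanged. In particular, $\phi_{kj}$ does not alter $u_{m\ell}$, so the action of $\phi_{m\ell}$ applied after $\phi_{kj}$ uses the same eigenvector it would have used from the start.

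Dually, when $\phi_{m\ell}$ acts first, it conjugates $S_k$ by the unitary $U:=\exp(s\sqrt{-1}\,u_{m\ell}u_{m\ell}^{\ast})$, so the eigenvectors of the new $S_k$ are $Uu_{k1},\ldots,Uu_{kd}$. Consequently the projection driving the subsequent $\phi_{kj}$ becomes $(Uu_{kj})(Uu_{kj})^{\ast}=Uu_{kj}u_{kj}^{\ast}U^{\ast}$, and the exponential becomes $U\exp(t\sqrt{-1}\,u_{kj}u_{kj}^{\ast})U^{\ast}$. A column-by-column comparison then finishes the argument: for $i\leq k$ both compositions yield $U\exp(t\sqrt{-1}\,u_{kj}u_{kj}^{\ast})f_i$ (the $U^{\ast}U$ collapses in one ordering and is absent in the other); for $k<i\leq m$ both yield $Uf_i$; and for $i>m$ both leave $f_i$ fixed.

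The main obstacle is the apparent asymmetry in the $k<m$ case: the data $u_{kj}$ used to define $\phi_{kj}$ depends on whether $\phi_{m\ell}$ has already acted, whereas $u_{m\ell}$ is insensitive to the action of $\phi_{kj}$. The preservation claim $\tilde{S}_{m'}=S_{m'}$ for $m'\geq k$ is precisely what breaks this apparent asymmetry, and the conjugation of eigenvectors by $U$ is exactly what is needed to intertwine the two exponentials into agreement.
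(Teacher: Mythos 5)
Your proof is correct and is essentially the paper's argument unpacked. The paper's own proof reduces to the frame-vector level, assumes WLOG $k\leq m$, handles $i>k$ trivially, and for $i\leq k$ invokes the $\operatorname{U}(d)$-equivariance of $\phi_{kj}$ (\Cref{prop:action unitary invariant for vectors}) to move the unitary $\exp(s\sqrt{-1}u_{m\ell}u_{m\ell}^\ast)$ out front; your column-by-column calculation is precisely what that lemma packages. Two small remarks. First, your separate treatment of $k=m$ is redundant: the general $k\leq m$ argument covers it, since $\phi_{kj}$ fixes $S_k$ (by the same conjugation-is-trivial observation) and $\phi_{k\ell}$ conjugates $S_k$, so $u_{kj}\mapsto Uu_{kj}$ exactly as in the $k<m$ case. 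Second, and more to your credit, your auxiliary claim that $\phi_{kj}$ preserves $S_{m'}$ for every $m'\geq k$---so that the eigenvector $u_{m\ell}$ used to form $\phi_{m\ell}$ after $\phi_{kj}$ has acted is unchanged---is implicitly relied on but not spelled out in the paper's proof. Your version makes this explicit, which is a small improvement in rigor; the rest of the argument (conjugating the eigenvectors of the partial frame operator and matching the two compositions column by column) coincides with what the cited equivariance lemma is doing under the hood.
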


\begin{proof}
Let $F = \begin{bmatrix}f_1\mid \cdots \mid f_N\end{bmatrix} \in \frames$. It suffices to prove the claim at the frame level, i.e., 
\begin{equation}\label{eqn:circles_commute}
\phi_{kj}(t,\phi_{m \ell}(s,f_i)) = \phi_{m \ell}(s,\phi_{kj}(t,f_i))
\end{equation}
for all $f_i$. Without loss of generality, assume $k \leq m$. If $k+1 \leq i \leq N$, then $\phi_{kj}$ is the trivial action, so \eqref{eqn:circles_commute} is obvious. Otherwise we have
\begin{align}
\phi_{kj}(t,\phi_{\ell m}(s,f_i)) &= \phi_{kj}(t, \exp(s \sqrt{-1} u_{m \ell} u_{m \ell}^\ast) f_i ) \nonumber \\
&= \exp(s \sqrt{-1} u_{m \ell} u_{m \ell}^\ast) \phi_{kj}(t,f_i) \label{eqn:circles_commute_1} \\
&= \phi_{m\ell} (s, \phi_{kj}(t,f_i)), \nonumber
\end{align}
where \eqref{eqn:circles_commute_1} follows by \Cref{prop:action unitary invariant for vectors}, since the exponential is unitary.
\end{proof}

\begin{cor}\label{cor:open_dense_torus}
If $\br$ is strongly $\blam$-admissible, there is a Hamiltonian action of the torus $U(1)^{d(\blam,\br)}$ on an open, dense subset of $\frames_{\blam}(\br)/(\unitary \times G)$. The associated moment polytope is the eigenstep polytope $\polytope_{\blam}(\br)$.
\end{cor}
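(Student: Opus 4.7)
The plan is to assemble the pieces established in the preceding subsections. Let $U \subset \frames_{\blam}(\br)/(\unitary\times G)$ denote the open dense subset on which every partial eigenstep $\mu_{kj}$ that is not constrained to equal a neighbor by the forced triangles coming from multiplicities in $\blam$ is in fact simple. On $U$, each $\phi_{kj}$ (for $(k,j)$ not lying in such a forced triangle) is a well-defined Hamiltonian circle action with momentum $\Phi_{kj} = \mu_{kj}$ by \Cref{prop:circle_action_momentum}, and it descends to the quotient by the sequence of $\unitary$- and $\bt$-invariance propositions. These circle actions pairwise commute by the preceding proposition, so together they generate an action of some torus on $U$.

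To identify the dimension of this torus as $d(\blam,\br)$, I would exhibit a maximal collection of positions $(k,j)$ in the diagram~\eqref{eq:eigenstep inequalities} whose eigensteps are neither fixed by the forced triangles nor determined by the row-sum relation~\eqref{eq:row sum}; by the count in \Cref{sec:dimension_counting} culminating in \Cref{prop:polytope dimension}, there are exactly $d(\blam,\br)$ such positions. The corresponding $\phi_{kj}$ generate a torus of dimension at most $d(\blam,\br)$, and since the joint momentum map $[F] \mapsto (\mu_{kj}(F))$ has $d(\blam,\br)$-dimensional image, the torus has dimension exactly $d(\blam,\br)$. The remaining $\phi_{kj}$ act trivially on the quotient for two reasons, which together account for the redundancies counted above: the actions $\phi_{N,j}$ are implemented by left multiplication by a unitary matrix and so vanish after quotienting by $\unitary$, while for each $k=1,\dots,N-1$, \Cref{lem:exp projection} applied to $\sum_j u_{kj} u_{kj}^\ast = P_k$ (the orthogonal projection onto $\mathrm{span}\{f_1,\dots,f_k\}$) shows that the diagonal composition $\phi_{k,1}(t)\circ\cdots\circ\phi_{k,\min(k,d)}(t)$ simultaneously rotates $f_1,\dots,f_k$ by $e^{t\sqrt{-1}}$ and thus factors through $G$.

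Finally, the momentum map image is contained in $\polytope_{\blam}(\br)$ because eigensteps automatically satisfy the Weyl interlacing inequalities and trace identities recorded in \eqref{eq:eigenstep inequalities}--\eqref{eq:row sum}. By the Atiyah--Guillemin--Sternberg convexity theorem (\Cref{thm:convexity}), this image is a convex polytope, and its $d(\blam,\br)$-dimensionality forces it to be all of $\polytope_{\blam}(\br)$; alternatively, the surjectivity onto $\polytope_{\blam}(\br)$ follows from the existence theorem for frames with prescribed eigensteps due to Cahill et al.~\cite{Cahill:2013jv}. The main obstacle I anticipate is not any single step but the careful bookkeeping required to check that the $d(\blam,\br)$ selected circle actions generate a genuine $\torus^{d(\blam,\br)}$ (rather than a proper quotient) after the $\unitary \times G$-reduction; the dimension match $2 d(\blam,\br) = \dim\!\left(\frames_{\blam}(\br)/(\unitary\times G)\right)$ supplied by \Cref{prop:polytope dimension}, combined with the identification of the two specific families of redundancies above, should make effectiveness essentially automatic.
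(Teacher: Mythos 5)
Your proposal follows essentially the same route as the paper: restrict to an open dense subset with generic eigensteps, use the $\phi_{kj}$ to build commuting Hamiltonian circle actions, count free eigensteps to get the torus dimension, and identify the eigenstep polytope as the moment polytope. Where you go beyond the paper's brief sketch is in making the effectiveness/redundancy argument concrete: identifying that the $\phi_{N,j}$ die in the $\unitary$-quotient (since they act by a global left unitary), and that for each row $k<N$ the diagonal composition $\phi_{k,1}\circ\cdots\circ\phi_{k,\min(k,d)}$ is scalar phase rotation of $f_1,\dots,f_k$ via $\exp(t\sqrt{-1}P_k)$ and hence factors through $G$. This is a nice, correct observation that the paper leaves implicit, and the bookkeeping does balance: $d$ redundancies from $\unitary$, $N-1$ from $G$, and the $\sum_j \binom{k_j}{2}$ forced-triangle entries excluded from consideration account for the drop from $\frac{d(2N-d+1)}{2}$ circles to $d(\blam,\br)$.

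Two points deserve tightening. First, your $U$ is defined purely by eigenstep genericity, but that condition does not exclude orthodecomposable frames: e.g.\ $F=\left(\begin{smallmatrix}1&1&0&0\\0&0&1&1\end{smallmatrix}\right)$ in $\mathcal{F}_{(2,2)}^{2,4}(1,1,1,1)$ has all non-forced eigensteps simple, yet is a singular point of the frame variety and its orbit sits in a lower stratum of the quotient, where there is no smooth symplectic structure. The paper first restricts to the top stratum (via \Cref{cor:proper} and \Cref{cor:when frame spaces are manifolds}) and then intersects with the eigenstep-generic locus; you should do the same for the Hamiltonian picture to make sense pointwise. Second, the claim that ``$d(\blam,\br)$-dimensionality forces [the image] to be all of $\polytope_{\blam}(\br)$'' does not follow: a full-dimensional convex subset of a polytope need not be the whole polytope. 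Your alternative argument via the Cahill et al.\ eigenstep existence theorem is the one that actually closes this, so lean on that (and note that the moment polytope is the closure of the image, as in the discussion following \Cref{thm:convexity}).
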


\begin{proof}
    We know from \Cref{cor:proper} that $\frames_{\blam}(\br)/(\mathrm{U}(d) \times G)$ contains an open, dense symplectic manifold which, by \Cref{cor:when frame spaces are manifolds}, consists of orbits of non-orthodecomposable frames. This symplectic manifold contains an open, dense subset of frames whose partial frame operators each have as many distinct eigenvalues as possible (given the eigenstep constraints). Choosing $d(\blam,\br)$ free variables according to the dimension counting argument of \Cref{sec:dimension_counting} yields a Hamiltonian torus action on this open set.
\end{proof}

\begin{remark}
	In conjunction with \Cref{thm:convexity}, this result can be used to show that the space of frames with prescribed eigensteps is connected. While this fact essentially follows from~\cite[Theorem~7]{Cahill:2013jv}, which was a key tool in the original proof of the Frame Homotopy Conjecture~\cite{Cahill:2017gv}, the symplectic viewpoint puts this fact in a more general context.
\end{remark}

By \Cref{prop:polytope dimension}, the torus acting on the open dense subset from \Cref{cor:open_dense_torus} is half-dimensional. We have therefore proved the following theorem on the geometric structure of frame spaces.

\begin{thm}\label{thm:toric_symplectic}
The space of frames $\frames_{\blam}(\br)$ with prescribed spectrum and norms contains a dense open subset which is a $(\unitary \times G)$-bundle over a toric symplectic manifold.
\end{thm}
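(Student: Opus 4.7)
The plan is to assemble the structural results of the preceding subsections; almost all of the needed pieces are already in place.

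First I would identify the toric symplectic manifold. By \Cref{cor:open_dense_torus}, there is an open dense subset $U \subset \frames_{\blam}(\br)/(\unitary \times G)$ carrying a Hamiltonian $\torus^{d(\blam,\br)}$-action, and \Cref{prop:polytope dimension} asserts that $d(\blam,\br)$ equals exactly half the dimension of the symplectic quotient. Since $U$ lies in the top manifold stratum, its dimension agrees with the dimension of the full quotient, so the torus action is half-dimensional and $U$ is toric in the sense of \Cref{sec:toric_symplectic_manifold}.

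Next I would take $\widetilde{U} := q^{-1}(U)$, where $q \colon \frames_{\blam}(\br) \to \frames_{\blam}(\br)/(\unitary \times G)$ is the quotient map. Density of $U$ in the top stratum, combined with \Cref{cor:proper}, makes $\widetilde{U}$ an open dense subset of $\frames_{\blam}(\br)$, and by construction every point of $\widetilde{U}$ is a regular point of the momentum map $\Phi$—equivalently, by \Cref{prop:critical points} and \Cref{cor:when frame spaces are manifolds}, a non-orthodecomposable frame lying in the smooth locus of $\frames_{\blam}(\br)$.

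The final step is to verify that $q|_{\widetilde{U}} \colon \widetilde{U} \to U$ is a principal $(\unitary \times G)$-bundle. Local freeness of the $(\unitary \times G)$-action on $\widetilde{U}$ is immediate from \Cref{prop:regular points of momentum maps}. For honest freeness, the key observation is that if $(A,D) \in \unitary \times \bt$ fixes a non-orthodecomposable $F$, then $A f_i = d_i f_i$ for each column $f_i$, so every column is an eigenvector of $A$; an argument parallel to the one used in the proof of \Cref{prop:critical points} then forces all $d_i$ to coincide and $A$ to be a scalar multiple of the identity, which is precisely the pair that has been quotiented out in forming $G = \bt/\zu$. Compactness of $\unitary \times G$ makes the action proper, and the quotient manifold theorem concludes that $q|_{\widetilde{U}}$ is a principal bundle.

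The main obstacle is this last step, i.e.\ upgrading the local freeness supplied by \Cref{prop:regular points of momentum maps} to genuine freeness of the combined left-$\unitary$/right-$G$ action on $\widetilde{U}$; the other ingredients are direct consequences of results already proved.
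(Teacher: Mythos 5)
Your proposal follows the same route the paper does: Corollary~\ref{cor:open_dense_torus} supplies the open dense subset $U$ of $\frames_{\blam}(\br)/(\unitary\times G)$ with a Hamiltonian torus action, Proposition~\ref{prop:polytope dimension} verifies the torus is half-dimensional so that $U$ is toric, and $\widetilde{U}=q^{-1}(U)$ gives the claimed dense open subset of $\frames_{\blam}(\br)$. The paper itself states the theorem with essentially only the dimension-matching observation as its ``proof,'' leaving the bundle structure implicit; the one place where you go beyond the written argument is in verifying that $q|_{\widetilde{U}}$ is a principal $(\unitary\times G)$-bundle. Your upgrade from local freeness to genuine freeness is correct: if $(A,D)$ fixes $F$ then $Af_i=d_i f_i$ for each column, the eigenvalue groupings give orthogonal subspaces whose spans decompose $\C^d$, and non-orthodecomposability forces all $d_i$ to coincide; since the last entry of $D\in G$ is $1$, this common value is $1$ and hence $A=\Id_d$, $D=\Id_N$. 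This is a useful piece of detail the paper omits, but structurally you are reconstructing the same argument rather than offering an alternative one.
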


\section{Proof of Main Theorem}\label{sec:proof_of_main_theorem}

We are now prepared to prove the main theorem using the toric symplectic framework developed in the previous section. We also sketch a more algebraically-flavored proof using related tools from Geometric Invariant Theory \cite{mumford_geometric_1994}.

\subsection{A Toric Symplectic Approach}

It is clear that the three options (repeated below) are mutually exclusive and exhaust all possibilities, so it remains only to show that the conclusion in each part follows from the assumption.

\begin{enumerate}[leftmargin=*]
	\item \emph{If $\br$ is not $\blam$-admissible, then $\frames_{\blam}(\br) = \emptyset$.}
	\begin{proof}
		As pointed out in \Cref{sec:prescribed norms and spectrum}, this is a consequence of the Schur--Horn theorem~\cite{schur1923uber,horn1954doubly}; it is also a result of Casazza and Leon~\cite{Casazza:2010ti}.
	\end{proof}
	\item \emph{If $\br$ is $\blam$-admissible, but not strongly $\blam$-admissible, then $\frames_{\blam}(\br)$ is nonempty but consists entirely of frames which are not full spark.}
	\begin{proof}
		Suppose $\br$ is $\blam$-admissible, but not strongly $\blam$-admissible. Then $\frames_{\blam}(\br)$ is nonempty by Casazza and Leon's result~\cite{Casazza:2010ti}. 
		
		Since $\br$ is not strongly $\blam$-admissible, there exists $k \in \{1, \dots , d\}$ so that 
		\[
			\sum_{i=1}^k r_i = \sum_{i=1}^k \lambda_i.
		\]
		The admissibility criterion requires that $\sum_{i=1}^N r_i = \sum_{i=1}^d \lambda_i$, so it follows that
		\begin{equation}\label{eq:reversed norm sum}
			\sum_{i=k+1}^N r_i = \sum_{i=k+1}^d \lambda_i.
		\end{equation}
		If $k=d$, then we see that $r_{k+1} = \dots = r_N = 0$, which does not satisfy the hypothesis that the $r_i$ are all positive. 
	
		Otherwise, suppose $F = \begin{bmatrix}f_1\mid \cdots \mid f_N\end{bmatrix} \in \frames_{\blam}(\br)$, so that $\blam$ is the spectrum of $FF^\ast$ and $\|f_i\|^2 = r_i$ for all $i=1, \dots , N$. Form a new frame $\widetilde{F} = \begin{bmatrix}f_N\mid \cdots \mid f_1\end{bmatrix}$ by reversing the order of the columns of $F$. If $\widetilde{\mu}_{\ell,j}$ are the eigensteps of $\widetilde{F}$, then we know that
		\[
			\sum_{j=1}^{\ell} \widetilde{\mu}_{\ell,j} = \sum_{j=1}^{\ell}r_{N+1-j} = \sum_{i=N+1-\ell}^N r_i
		\]
		for all $\ell=1,\dots, N-1$. In particular, letting $\ell=N-k$ yields
		\begin{equation}\label{eq:reversed eigenstep sum}
			\sum_{j=1}^{N-k} \widetilde{\mu}_{N-k,j} = \sum_{i=k+1}^N r_i = \sum_{i=k+1}^d \lambda_i
		\end{equation}
		using \eqref{eq:reversed norm sum}. 
	
		Now, consider the portion of the eigenstep inequality diagram~\eqref{eq:eigenstep inequalities} for $\widetilde{F}$ starting from the $(N-k)$th row:
		\[
			\begin{tikzcd}[cramped, sep=0]
				{\lambda_1} && {\lambda_2} && {\lambda_3} && \dots && {\lambda_d} \\
				& {\widetilde{\mu}_{N-1,1}} && {\widetilde{\mu}_{N-1,2}} && {\widetilde{\mu}_{N-1,3}} && \dots && {\widetilde{\mu}_{N-1,d}} \\
				&& {\widetilde{\mu}_{N-2,1}} && {\widetilde{\mu}_{N-2,2}} && {\widetilde{\mu}_{N-2,3}} && \dots && {\widetilde{\mu}_{N-2,d}} \\
				&&& \ddots && \ddots && \ddots && \ddots && \ddots \\
				&&&& {\widetilde{\mu}_{N-k,1}} && {\widetilde{\mu}_{N-k,2}} && {\widetilde{\mu}_{N-k,3}} && \dots && {\widetilde{\mu}_{N-k,d}} \\
			\end{tikzcd}
		\]
		Going diagonally up and to the right, we see that $\widetilde{\mu}_{N-k,i} \geq \lambda_{k+i}$ for all $i=1,\dots , d-k$. The only way this can be reconciled with \eqref{eq:reversed eigenstep sum} is if
		\[
			\widetilde{\mu}_{N-k,1} = \lambda_{k+1},\,\dots ,\, \widetilde{\mu}_{N-k,d-k} = \lambda_{d},
		\]
		and hence
		\[
			\widetilde{\mu}_{N-k,d-k+1} = \dots = \widetilde{\mu}_{N-k,d} = 0.
		\]
		But then $\widetilde{\mu}_{N-k,d} \geq \widetilde{\mu}_{d,d}$, so it follows that $\widetilde{\mu}_{d,d}=0$. Since this is an eigenvalue of the partial frame operator
		\[
			f_N f_N^\ast + \dots + f_{N-d+1} f_{N-d+1}^\ast, 
		\]
		the length-$d$ collection of frame vectors $f_{N-d+1}, \dots , f_N$ is linearly dependent, and hence $F$ is spark-deficient. Since the choice of $F$ was arbitrary, we see that $\frames_{\blam}(\br)$ consists entirely of spark-deficient frames.
	\end{proof}
	
	\item \emph{If $\br$ is strongly $\blam$-admissible, then full spark frames have full measure in $\frames_{\blam}(\br)$.}
	
    \begin{proof}
	It suffices to show that the set of full spark frames is full measure in the dense open subset of $\frames_{\blam}(\br)$ from \Cref{thm:toric_symplectic}, which we denote $\widetilde{\mathcal{U}}$. We will prove a subclaim: the set $\mathcal{D}^d_{\blam}(\br) \subset \widetilde{\mathcal{U}}$ of frames  whose first $d$ columns are linearly dependent is measure zero. The full claim then follows easily. Indeed, consider the action of the symmetric group on $N$ letters on the matrix space $\C^{d \times N}$ given by permuting columns. This representation of the symmetric group embeds it as a subgroup of $\operatorname{U}(N)$, so that the action on $\C^{d\times N}$ is by isometries. The action of a permutation $\sigma$ restricts to an isometry of each frame space $\frames_{\blam}(\br)$ onto its image  $\frames_{\blam}(\sigma \cdot \br)$, where $\sigma \cdot \br$ is the corresponding permutation of the norm vector. It follows that the set of spark-deficient frames is realized as $\cup_{\sigma} \mathcal{D}^d_{\blam}(\sigma \cdot \br)$, a finite union of measure zero sets. We proceed by establishing the subclaim.
	
	A frame $F = \begin{bmatrix}f_1\mid \cdots \mid f_N\end{bmatrix} \in \widetilde{\mathcal{U}}$ has linearly independent columns $f_1,\ldots,f_d$ if and only if the partial frame operator $S_d = f_1 f_1^\ast + \cdots + f_d f_d^\ast$ is full rank, which holds if and only if the smallest eigenvalue of $S_d$ is positive. Observe that these conditions are well-defined on the $(\operatorname{U}(d) \times G)$-equivalence class of $F$, denoted $[F]$. Moreover, the quotient map $\widetilde{\mathcal{U}} \to \mathcal{U} \subset \frames_{\blam} (\br)/(\operatorname{U}(d) \times G)$ is a Riemannian submersion (with respect to the real parts of the respective K\"{a}hler structures) onto its (dense, open) image.  It is therefore sufficient to establish the subclaim for the set $\mathcal{U}$.
	
	Let 
	\[
	\Phi =  \bigtimes_{\substack{1 \leq k \leq N-1 \\ 1 \leq j \leq \min\{d,k\}}} \Phi_{kj} : \mathcal{U} \to \mathcal{P}_{\blam}^{d,N}(\br)
	\]
	denote the moment map for the torus action on $\mathcal{U}$; that is, $\Phi$ is the eigenstep map. By the remarks in the previous paragraph, the condition that the first $d$ columns of a frame $F$ are linearly independent is equivalent to the statement that $\Phi_{dd}([F]) \neq 0$. 
	
	According to \Cref{sec:dimension_counting}, the moment polytope $\mathcal{P}_{\blam}^{d,N}(\br)$ is a convex subset of an affine subspace $\mathcal{A} \subset \R^{\frac{d(2N-d-1)}{2}}$. Thinking of the $\Phi_{kj}$ as coordinates on the ambient space, the condition $\Phi_{dd} = 0$ defines a hyperplane. The intersection of this hyperplane with $\mathcal{P}_{\blam}^{d,N}(\br)$ is positive codimension (hence measure zero), unless $\mathcal{A}$ is contained in the hyperplane. It is easy to see that this is not the case, as it suffices to show the existence of a pattern of the form \eqref{eq:eigenstep inequalities} satisfying the defining equalities of $\mathcal{A}$ such that $\mu_{dd} \neq 0$---such examples are trivial to construct without the polytope inequality constraints. 
	
	We have so far shown that image of the set of equivalence classes of frames whose first $d$ columns are linearly independent has full measure image in $\mathcal{P}_{\blam}^{d,N}(\br)$ under $\Phi$. By the Duistermaat--Heckman Theorem (\Cref{thm:duistermaat_heckman}), this implies that the set has full measure in $\mathcal{U}$. This completes the proof of the subclaim, and therefore completes the proof of the theorem.
	\end{proof}
	
\end{enumerate}

\subsection{An algebraic approach} 
\label{sub:algebraic approach}

The above proof is based on symplectic geometry, but, under additional rationality assumptions, there is also an argument using algebraic geometry that shows that the collection of full-spark frames is dense in $\frames_{\blam}(\br)$.

While we expect a similar argument to apply to arbitrary rational $\blam$ and $\br$, in which case the objects of interest are more general weight varieties~\cite{Anonymous:1996uu}, for simplicity we limit ourselves to the case when $\blam$ and $\br$ are constant, so we are talking about (scaled) unit-norm tight frames. It will be convenient to rescale so that $\br = (d, \dots , d)$, and hence $\blam = (N, \dots , N)$; note that both are integer vectors. 

As above, it suffices to show that the (equivalence classes of) full spark frames are open and dense in the symplectic reduction 
\[
	\frames_{\blam}(\br)/(\operatorname{U}(d) \times G) \approx \C^{d \times N} \sslash_{\mathcal{O}_{-\blam} \times \{\br\}} (\unitary \times G).
\]
Since $\blam$ is constant, the coadjoint orbit $\mathcal{O}_{-\blam}$ consists of the single point $-\blam \Id_d$. Taking the reduction in stages, the above quotient is isomorphic to
\[
	\left(\C^{d \times N} \sslash_{-\blam \Id_d} \unitary \right)\sslash_{\br} G= \Gr\sslash_{\br} G
\]
using \Cref{prop:diffeomorphic_to_flag} and \Cref{remark:flag}.

Work of Sjamaar~\cite{sjamaar_holomorphic_1995}, which is a culmination of a series of results by Kempf and Ness~\cite{kempf_length_1979}, Guillemin and Sternberg~\cite{guillemin_geometric_1982}, Kirwan~\cite{Kirwan:1984jt}, and Ness~\cite{ness_stratification_1984} relating symplectic reductions and Geometric Invariant Theory (GIT) quotients~\cite{mumford_geometric_1994}, has the following consequence in our case:

\begin{thm}\label{thm:Reduction and GIT}
	For arbitrary $\br$ consisting of positive integers, $\Gr\sslash_{\br} G$ is isomorphic, as a complex projective variety, to the GIT quotient
	\[
		\Gr\sslash_{\!\!\mathcal{L}_{\br}} H,
	\]
	where the line bundle $\mathcal{L}_{\br}$ on the Grassmannian is linearized to correspond to the action of the algebraic torus $H = \{(t_1, \dots , t_N) \in (\C^\ast)^N : \prod t_i = 1\}$ on $\C^N$ given by identifying $\btt=(t_1, \dots , t_N) \in H$ with the diagonal unitary matrix $\operatorname{diag}(\btt^{\br} t_1, \dots , \btt^{\br} t_N)$, where $\btt^{\br} = t_1^{r_1} \cdots t_N^{r_N}$ is the character of $H$ corresponding to the vector $\br$ (cf.~\cite{foth_toric_2005,Anonymous:2004vp}).
\end{thm}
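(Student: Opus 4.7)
The plan is to invoke the general principle, due to the sequence of works by Kempf--Ness, Guillemin--Sternberg, Kirwan, Ness, and Sjamaar as cited in the statement, that the symplectic reduction of a smooth polarized projective variety at an integral value of a moment map is canonically isomorphic, as a complex projective variety, to an associated GIT quotient. Specifically, for a Hamiltonian action of a compact group $K$ on $(X, \mathcal{L})$ with moment map $\Phi$ and integral value $\chi$, one has
\[
X \sslash_\chi K \;\cong\; X \sslash_{\!\mathcal{L}(\chi)} K^\C,
\]
where $\mathcal{L}(\chi)$ is obtained from $\mathcal{L}$ by twisting its $K^\C$-linearization by the character corresponding to $\chi$; see~\cite[Thm.~2.5]{sjamaar_holomorphic_1995}.

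Applying this requires two identifications. First, the compact torus $G \approx \torus^{N-1}$ from \Cref{sec:prescribed norms and spectrum} is the maximal compact subgroup of $H = \{\btt \in (\C^\ast)^N : \prod t_i = 1\}$, which is itself obtained from $(\C^\ast)^N$ by quotienting out the diagonal scalars exactly as $G$ was obtained from $\bt$. Since $(\C^\ast)^N$ acts algebraically on $\Gr$ via its standard diagonal action on $\C^N$, $H$ inherits an algebraic action that is the complexification of the $G$-action. Second, the Pl\"ucker embedding yields the ample generator $\mathcal{O}(1)$ of $\operatorname{Pic}(\Gr)$, with a natural $H$-linearization coming from the standard representation on $\C^N$. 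Twisting this linearization by the character $\btt \mapsto \btt^{\br}$ of $H$ produces $\mathcal{L}_{\br}$: under this twist, basis vectors of $\C^N$ transform by $\btt^{\br} t_i$, precisely the diagonal action described in the statement. By the shifting trick (see, e.g.,~\cite[Ch.~8]{mumford_geometric_1994}), the moment map for this twisted linearization equals the one from \Cref{prop:Hamiltonian torus action} minus the constant $\br$, so $\Phi^{-1}(\br)$ in the original normalization is identified with the zero level set in the twisted one.

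The main obstacle is essentially careful sign bookkeeping in the shifting trick---reconciling the convention under which the moment map of \Cref{prop:Hamiltonian torus action} records diagonal entries of $F^*F$ with the sign conventions in the Kempf--Ness stability criterion---but this is routine. Once the linearization $\mathcal{L}_{\br}$ is correctly identified as above, the isomorphism of complex projective varieties follows immediately from Sjamaar's theorem, the requisite hypotheses (smooth projective variety, algebraic action of a reductive complex group, and integrality of the moment-map level) being manifestly satisfied in our setting.
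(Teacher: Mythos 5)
Your proposal is correct and follows essentially the same approach the paper takes. The paper states \Cref{thm:Reduction and GIT} with no detailed proof, justifying it solely by citing the Kempf--Ness/Guillemin--Sternberg/Kirwan/Ness/Sjamaar line of results relating symplectic reduction at integral levels to GIT quotients; your proposal simply unpacks that citation---identifying $G$ as the maximal compact of $H$, $H$ as the complexification acting algebraically on $\Gr$, the linearization $\mathcal{L}_{\br}$ as the twist of the Pl\"ucker $\mathcal{O}(1)$ by the character $\btt \mapsto \btt^{\br}$, and the shifting trick translating the level $\br$ to the zero level---which is exactly the content the paper leaves implicit.
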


If $F \in \frames_{\blam}$ is a frame representing a point $[F] \in \Gr$, then the determinants of the $d \times d$ minors of $F$ are precisely the \emph{Pl\"ucker coordinates} of $[F]$, and in general the Pl\"ucker coordinates generate the homogeneous coordinate ring of $\Gr$. By definition, the full-spark frames are precisely those for which none of the Pl\"ucker coordinates vanish.

In turn, the homogeneous coordinate ring $\mathcal{R}$ of the GIT quotient $\Gr\sslash_{\!\!\mathcal{L}_{\br}} H$ consists of the $H$-invariant homogeneous coordinates on $\Gr$. It is known~\cite[Lemma 4.5]{Anonymous:2004vp} that $\mathcal{R}$ is spanned by monomials in the Pl\"ucker coordinates so that the total number of Pl\"ucker coordinates involving the $i$th column is $kr_i$ for some integer $k$ independent of $i$.

In our case, all $r_i = d$, so taking the product of all the Pl\"ucker coordinates and raising it to the $d$th power gives a homogeneous coordinate on $\Gr\sslash_{\!\!\mathcal{L}_{\br}} H$ whose vanishing set is exactly the collection of (equivalence classes of) spark-deficient frames in the space of (scaled) unit-norm tight frames. Since this is the vanishing set of a homogeneous coordinate, it is a subvariety, and hence its complement---the collection of full-spark frames---is open in the Zariski topology, and in particular either empty or dense. Since there are full-spark frames in each space of unit-norm tight frames (for example, the first $d$ rows of a scaled $N \times N$ discrete Fourier transform matrix~\cite{Alexeev:2012jk}), the collection of full-spark frames cannot be empty, so it must be dense.

\section{Discussion}\label{sec:discussion}

Given a $2n$-dimensional toric symplectic manifold $M$ with moment polytope $P$, one can often find \emph{action-angle coordinates} on $M$ which take the form of a map $\alpha: \operatorname{int}(P) \times U(1)^n \to M$ which inverts the momentum map $\Phi: M \to P$ in the sense that $\Phi(\alpha(p,t)) = p$. In this case, \Cref{thm:duistermaat_heckman} can be extended slightly to show that the image of $\alpha$ is a full-measure subset of $M$ and that the map $\alpha$ is measure-preserving. Sampling $P \times U(1)^n$ with respect to the product of Lebesgue measure on $P$ and the standard product measure on $U(1)^n$ and pushing forward by $\alpha$ gives a uniformly random sample from the symplectic measure on $M$ (see, for example, the discussion in~\cite{Cantarella:2016iy}).

In our setting, this means that coupling explicit action-angle coordinates on $\frames_{\blam}(\br)/(\mathrm{U}(d) \times G)$ with an algorithm for sampling $\polytope_{\blam}(\br)$ would give an algorithm for sampling random frames in $\frames_{\blam}(\br)$. In particular, by \Cref{cor:FUNTF}, such an algorithm would provide endless quantities of full spark FUNTFs.

It is natural to ask whether the analog of \Cref{thm:main} holds for real and for quaternionic frames. Symplectic geometry is not the right tool in either case, but it is very plausible that the algebraic proof sketched in \Cref{sub:algebraic approach} could be adapted to the real case to show that full spark frames are dense in real frame spaces. In a different direction, the perspective based on isotropy orbits and isoparametric submanifolds introduced in~\cite{NSquaternionicframes} seems like the most promising way to understand the measures on real and quaternionic frame spaces.

Finally, the spaces $\frames_{\blam}(\br)/(\unitary \times G)$ are examples of \emph{weight varieties}~\cite{Anonymous:1996uu}, and Goldin~\cite{Goldin:2001ba} has determined the rational cohomology ring of certain weight varieties, including the quotient $\frames_{(\lambda, \dots , \lambda)}(\br)/(\unitary \times G)$ of the space of $\lambda$-tight frames with fixed frame vector norms whenever it is a manifold. In particular, this determines the rational cohomology ring of the $(\unitary \times G)$-quotient of FUNTF space when $N$ and $d$ are relatively prime. What about in the non-manifold case or for more general frame spectra?

\subsection*{Acknowledgments}

We are very grateful for ongoing conversations about frames with various friends and colleagues, especially Jason Cantarella, Khazhgali Kozhasov, Emily King, Chris Manon, Augustin-Liviu Mare, Dustin Mixon, Louis Scharf, and Soledad Villar, and we thank the anonymous Referee \#1 for our earlier paper~\cite{NeedhamSGC}, who pushed us to explore what the symplectic machinery could say about genericity of full spark frames. This work was supported by grants from the National Science Foundation (DMS--2107808, Tom Needham; DMS--2107700, Clayton Shonkwiler) and the Simons Foundation (\#709150, Clayton Shonkwiler).

\end{document}